\newcommand{\NN}{\mathbb{N}}
\newcommand{\RR}{\mathbb{R}}
\newcommand{\CC}{\mathbb{C}}
\newcommand{\ZZ}{\mathbb{Z}}
\newcommand{\norm}[1]{\lVert#1\rVert}
\newcommand{\abs}[1]{\lvert#1\rvert}
\newtheorem{theorem}{Theorem}[section]
\newtheorem{corollary}[theorem]{Corollary}
\newtheorem{lemma}[theorem]{Lemma}
\newtheorem{proposition}[theorem]{Proposition}
\newcommand{\comment}[1]{}
\theoremstyle{definition}
\newtheorem{definition}[theorem]{Definition}
\newtheorem{remark}[theorem]{Remark}
\newtheorem{case}{Case}
\numberwithin{equation}{section}
\DeclareMathOperator{\diag}{diag}
\renewcommand*\env@matrix[1][*\c@MaxMatrixCols c]{%
  \hskip -\arraycolsep
  \let\@ifnextchar\new@ifnextchar
  \array{#1}}
\begin{document}
\title[Cohomological equation and cocycle rigidity ]
{Cohomological equation and cocycle rigidity of parabolic actions in $SL(n,\RR)$}


\thanks{{\em Key words and phrases:} Higher rank Abelian group actions, cocycle rigidity, induced unitary representation, Mackey theory}

\author[]{ Zhenqi Jenny Wang }
\address{Department of Mathematics\\
        Yale University\\
        New Haven, CT 06520,   USA}
\email{Zhenqi.Wang@yale.edu}

\begin{abstract}For any unitary representation $(\pi,\mathcal{H})$ of $G=SL(n,\RR)$, $n\geq 3$ without non-trivial $G$-invariant vectors,
we study smooth solutions of the cohomological
equation $\mathfrak{u}f=g$ where $\mathfrak{u}$ is a vector in the root space of $\mathfrak{sl}(n,\RR)$ and
$g$ is a given vector in $\mathcal{H}$. We characterize the obstructions to solving the cohomological
equation, construct
smooth solutions of the cohomological equation and obtain tame Sobolev estimates for $f$.

We also study common solutions to (the infinitesimal version of) the cocycle equation $\mathfrak{u}h=\mathfrak{v}g$, where
$\mathfrak{u}$ and $\mathfrak{v}$ are commutative vectors in different root spaces of $\mathfrak{sl}(n,\RR)$ and $g$ and $h$ are given vectors in $\mathcal{H}$.
We give precisely the condition under which the cocycle equation has common solutions: $(*)$ if $\mathfrak{u}$ and $\mathfrak{v}$ embed in $\mathfrak{sl}(2,\RR)\times \RR$, then the common solution exists. Otherwise, we show counter examples in each
$SL(n,\RR)$, $n\geq 3$. As an application, we obtain smooth cocycle rigidity for higher rank parabolic actions over $SL(n,\RR)/\Gamma$, $n\geq 4$ if the Lie algebra of the acting parabolic subgroup contains a pair $\mathfrak{u}$ and $\mathfrak{v}$ satisfying property $(*)$ and prove that the cocycle rigidity fails otherwise. Especially, the
cocycle rigidity always fails for $SL(3,\RR)$.

The main new ingredient in the proof is making use of unitary duals of various subgroup in $SL(n,\RR)$ isomorphic to $SL(2,\RR)\ltimes\RR^2$ or $(SL(2,\RR)\ltimes\RR^2)\ltimes\RR^3$ obtained by Mackey theory.
\end{abstract}

\maketitle

\section{Introduction}
\subsection{Various algebraic actions} We define $\ZZ^k\times \RR^\ell$, $k+\ell\geq 1$ algebraic actions as follows. Let $G$ be a
connected Lie group, $A\subseteq G$ a closed abelian subgroup which
is isomorphic to $\ZZ^k\times \RR^\ell$, $M$ a compact subgroup of
the centralizer $Z(A)$ of $A$, and $\Gamma$ a cocompact torsion free lattice in
$G$. Then $A$ acts by left translation on the compact space
$\mathcal{M}=M\backslash G/\Gamma$. The three specific
types of examples discussed below correspond to:
\begin{itemize}
  \item  for the symmetric space examples take $G$ a semisimple Lie group of the non-compact
type and $A$ a subgroup of a maximal $\RR$-split Cartan subgroup in $G$

\medskip
  \item  for the twisted symmetric space examples take $G=H\ltimes _\rho\RR^m$ or $G=H\ltimes_\rho N$, a semidirect
product of a reductive Lie group $H$ with semisimple factor of the non-compact
type with $\RR^m$ or a simply connected nilpotent group $N$. In this case, $A$ is a subgroup of a maximal $\RR$-split Cartan subgroup in $H$

\medskip
  \item for the parabolic action examples, take $G$ a semisimple Lie group of the non-compact
type and $A$ a subgroup of a maximal abelian unipotent subgroup in $G$
\end{itemize}
In the past two decades various rigidity phenomena for (partially) hyperbolic actions have been well understood. Significant progresses have been made in
the case of cocycle rigidity of (partially) hyperbolic algebraic actions (see \cite{Damjanovic1}, \cite{Kononenko}, \cite{Spatzier1}, \cite{Spatzier2} and \cite{Zhenqi}) for symmetric and twisted symmetric space examples. This is in contrast to the rank-one situation, where Livsic showed that
there is an infinite-dimensional space of obstructions to solving the cohomology
equation for a hyperbolic action by $\RR$ or $\ZZ$. In the higher rank cases, it was showed in above mentioned papers that smooth cocycles over algebraic Anosov (partially hyperbolic) abelian actions are cohomologically constant via smooth transfer
functions. The key ingredient in proofs of \cite{Spatzier1}, \cite{Spatzier2} and \cite{Zhenqi} is the exponential decay of matrix coefficients for the split Cartan action and the main observation in \cite{Damjanovic1}, \cite{Kononenko} is the exponential decay rate along stable and unstable foliations of periodic cycle functionals.
For all these results, the stable and
unstable foliations of the space play a central role.

The classical horocycle flow is the flow on $PSL(2,\RR)/\Gamma$ given by left translation of the one parameter subgroup generated by $\mathcal{U}=\begin{pmatrix}0 & t \\
0 & 0 \end{pmatrix}\in \mathfrak{sl}(2,\RR)$. Horocycle flow, or more generally algebraic parabolic action, possesses very different dynamical behavior with complete
absence of hyperbolicity. In contrast to the hyperbolic cases mentioned above, for parabolic actions most orbits grow ``polynomially"
and matrix coefficients decay  ``polynomially". To handle this  problem, G. Flaminio and L. Forni used representation theory as an essential tool in \cite{Forni} to study the cohomological equation. They characterized the obstructions to solving the cohomological
equation for classical horocycle flows on quotients of $PSL(2,\RR)$ and showed that the space of obstructions to the equation $\mathcal{U}g=f$ (where $f,\,g$ are in a
unitary representation space of $PSL(2,\RR)$ with a spectral
gap) is of infinite countable dimension; and if $f$ is a smooth vector, then $g$ is a smooth vector. (In fact, G. Flaminio and L. Forni showed that there is finite loss of regularity (of Sobolev norms) between $f$ and $g$.)

The approach of \cite{Forni} was further employed
in \cite{Mieczkowski}, \cite{Mieczkowski1} and \cite{Ramirez} to obtain smooth
cocycle rigidity for some models of higher rank parabolic actions. In \cite{Mieczkowski} and \cite{Mieczkowski1}
Mieczkowski considered actions by subgroup
\begin{align*}
U_1=\bigl\{\begin{pmatrix}1 & c \\
0 & 1 \end{pmatrix}\bigl|c\in\CC\bigl\}\qquad\text{ on }SL(2,\CC)/\Gamma
\end{align*}
and by subgroup
\begin{align*}
U_2=\bigl\{\begin{pmatrix}1 & r \\
0 & 1 \end{pmatrix}\times \begin{pmatrix}1 & s \\
0 & 1 \end{pmatrix}\bigl|r,\,s\in\RR\bigl\}\qquad \text{ on }SL(2,\RR)\times SL(2,\RR)/\Gamma.
\end{align*}
Again, the solutions to the
cocycle equation come with some loss of Sobolev order. In \cite{Ramirez} Ramirez
replaced $SL(2,\RR)\times SL(2,\RR)$ with any noncompact simple Lie group with finite center. But this comes with a price:
smoothness of transfer functions follows from the general elliptic PDE result \cite{Spatzier2}, which means that the solution of the cohomological equation  loses at least half of regularity. Along lines similar to the proof given in \cite{Forni}, the results of Mieczkowski and Ramirez are achieved in every non-trivial irreducible component of unitary representation of $SL(2,\RR)\times SL(2,\RR)$ and $SL(2,\CC)$.

The natural difficulty in studying cohomological equations and obtaining cocycle rigidity in higher rank simple (semisimple) Lie groups is related to the complexity of the representation theory tool.
In particular, in above mentioned cases, the solution to the cohomological equation was established when representations of the group satisfy some special properties (there exists an orthogonal basis in each non-trivial irreducible component of $SL(2,\RR)$ or $SL(2,\RR)\times SL(2,\RR)$), or when the unitary dual of the group is not hard to deal with (for the case of $SL(2,\CC)$). In general, the unitary dual of many higher rank almost-simple algebraic
groups is not completely classified. Even when the classification is known, it is too complicated to apply. For example, the method of \cite{Forni} fails when the group is $SL(3,\RR)$ even though the unitary dual of $SL(3,\RR)$ is available from
the literature \cite{Vogen}.

In this paper we characterize the obstructions to solving the cohomological
equation, construct
smooth solutions of the cohomological equation and obtain tame Sobolev estimates for the solution, i.e, there is finite loss of regularity (with respect to Sobolev norms) between the coboundary and the solution.
We also give a precise description of the condition under which the cocycle equation has common solutions.
As an application, we prove the smooth cocycle rigidity for higher rank parabolic actions over $SL(n,\RR)$, $n\geq 4$.
To prove these results we introduce new ingredients in application of representation theory to higher rank simple Lie groups: studying unitary dual of
subgroups $SL(2,\RR)\ltimes\RR^2$ and $(SL(2,\RR)\ltimes\RR^2)\ltimes\RR^3$ in $SL(n,\RR)$ instead of that of $SL(n,\RR)$ itself. We use Mackey theory to study these representations and carry out explicit calculations in each irreducible component that may appear in restricted non-trivial representation of $SL(n,\RR)$. The global property of the solution comes from the fact that there are enough many semidirect product groups containing the one-parameter root subgroup that determines the cohomological equation.
These results are of independent interest and have wide applicability.

Though it is the first time that the semidirect product group plays
central role in studying cohomological equations and rigidity phenomena in dynamical systems, it has many important applications in other
area of mathematics. The pair $SL(2,\RR)\ltimes\RR^2$ has relative Kazhdan's property $(T)$.  One of the first application of this property
was the resolution of the
Ruziewicz problem for $\RR^n$ when $n\geq 3$ which is due to G. A. Margulis (see \cite{Margulis3}). R. Howe on the other
hand used the property $(T)$ of the pair $SL(2,\RR)\ltimes\RR^2$ to show the Kazhdan's property $(T)$ of $SL(n,\RR)$, $n\geq 3$ (see \cite{tan}).
The semidirect products play also an important
role in the paper of Hee oh \cite{oh} where she gave an explicit calculation of Kazhdan's constants and obtained sharper upperbound for matrix coefficients.

\section{Background, definition, and statement of results}
\subsection{Preliminaries on cocycles} Let $\alpha:A\times
\mathcal{M}\rightarrow \mathcal{M}$ be an action of a topological group $A$ on a (compact)
manifold $\mathcal{M}$ by diffeomorphisms. For a topological group
$Y$ a $Y$-valued {\em cocycle} (or {\em an one-cocycle}) over
$\alpha$ is a continuous function $\beta : A\times \mathcal{M}\rightarrow Y$
satisfying:
\begin{align}
\beta(ab, x) = \beta(a, \alpha(b, x))\beta(b, x)
\end{align}
 for any $a, b \in A$. A
homomorphism $s: A\rightarrow Y$ satisfies the cocycle identity by setting $s(a, x)=s(a)$,
and is called a \emph{constant cocycle}. A cocycle is
{\em cohomologous to a constant cocycle} if there exists a homomorphism $s : A\rightarrow Y$ and a
continuous transfer map $H : \mathcal{M}\rightarrow Y$ such that for all $a
\in A $
\begin{align}\label{for:6}
 \beta(a, x) = H(\alpha(a, x))s(a)H(x)^{-1}
\end{align}
\eqref{for:6} is called the cohomology equation.

In particular, a cocycle is a {\em coboundary} if it is cohomologous
to the trivial cocycle $\pi(a) = id_Y$, $a \in A$, i.e. if for all
$a \in A$ the following equation holds:
\begin{align}
 \beta(a, x) = H(\alpha(a, x))H(x)^{-1}.
\end{align}
For more detailed information on cocycles adapted to the present setting
see \cite{Damjanovic1} and \cite{Katok}. 

This paper will be only concerned  with smooth $\CC^k$-valued cocycles over
algebraic parabolic actions on smooth manifolds. By taking component functions we may always assume that $\beta$ is valued on $\CC$. Further, by taking real and imaginary parts, we can extend the results for real valued cocycles as well.
Specifically, $A$ is a subgroup of a maximal abelian unipotent subgroup in $G=SL(n,\RR)$, $n\geq 3$ and the space $X=G/\Gamma$, where $\Gamma\subset G$ is a torsion free lattice. A cocycle is called smooth if the map $\beta:\,A\rightarrow C^\infty(L^2(G/\Gamma))$ is smooth. We can also define $\beta$ to be of class $C^r$.  We
also note that if the cocycle $\beta$ is cohomologous to a constant cocycle,
then the constant cocycle is given by $s(a)=\int_{G/\Gamma}\beta(a,x)dx.$

For a $C^1$ cocycle $\beta$,  we can define the \emph{infinitesimal generator} of the cocycle $\beta$ by
\begin{align*}
\omega(\nu)=\frac{d}{dt}\beta(\exp t\nu)\bigl|_{t=0}
\end{align*}
The cocycle identity and abelianess of $A$ imply that $\omega$ is a closed $1$-form on
the $A$-orbits in $X$. We can also recover $\beta$ from $\omega$ by
\begin{align*}
    \beta(\exp X)=\int_{0}^1\omega(X)\cdot \exp tXdt
\end{align*}
Thus, we can restrict our attention to infinitesimal version of the cohomology equation
$\omega=\eta-dH$, where $\eta$ is another infinitesimal generator of a smooth cocycle and $H$ is the transfer function. Therefore 
a cocycle $\beta$ is cohomologically trivial if the associated $1$-form $\omega$ is exact and the problem of finding which
cocycle is cohomologically trivial boils down to the problem of determing
which closed $1$-form on the orbit foliation is exact. In fact, this point of view is
the most useful for our purposes.

In what follows, $C$ will denote any constant that depends only
  on the given  group $G$. $C_{x,y,z,\cdots}$ will denote any constant that in addition to the
above depends also on parameters $x, y, z,\cdots$.

\subsection{Statement of the results}\label{sec:25} In $\mathfrak{sl}(n,\RR)$, let $\mathfrak{u}_{i,j}$, $i\neq j$ be the elementary $n\times n$ matrix with only one nonzero entry
equal to one, namely, that in the row $i$ and the column $j$ and let $U_{i,j}$ be the one-parameter subgroup generated by $\mathfrak{u}_{i,j}$. For $1\leq i\neq j\leq n$, set
\begin{align*}
E_{i,j}&=\{\mathfrak{u}_{k,\ell}: \mathfrak{u}_{k,\ell}\neq \mathfrak{u}_{i,j},\, [\mathfrak{u}_{k,\ell},\,\mathfrak{u}_{i,j}]=0\text{ and }[\mathfrak{u}_{k,\ell},\,\mathfrak{u}_{j,i}]\neq 0\}\quad\text{ and}\\
\bar{E}_{i,j}&=\{\mathfrak{u}_{k,\ell}: [\mathfrak{u}_{k,\ell},\,\mathfrak{u}_{i,j}]=0\text{ and }[\mathfrak{u}_{k,\ell},\,\mathfrak{u}_{j,i}]= 0\}.
\end{align*}
In fact, $\bar{E}_{i,j}$ consists of all $\mathfrak{u}_{k,\ell}$ such that the subgroups $U_{k,\ell}\times U_{i,j}$ imbed in subgroups of $SL(n,\RR)$ isomorphic to $SL(2,\RR)\times \RR$.

Suppose $(\pi,\,\mathcal{H})$ is a unitary representation  of $G=SL(n,\RR)$, $n\geq 4$ without non-trivial $G$-fixed vectors. Since $U_{i,j}$ is a closed subgroup of $G$, we have a direct integral decomposition
\begin{align*}
    \pi\mid_{\exp(t\mathfrak{u}_{i,j})}=\int_{\widehat{\RR}}\chi(t)du(\chi),\qquad \forall i\neq j.
\end{align*}
where $u$ is a regular Borel measure and
\begin{align*}
    v=\int_{\widehat{\RR}}v_{i,j,\chi}du(\chi),\qquad \forall\, v\in \mathcal{H}.
\end{align*}
Define $D_{i,j}(v,u)(\chi):=\norm{v_{i,j,\chi}}$. If $u$ is the Lebesgue measure, we just write $D_{i,j}(v)(\chi)$.

Our first two results characterize the obstructions to solving the cohomological
equation and obtain Sobolev estimates for the solution. The next theorem  shows that the $U_{i,j}$-invariant distributions are the only obstructions to solving
the cohomology equation for a given vector $g\in \mathcal{H}^\infty$. This result is similar to the rank one cases (see \cite{Forni} and \cite{Mieczkowski}).
\begin{theorem}\label{th:6}
For any unitary representation $(\pi,\,\mathcal{H})$ of $G=SL(n,\RR)$, $n\geq 3$ without non-trivial $G$-fixed vectors and all $g\in \mathcal{H}^\infty$,
\begin{enumerate}
  \item \label{for:33}if the cohomological equation $\mathfrak{u}_{i,j}f=g$, has a solution $f\in \mathcal{H}$, then $f\in \mathcal{H}$ and satisfies
the Sobolev estimate
\begin{align*}
    \norm{f}_s\leq C_s\norm{g}_{s+7}\qquad \forall s\geq 0,
\end{align*}
  \item \label{for:35}if $\mathcal{D}(g)=0$ for all $U_{i,j}$-invariant distributions, then the cohomological equation $\mathfrak{u}_{i,j}f=g$, has a solution $f\in \mathcal{H}^\infty$.

\end{enumerate}
\end{theorem}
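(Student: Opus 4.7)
The plan is to restrict $\pi$ to a well-chosen subgroup $H\subset G$ isomorphic to $SL(2,\RR)\ltimes\RR^2$ in which $\mathfrak{u}_{i,j}$ lies inside the normal abelian factor, and to solve the cohomological equation component-by-component in the Mackey decomposition of $\pi|_H$. Concretely, for $\mathfrak{u}_{1,2}$ one may take $H$ to be generated by the $\mathfrak{sl}(2,\RR)$-triple $\{\mathfrak{u}_{2,3},\mathfrak{u}_{3,2},\mathfrak{u}_{2,2}-\mathfrak{u}_{3,3}\}$ acting by its defining representation on the abelian ideal $\RR\mathfrak{u}_{1,2}+\RR\mathfrak{u}_{1,3}$; analogous choices are available for every $(i,j)$ when $n\geq 3$. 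The point of this choice is that $\mathfrak{u}_{i,j}$ becomes a translation in the normal $\RR^2$, where its action can be diagonalized by Fourier analysis on $\widehat{\RR^2}$.

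Next, I apply Mackey theory to $H$. The $SL(2,\RR)$-action on $\widehat{\RR^2}\cong\RR^2$ has exactly two orbits, $\{0\}$ and $\RR^2\setminus\{0\}$, so $\pi|_H=\pi_0\oplus\pi_1$, where $\pi_0$ is the $\RR^2$-fixed part (factoring through $SL(2,\RR)$) and $\pi_1$ is a direct integral of irreducibles induced from characters of the stabilizer $\RR\ltimes\RR^2$ of a nonzero covector. On each component of $\pi_1$, realized on an $L^2$-space over the orbit $\RR^2\setminus\{0\}$, the element $\mathfrak{u}_{i,j}$ acts as multiplication by $i\xi_1$, so that away from the singular locus $\{\xi_1=0\}$ the equation is solved explicitly by $\widehat f(\xi)=\widehat g(\xi)/(i\xi_1)$. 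The remaining task is to control Sobolev norms of this singular quotient and to identify exactly when the quotient yields a genuine smooth element of $\mathcal{H}$.

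The core idea is to exploit the transversal action of $SL(2,\RR)\subset H$, which rotates $\xi$ and thus moves $\{\xi_1=0\}$ to any other line through the origin. This lets one trade regularity of $g$ in the transversal directions $\mathfrak{u}_{2,3},\mathfrak{u}_{3,2},\mathfrak{u}_{2,2}-\mathfrak{u}_{3,3}$ for integrability of $\widehat g(\xi)/\xi_1$ near $\xi_1=0$, and a careful interpolation yields the tame bound with a fixed finite Sobolev loss; the explicit value $7$ arises from bookkeeping of how many transversal derivatives are invoked. For part \eqref{for:35}, the same construction produces a smooth $f$ provided $\widehat g(\xi)/\xi_1$ is integrable up to the face $\{\xi_1=0\}$, which is exactly the hypothesis $\mathcal{D}(g)=0$ for every $U_{i,j}$-invariant distribution: in the induced model the invariant distributions are precisely those supported on $\{\xi_1=0\}$, while the $\pi_0$-component consists entirely of $U_{i,j}$-invariant vectors and is killed by the same hypothesis. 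For part \eqref{for:33}, one bootstraps regularity from the assumed $L^2$ solution by applying $X\in\mathfrak{sl}(n,\RR)$ to $\mathfrak{u}_{i,j}f=g$ and using the commutators $[X,\mathfrak{u}_{i,j}]$ together with the estimate of part \eqref{for:35} to control $\|f\|_s$ iteratively.

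The main obstacle is the sharp Sobolev estimate near the singular locus $\{\xi_1=0\}$: a naive inversion loses an unbounded number of derivatives, and obtaining a uniform finite loss (independent of $s$) requires combining the explicit induced realization of each Mackey irreducible with the transversal $SL(2,\RR)$-action in a coordinated way. This is the technical heart of the argument and is precisely why the Mackey-theoretic study of the subgroups $SL(2,\RR)\ltimes\RR^2$, rather than of $SL(n,\RR)$ itself, is the essential new ingredient.
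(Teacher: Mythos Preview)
Your plan diverges from the paper at the very first step. You place $\mathfrak{u}_{i,j}$ in the normal $\RR^2$ factor of $H\cong SL(2,\RR)\ltimes\RR^2$, so that the cohomological equation becomes division by a Fourier coordinate; the paper instead places $\mathfrak{u}_{i,j}$ inside the $SL(2,\RR)$ factor of the subgroups $G_m,H_m$ (Definition~\ref{de:2}), so that the equation is the Flaminio--Forni horocycle equation $Vf=g$. For part~\eqref{for:35} the paper's proof is then nearly immediate: Howe--Moore together with Proposition~\ref{cor:2} force a uniform spectral gap for $\pi|_{SL(2,\RR)}$, and Proposition~\ref{le:6} (Flaminio--Forni plus the centralizer trick) produces the solution directly from the invariant-distribution hypothesis. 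For part~\eqref{for:33}, Corollary~\ref{cor:3} (built on the long Theorem~\ref{po:2}) upgrades an $L^2$ solution of $Vf=g$ in each $G_m,H_m$ to a smooth vector with tame loss; Lemma~\ref{le:11} then bootstraps across this whole family, whose Lie algebras together span $\mathfrak{sl}(n,\RR)$, and the global estimate follows by elliptic regularity.

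Your route has a genuine gap at part~\eqref{for:33}. The bootstrap you describe---apply $X$ to the equation, use $[X,\mathfrak{u}_{i,j}]$, iterate---breaks exactly at your own transversal $SL(2,\RR)$ directions: for instance $[\mathfrak{u}_{2,3},\mathfrak{u}_{1,2}]=-\mathfrak{u}_{1,3}\notin\RR\mathfrak{u}_{1,2}$, so differentiating along $\mathfrak{u}_{2,3}$ puts $\mathfrak{u}_{1,3}f$ on the right-hand side, a term over which the bare hypothesis $f\in\mathcal{H}$ gives you no control. Thus the centralizer/normalizer trick by itself does not reach the full $H$-Sobolev scale in your model. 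The sentence ``a careful interpolation yields the tame bound with a fixed finite Sobolev loss'' is precisely the content that must be proved, not deferred; in the paper's setup that content is the entirety of Theorem~\ref{po:2} and Lemma~\ref{le:8}, and an analogue for your multiplication-operator picture (including the behaviour near $\xi_1=\xi_2=0$, where your ``$SL(2,\RR)$ rotates the singular line'' heuristic degenerates) would have to be developed from scratch. It may be feasible, but nothing in your sketch indicates how.
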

It turns out that in higher rank cases, we have a more concrete way to describe the obstructions: they are exactly the spectral space of the one-parameter subgroup $U_{i,j}$ at $0$. Using above nations, we prove:
\begin{theorem}\label{th:9}
\begin{enumerate}
\item \label{for:94} $u$ is absolutely continuous with respect to the Lebesgue measure $d\chi$. Then we can assume
\begin{align*}
    \pi\mid_{\exp(t\mathfrak{u}_{i,j})}=\int_{\widehat{\RR}}\chi(t)d\chi.
\end{align*}

\noindent Further, for any $g\in \mathcal{H}^2$

  \item \label{for:95}if $\mathfrak{u}_{k,\ell}\in E_{i,j}$, then $D_{i,j}(\mathfrak{u}_{k,\ell}^2g)(\chi)$ is almost a continuous function on $\widehat{\RR}$, that is, there exists a continuous function $v$ on $\widehat{\RR}$ such that
  $D_{i,j}(\mathfrak{u}_{k,\ell}^2g)(\chi)=v(\chi)$ for almost all $\chi\in\widehat{\RR}$,
  \medskip
  \item \label{for:96}if $g\in \mathcal{H}^\infty$ and the cohomological equation $\mathfrak{u}_{i,j}f=g$, has a solution $f\in \mathcal{H}$, then
  \begin{align*}
    \lim_{\chi\rightarrow 0}D_{i,j}(\mathfrak{u}_{k,\ell}^2g)(\chi)=0
  \end{align*}
for any $\mathfrak{u}_{k,\ell}\in E_{i,j}$,
\medskip
  \item \label{for:97}if $g\in \mathcal{H}^\infty$ and there exists a pair $\mathfrak{u}_{m,n}$ and $\mathfrak{u}_{m_1,n_1}$ in $E_{i,j}$ with $\mathfrak{u}_{m,n}\in \bar{E}_{m_1,n_1}$ such that
  \begin{align*}
    \lim_{\chi\rightarrow 0}D_{i,j}(\mathfrak{u}_{m,n}^2g)(\chi)=0\quad\text{ and }\quad \lim_{\chi\rightarrow 0}D_{i,j}(\mathfrak{u}_{m_1,n_1}^2g)(\chi)=0,
  \end{align*}
   then the cohomological equation $\mathfrak{u}_{i,j}f=g$, has a solution $f\in \mathcal{H}$.
\end{enumerate}

\end{theorem}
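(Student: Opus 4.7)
The plan for all four parts rests on realizing $U_{i,j}$ as a one-parameter subgroup of Mackey-type semidirect products inside $SL(n,\RR)$. Whenever $\mathfrak{u}_{k,\ell}\in E_{i,j}$, the elements $\mathfrak{u}_{i,j}$ and $\mathfrak{u}_{k,\ell}$ together with a suitable $\mathfrak{sl}(2,\RR)$-triple generate a subgroup $H\cong SL(2,\RR)\ltimes\RR^2$ in which $U_{i,j}$ and $U_{k,\ell}$ lie in the $\RR^2$-radical and are interchanged (up to sign) by the Weyl element of the $SL(2,\RR)$-factor. The Mackey machine classifies the irreducible unitary reps of $H$ through the $SL(2,\RR)$-orbits on $\widehat{\RR^2}$: only the trivial orbit $\{0\}$ and the single dense orbit $\RR^2\setminus\{0\}$ appear. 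For part~\eqref{for:97}, the commutativity $\mathfrak{u}_{m,n}\in\bar{E}_{m_1,n_1}$ together with $\mathfrak{u}_{m,n},\mathfrak{u}_{m_1,n_1}\in E_{i,j}$ will let me attach a second $\mathfrak{sl}(2,\RR)$-triple commuting with $\mathfrak{u}_{m,n}$, producing the larger group $H'\cong(SL(2,\RR)\ltimes\RR^2)\ltimes\RR^3$ to which a two-step Mackey analysis applies.

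For part~\eqref{for:94}, I would decompose $\pi|_H$ via the Mackey machine: any component inflated from the trivial orbit would produce $U_{i,j}$-fixed vectors in $\pi$, and since $U_{i,j}$ is non-compact and $\pi$ has no $G$-fixed vectors, Howe--Moore applied inside the corresponding $SL(2,\RR)$-copy forbids this. Thus $\pi|_H$ is a direct integral of dense-orbit irreps; in each such irrep the joint $\RR^2$-spectrum is an absolutely continuous measure on the orbit, its projection to the $U_{i,j}$-axis is absolutely continuous, and this passes to the ambient direct integral. For part~\eqref{for:95}, realize a dense-orbit irrep on $L^2(\RR,d\chi)$ by choosing a section of the orbit adapted to the $U_{i,j}$-axis: then $\mathfrak{u}_{i,j}$ becomes multiplication by $2\pi i\chi$, $\mathfrak{u}_{k,\ell}$ becomes multiplication by a smooth function of $\chi$, and $\mathfrak{u}_{j,i}$ becomes a first-order differential operator in $\chi$. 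For $g\in\mathcal{H}^2$ the fiber vector $(\mathfrak{u}_{k,\ell}^2 g)_{i,j,\cdot}$ therefore lies in the Hilbert-space-valued Sobolev space $H^1(\RR)$, and the one-dimensional embedding $H^1\hookrightarrow C^0$ furnishes a continuous representative of the scalar norm $D_{i,j}(\mathfrak{u}_{k,\ell}^2 g)(\chi)$; continuity is preserved under the outer Mackey direct integral.

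Part~\eqref{for:96} is then a direct Fourier calculation. Applying $\mathfrak{u}_{k,\ell}^2$ to both sides of $\mathfrak{u}_{i,j}f=g$ and using $[\mathfrak{u}_{i,j},\mathfrak{u}_{k,\ell}]=0$ gives $\mathfrak{u}_{i,j}(\mathfrak{u}_{k,\ell}^2 f)=\mathfrak{u}_{k,\ell}^2 g$, i.e.,
\[
2\pi i\chi\,(\mathfrak{u}_{k,\ell}^2 f)_{i,j,\chi}=(\mathfrak{u}_{k,\ell}^2 g)_{i,j,\chi}.
\]
Since $\mathfrak{u}_{k,\ell}^2 f\in\mathcal{H}$, taking norms yields $D_{i,j}(\mathfrak{u}_{k,\ell}^2 g)(\chi)/\abs{\chi}\in L^2(d\chi)$; combined with the continuity from part~\eqref{for:95}, this forces $D_{i,j}(\mathfrak{u}_{k,\ell}^2 g)(0)=0$, and continuity delivers the asserted limit.

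Part~\eqref{for:97} is the main obstacle and the reason $H'$ is needed. My plan is to show that the two vanishing conditions annihilate every $U_{i,j}$-invariant distribution on $g$, and then invoke Theorem~\ref{th:6}~\eqref{for:35} to extract $f\in\mathcal{H}^\infty$. A two-step Mackey decomposition of $\pi|_{H'}$---the inner one along the $\mathfrak{sl}(2,\RR)$-triple around $\mathfrak{u}_{m_1,n_1}$ acting on the $\RR^2$ carrying $\mathfrak{u}_{i,j}$ and $\mathfrak{u}_{m_1,n_1}$, and the outer one along the commuting direction carrying $\mathfrak{u}_{m,n}$---parametrizes the $U_{i,j}$-spectral fiber at $\chi=0$ in two adapted ways. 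The first hypothesis will control this fiber in one direction of the larger Mackey orbit, the second in the complementary direction; commutativity of $\mathfrak{u}_{m,n}$ with the entire $\mathfrak{sl}(2,\RR)_{m_1,n_1}$-triple (encoded by $\mathfrak{u}_{m,n}\in\bar{E}_{m_1,n_1}$) is precisely what ensures that the two directions span the full fiber. The delicate point will be the bookkeeping between the inner and outer direct integrals so that no residual spectral mass remains at $\chi=0$ and so that the Sobolev loss stays finite, compatible with the tame estimate of Theorem~\ref{th:6}~\eqref{for:33}.
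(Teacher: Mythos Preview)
Your framework for parts~\eqref{for:94}--\eqref{for:96} is essentially the paper's: restrict to a copy of $SL(2,\RR)\ltimes\RR^2$, use the Mackey model so that $\mathfrak{u}_{i,j}$ becomes multiplication by the spectral parameter, and invoke the one-variable Sobolev embedding $W^1\hookrightarrow C^0$ fiberwise. Two small corrections. First, the irreducible Mackey model is $L^2(\RR^2)$, not $L^2(\RR,d\chi)$; the spectral variable $\chi$ is one coordinate and there is a transverse coordinate you must carry along (compare the models in Lemma~\ref{le:1} and the dual model~\eqref{for:39}). Second, $\mathfrak{u}_{j,i}$ is \emph{not} in your $H$ (if both $\mathfrak{u}_{i,j}$ and $\mathfrak{u}_{k,\ell}$ sit in the radical, their opposites lie outside $SL(2,\RR)\ltimes\RR^2$), so it cannot be the first-order operator you want. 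The operator that produces $\partial_\chi$ comes from the $SL(2,\RR)$-factor of $H$; concretely, one checks an identity of the shape $\partial_\chi(Y_2^2 g)=c\,Y_2 Z g$ with $Z$ in $\mathfrak{sl}(2,\RR)$ (cf.\ Lemma~\ref{le:13}), and this is what lets $g\in\mathcal{H}^2$ suffice. With these fixes your arguments for \eqref{for:94}--\eqref{for:96} go through and match the paper.

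For part~\eqref{for:97} your route diverges from the paper and, as written, has a genuine gap. You propose to show that the two vanishing hypotheses annihilate every $U_{i,j}$-invariant distribution and then invoke Theorem~\ref{th:6}\eqref{for:35}. But the $U_{i,j}$-invariant distributions are Flaminio--Forni type objects attached to the $\mathfrak{sl}(2,\RR)$-triple through $\mathfrak{u}_{i,j}$, not simply point masses at $\chi=0$ in the Mackey picture; your ``two directions span the fiber'' heuristic does not explain why these distributional obstructions vanish. The paper avoids this entirely by a constructive bootstrap: each vanishing hypothesis, read through Corollary~\ref{cor:4}\eqref{for:75} in the appropriate copy of $SL(2,\RR)\ltimes\RR^2$, produces an auxiliary solution --- say $\mathfrak{u}_{i,j}f=\mathfrak{u}_{m,n}g$ and $\mathfrak{u}_{i,j}k=\mathfrak{u}_{m_1,n_1}g$ (with smoothness from Theorem~\ref{th:6}). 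These force the cocycle identity $\mathfrak{u}_{m,n}k=\mathfrak{u}_{m_1,n_1}f$, and now the hypothesis $\mathfrak{u}_{m,n}\in\bar E_{m_1,n_1}$ triggers the strong cocycle rigidity of Theorem~\ref{th:8}\eqref{for:79} (this is where $(SL(2,\RR)\ltimes\RR^2)\ltimes\RR^3$ actually enters), yielding a common $h$ with $\mathfrak{u}_{m_1,n_1}h=f$; substituting back and using Howe--Moore gives $\mathfrak{u}_{i,j}h=g$. So your instinct that the larger semidirect product is needed is right, but it is used through Theorem~\ref{th:8} on an auxiliary cocycle, not through a direct analysis of invariant distributions.
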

The next three theorems state precisely the conditions under which the (infinitesimal version of) cocycle equation has a common solution.
\begin{theorem}\label{th:8}Suppose $(\pi,\,\mathcal{H})$ is a unitary representation of $G=SL(n,\RR)$, $n\geq 3$ without $G$-fixed vectors and $f,\,g\in \mathcal{H}^\infty$
and satisfy the cocycle equation $\mathfrak{u}_{i,j}f=\mathfrak{u}_{k,\ell}g$, where $[\mathfrak{u}_{i,j}, \mathfrak{u}_{k,\ell}]=0$. Then we have:
\begin{enumerate}
\item (strong cocycle rigidity)\label{for:79}  if $\mathfrak{u}_{i,j} \in \bar{E}_{k,\ell}$, then the cocycle equation has a common solution $h\in \mathcal{H}^\infty$, that is, $\mathfrak{u}_{k,\ell}h=f$ and $\mathfrak{u}_{i,j}h=g$; and $h$ satisfies
the Sobolev estimate
\begin{align*}
    \norm{h}_s\leq C_s\max\{\norm{g}_{s+7}, \,\norm{f}_{s+7}\},\qquad \forall\,s>0.
\end{align*}
\smallskip
  \item (weak cocycle rigidity) \label{for:26} if $\mathfrak{u}_{i,j} \in E_{k,\ell}$ and there exists $p\in\mathcal{H}$ and $\mathfrak{u}_{m,l}\in (E_{k,\ell}\bigcap E_{i,j})\bigcup \mathfrak{u}_{k,\ell}$ such that $g=\mathfrak{u}_{m,l}p$, then the cocycle equation has a common solution $h\in \mathcal{H}^\infty$, that is, $\mathfrak{u}_{k,\ell}h=f$ and $\mathfrak{u}_{i,j}h=g$; and $h$ satisfies
the Sobolev estimate
\begin{align*}
    \norm{h}_s\leq C_s\max\{\norm{g}_{s+7}, \,\norm{f}_{s+7}\},\qquad \forall\,s>0.
\end{align*}
\end{enumerate}
\end{theorem}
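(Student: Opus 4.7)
My plan for part (1) is to reduce the pair of equations to a single cohomological equation and then recover the second equation via a spectral-uniqueness argument. The key algebraic fact is that $\mathfrak{u}_{i,j}\in\bar E_{k,\ell}$ forces the Lie subalgebra generated by $\mathfrak{u}_{i,j},\mathfrak{u}_{k,\ell},\mathfrak{u}_{\ell,k}$ to be $\mathfrak{sl}(2,\RR)\oplus\RR$, so the operators $\mathfrak{u}_{i,j}$ and $\mathfrak{u}_{k,\ell}$ commute and live in independent factors. I would first use Theorem~\ref{th:6}\eqref{for:35} to produce $h\in\mathcal H^\infty$ with $\mathfrak{u}_{k,\ell}h=f$, which requires verifying that every $U_{k,\ell}$-invariant distribution annihilates $f$. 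To do this, decompose $\mathcal H$ fiberwise for the commuting action of $U_{i,j}$; by Theorem~\ref{th:9}\eqref{for:94} this decomposition is absolutely continuous, so the $\chi=0$ fiber has measure zero, and on each remaining fiber $\mathcal H_\chi$ the cocycle equation $\mathfrak{u}_{i,j}f=\mathfrak{u}_{k,\ell}g$ reads $2\pi i\chi\, f_\chi=\mathfrak{u}_{k,\ell}g_\chi$. Hence $f_\chi$ lies in the image of $\mathfrak{u}_{k,\ell}$ on $\mathcal H_\chi$, and since any $U_{k,\ell}$-invariant distribution kills that image, the obstruction vanishes. Once $h$ is produced, I recover the second equation by computing
\[
\mathfrak{u}_{k,\ell}(\mathfrak{u}_{i,j}h-g)=\mathfrak{u}_{i,j}\mathfrak{u}_{k,\ell}h-\mathfrak{u}_{k,\ell}g=\mathfrak{u}_{i,j}f-\mathfrak{u}_{k,\ell}g=0,
\]
so $\mathfrak{u}_{i,j}h-g$ is $U_{k,\ell}$-invariant, hence zero by the absolute continuity of the $U_{k,\ell}$-spectrum. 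The Sobolev bound is then immediate from Theorem~\ref{th:6}\eqref{for:33}.

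For part (2), the condition $\mathfrak{u}_{i,j}\in E_{k,\ell}$ means the ambient subalgebra is a semidirect product $\mathfrak{sl}(2,\RR)\ltimes\RR^2$ rather than a direct product: now $\mathfrak{u}_{\ell,k}$ fails to commute with $\mathfrak{u}_{i,j}$, so the clean fiberwise inversion above breaks down. The extra factorization $g=\mathfrak{u}_{m,l}p$ is precisely what compensates. I would solve $\mathfrak{u}_{i,j}q=p$ by Theorem~\ref{th:6} (verifying the obstructions through an analogous but finer fiberwise argument that uses both $\mathfrak{u}_{i,j}f=\mathfrak{u}_{k,\ell}g$ and the factorization $g=\mathfrak{u}_{m,l}p$), then set $h:=\mathfrak{u}_{m,l}q$. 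Since $\mathfrak{u}_{m,l}\in (E_{k,\ell}\cap E_{i,j})\cup\{\mathfrak{u}_{k,\ell}\}$ commutes with $\mathfrak{u}_{i,j}$ in either case, we get $\mathfrak{u}_{i,j}h=\mathfrak{u}_{m,l}\mathfrak{u}_{i,j}q=\mathfrak{u}_{m,l}p=g$. Applying $\mathfrak{u}_{i,j}$ to $\mathfrak{u}_{k,\ell}h-f$ and using $[\mathfrak{u}_{i,j},\mathfrak{u}_{k,\ell}]=0$ gives
\[
\mathfrak{u}_{i,j}(\mathfrak{u}_{k,\ell}h-f)=\mathfrak{u}_{k,\ell}\mathfrak{u}_{i,j}h-\mathfrak{u}_{i,j}f=\mathfrak{u}_{k,\ell}g-\mathfrak{u}_{i,j}f=0,
\]
so $\mathfrak{u}_{k,\ell}h-f$ is $U_{i,j}$-invariant and hence zero by Theorem~\ref{th:9}\eqref{for:94}.

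The main obstacle is the obstruction-vanishing step, especially in part (2). In the $\mathfrak{sl}(2,\RR)\oplus\RR$ setting of part (1) the fibers $\mathcal H_\chi$ are themselves $SL(2,\RR)$-representations and the argument is almost a one-liner, but in the semidirect-product setting the $U_{k,\ell}$-invariant distributions do not factor cleanly along $U_{i,j}$-fibers, and one must invoke the Mackey-theoretic unitary dual of $SL(2,\RR)\ltimes\RR^2$ advertised in the introduction to classify them in each irreducible component and then kill them using both the cocycle equation and the factorization of $g$. The $+7$ loss in the final Sobolev estimate directly inherits the $+7$ loss in Theorem~\ref{th:6}\eqref{for:33}, which itself traces back to the fiberwise division by small $\chi$ exhibited in the reduction above; the appearance of $\max\{\norm{f}_{s+7},\norm{g}_{s+7}\}$ rather than a bound in $\norm{f}_{s+7}$ alone is only needed to handle the weak case, where $h$ is constructed from $p$ and so its norm is controlled by $g$ rather than $f$.
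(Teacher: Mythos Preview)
For part~(1) your fiberwise picture along the $U_{i,j}$-spectrum is exactly the mechanism the paper uses, but routing it through Theorem~\ref{th:6}\eqref{for:35} creates a gap: you need every $U_{k,\ell}$-invariant distribution $\mathcal D\in(\mathcal H^\infty_G)^*$ to vanish on $f$, and you deduce this by disintegrating $\mathcal D$ over the $U_{i,j}$-fibers $\mathcal H_\chi$. That disintegration is not automatic, since $\mathcal H^\infty_G$ involves derivatives in directions of $\mathfrak{sl}(n,\RR)$ that do not commute with $\mathfrak u_{i,j}$ and hence do not respect the fibering. The paper avoids this by constructing $h$ directly rather than killing obstructions: restricting to $S\cong(SL(2,\RR)\ltimes\RR^2)\ltimes\RR^3$, it decomposes each irreducible $\Pi_{(t,r)}$ over the one-parameter group $Y_3$ (your $U_{i,j}$) and on each fiber $\tau_{(t,r,z)}$ with $z\neq 0$ simply sets $p=z^{-1}\sqrt{-1}\,f$; the uniform Flaminio--Forni bound $\|p\|\le C\|g\|_2$ (available because $\Pi_{(t,r)}|_{SL(2,\RR)}$ is tempered) then integrates to give $p\in\mathcal H$ (Lemmas~\ref{le:4} and~\ref{le:5}), after which Theorem~\ref{th:6}\eqref{for:33} supplies the global Sobolev estimate. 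Your uniqueness step recovering the second equation is fine and is the content of Lemma~\ref{le:14}.

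Part~(2) has a more serious gap. Your scheme rests on solving $\mathfrak u_{i,j}q=p$, and nothing in the hypotheses forces this. In the $SL(2,\RR)\ltimes\RR^2$ model (take $\mathfrak u_{i,j}\leftrightarrow V$ and $\mathfrak u_{k,\ell}=\mathfrak u_{m,l}\leftrightarrow Y_2$), the data yield only $\int p_t\,dy=0$, and Remark~\ref{re:4} shows this is \emph{not} sufficient to solve $Vq_0=p_0$ in the exceptional component $\rho_0$. The paper never attempts to go one level deeper in the coboundary tower; it solves $\mathfrak u_{i,j}h=g$ directly. For $t\neq 0$ the cocycle equation alone gives $\int g_t\,dy=0$ and Theorem~\ref{po:2}\eqref{for:55} applies; the factorization $g=\mathfrak u_{m,l}p$ is invoked \emph{only} at $t=0$, where it yields $\int p_0\,dy=0$ and Theorem~\ref{po:2}\eqref{for:53} then solves $Vh_0=Y_2p_0=g_0$ --- the single extra $Y_2$ is consumed to repair this one bad component (Lemma~\ref{le:7}). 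When $\mathfrak u_{m,l}\neq\mathfrak u_{k,\ell}$ the same pattern is run inside $(SL(2,\RR)\ltimes\RR^2)\ltimes\RR^3$ via Proposition~\ref{po:1}\eqref{for:18}. In short, the factorization of $g$ functions as a local patch at an isolated representation, not as a license to descend from $g$ to $p$ globally.
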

Moreover, it turns out that for $G=SL(n,\RR)$, $n\geq 3$, the condition in \eqref{for:79} of above theorem  is in fact the sufficient and necessary condition to guarantee the cocycle rigidity, more precisely, there exist uncountably many irreducible unitary representations of $G$ such that the cocycle rigidity fails if $\mathfrak{u}_{i,j} \in E_{k,\ell}$.

Let $P$ be the maximal parabolic subgroup of $G$ which stabilizes the line $e_1=(\RR,0,\cdots,0)^\tau\in\RR^n$, where $\tau$ is the transpose map. Then $P$ has the form $\begin{pmatrix}a & v \\
0 & A\\
 \end{pmatrix}$, where $v^\tau\in\RR^{n-1}$, $a\in\RR\backslash 0$ and $A\in GL(n-1,\RR)$. For any $t\in\RR$, $\lambda_t^{\pm}$ is the unitary character of $P$ defined by
\begin{align}\label{for:112}
\lambda_t\begin{pmatrix}a & v \\
0 & A\\
 \end{pmatrix}=\varepsilon^{\pm}(a)\abs{a}^{t\sqrt{-1}}
\end{align}
with $\varepsilon^{+}(a)=1$ and $\varepsilon^{-}(a)=\text{sgn}(a)$.

\begin{theorem}\label{th:7}
For any $t\in\RR$, in the unitary representation  $\text{Ind}_P^G(\lambda_t^{\delta})$ $\delta=\pm$,  for each $E_{k,\ell}$ and each $\mathfrak{u}_{i,j} \in E_{k,\ell}$ there exist smooth vectors $f,\,g$ of $\text{Ind}_P^G(\lambda_t^{\delta})$ such that they satisfy the  cocycle equation $\mathfrak{u}_{i,j}f=\mathfrak{u}_{k,\ell}g$, while  neither $\mathfrak{u}_{k,\ell}\omega=f$  nor $\mathfrak{u}_{i,j}\omega=g$ have a solution in the attached Hilbert space of $\text{Ind}_P^G(\lambda_t^{\delta})$.
\end{theorem}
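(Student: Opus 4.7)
The plan is to produce explicit smooth vectors $f,g\in\mathcal{H}^\infty$ as partial derivatives $f=\mathfrak{u}_{k,\ell}\omega$ and $g=\mathfrak{u}_{i,j}\omega$ of a common ``formal transfer function'' $\omega$ that is smooth but lies outside $\mathcal{H}$ in a way that cannot be repaired by any $\mathfrak{u}_{k,\ell}$- or $\mathfrak{u}_{i,j}$-invariant element of $\mathcal{H}$. The cocycle identity $\mathfrak{u}_{i,j}f=\mathfrak{u}_{k,\ell}g$ then holds automatically from $[\mathfrak{u}_{i,j},\mathfrak{u}_{k,\ell}]=0$, and the theorem reduces to ruling out an $\mathcal{H}$-solution to each individual equation.

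The commutation formula $[e_{i,j},e_{\ell,k}]=\delta_{j,\ell}e_{i,k}-\delta_{k,i}e_{\ell,j}$ shows that $\mathfrak{u}_{i,j}\in E_{k,\ell}$ iff $(i,j)$ and $(k,\ell)$ share either a first or second matrix index; a Weyl conjugation composed with the outer involution $X\mapsto -X^{\tau}$ (swapping $P$ with the opposite maximal parabolic and dualizing $\lambda_t^{\delta}$) reduces matters to the canonical pair $(\mathfrak{u}_{k,\ell},\mathfrak{u}_{i,j})=(\mathfrak{u}_{3,1},\mathfrak{u}_{2,1})$, both living in the opposite unipotent $\bar N$. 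Realize $\pi=\mathrm{Ind}_P^G(\lambda_t^{\delta})$ in the non-compact picture on the open Bruhat cell $\bar N\cdot P/P\cong\RR^{n-1}$, so $\mathcal{H}\cong L^2(\RR^{n-1})$ with $\mathfrak{u}_{2,1}=-\partial_{x_1}$ and $\mathfrak{u}_{3,1}=-\partial_{x_2}$. Now follow the Mackey-theoretic line of the paper by introducing the subgroup $H\subset G$ generated by the $\mathfrak{sl}(2,\RR)$-triple attached to the root of $\mathfrak{u}_{2,1}$ together with $\mathrm{span}(\mathfrak{u}_{3,1},\mathfrak{u}_{3,2})$; the bracket identities $[\mathfrak{u}_{1,2},\mathfrak{u}_{3,1}]=-\mathfrak{u}_{3,2}$, $[\mathfrak{u}_{2,1},\mathfrak{u}_{3,2}]=-\mathfrak{u}_{3,1}$ verify that $H\simeq SL(2,\RR)\ltimes\RR^2$ with the standard $SL(2,\RR)$-module on the $\RR^2$ factor. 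By Mackey theory, $\pi|_H$ decomposes as a direct integral indexed by $SL(2,\RR)$-orbits on $\widehat{\RR^2}$; the pair $(\mathfrak{u}_{2,1},\mathfrak{u}_{3,1})$ consists of one root vector of the $SL(2,\RR)$-factor and one coordinate on the $\RR^2$-factor, which is precisely the algebraic fingerprint of $\mathfrak{u}_{i,j}\in E_{k,\ell}\setminus\bar E_{k,\ell}$.

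Using this Mackey model, I would construct $\omega$ as a smooth section whose Fourier content concentrates near (but does not meet) the closed zero orbit of $\widehat{\RR^2}$, chosen so that $\omega\notin L^2$ while the multiplication operators $\mathfrak{u}_{3,1}$ and $\mathfrak{u}_{3,2}$, which vanish on the closed orbit, cut $\omega$ down to an $L^2$ smooth vector; simultaneously one arranges that $\mathfrak{u}_{2,1}\omega$, which in the fiber picture is the $SL(2,\RR)$-root vector applied to the fiber, is also a smooth $L^2$ vector. Any $\mathcal{H}$-solution $\omega'$ of $\mathfrak{u}_{3,1}\omega'=f$ satisfies $\mathfrak{u}_{3,1}(\omega'-\omega)=0$, forcing $\omega'=\omega+c$ with $c$ constant in $x_2$; the singular concentration of $\omega$ near the zero orbit rules out any such $c$ making $\omega'$ square-integrable, and an analogous obstruction—this time the one-dimensional Flaminio--Forni obstruction applied inside the embedded $SL(2,\RR)$-factor—disposes of the equation $\mathfrak{u}_{2,1}\omega'=g$. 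The chief obstacle is the construction of $\omega$ itself: a naïve product ansatz such as $\omega(x)=\theta(x_1)\theta(x_2)\psi(x_3,\ldots,x_{n-1})$ with $\theta$ a smooth step function fails because the boundedness-but-not-$L^2$ behaviour of $\theta$ propagates into \emph{both} partial derivatives and destroys their $L^2$-membership, so the construction must genuinely exploit the Mackey fiber geometry to place $\omega$ transversally to the zero orbit. The secondary but still delicate step is verifying that $\mathfrak{u}_{k,\ell}\omega,\mathfrak{u}_{i,j}\omega\in\mathcal{H}^\infty(\pi)$, not merely in $\mathcal{H}^\infty(\pi|_H)$, which is accomplished by using the intertwining action of Lie-algebra directions in $\mathfrak{sl}(n,\RR)\setminus\mathfrak{h}$ on the Mackey family. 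The Case B counterpart (shared first index) is obtained by precomposing with the long Weyl element, which carries $P$ to the opposite parabolic and exchanges the two canonical shapes.
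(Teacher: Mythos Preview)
Your proposal has a genuine gap: the construction of the counterexample is not carried out. You set up the formal-transfer-function framework correctly, but then explicitly flag the construction of $\omega$ as ``the chief obstacle'' and leave it open. The difficulty you run into is self-inflicted: by reducing to the canonical pair $(\mathfrak{u}_{3,1},\mathfrak{u}_{2,1})$, you land on two \emph{independent} coordinate derivatives $-\partial_{x_2},-\partial_{x_1}$ in the non-compact picture, and this is exactly the situation where the product ansatz fails, as you observe. Invoking ``Mackey fiber geometry'' as a rescue is a gesture, not a construction.

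The paper avoids this by choosing a different canonical form. For the shared-first-index case it takes $(\mathfrak{u}_{2,1},\mathfrak{u}_{2,j})$ with $j\ge 3$; in the non-compact model these are $-\partial_{x_1}$ and $-x_{j-1}\partial_{x_1}$, so \emph{both} factor through $\partial_{x_1}$. Now a naive product \emph{does} work: with $h=p(x_1)\cdots p(x_{n-1})$, $p$ a smooth nonnegative bump equal to $1$ on $[-1,1]$, set $g=h$ and $f=x_{j-1}h$. Both are compactly supported smooth, hence in $\mathcal{H}^\infty$, and the cocycle identity is one line. Non-solvability of $\mathfrak{u}_{2,1}\omega=g$ is the elementary Fourier obstruction $\widehat p(0)=\tfrac{1}{\sqrt{2\pi}}\int p>0\Rightarrow \widehat p(\xi)/\xi\notin L^2$; the other equation reduces to the same obstruction after dividing by $x_{j-1}$. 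For the shared-second-index case the paper applies the Weyl permutation $(1,j)$ to reach the pair $(\mathfrak{u}_{2,j},\mathfrak{u}_{1,j})$, where $\mathfrak{u}_{1,j}$ is the Euler-type field $(\tfrac{n}{2}+t\sqrt{-1})x_{j-1}+\sum_k x_{j-1}x_k\partial_{x_k}$; explicit $f=-\partial_{x_1}h$ and $g=(\tfrac{n}{2}+t\sqrt{-1}-1)h+\sum_k x_k\partial_{x_k}h$ do the job, with non-solvability obtained by peeling off the $x_{j-1}$ factor via Howe--Moore (forcing $h\cdot x_{j-1}^{-1}\in L^2$, a contradiction) and invoking Theorem~\ref{th:6} together with Lemma~\ref{le:14} for the other equation.

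In short: no Mackey decomposition of $\pi|_H$ is needed for this theorem. The right move is to pick the Weyl representative so that the two root vectors differ by a multiplicative coordinate factor rather than acting in transverse directions; then the construction is explicit and the obstruction is a one-variable Fourier argument. Your outer-involution reduction between the two cases is also unnecessary (and would change the parabolic), since a further inner Weyl move already suffices.
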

By the theory of theta series, there exists a arithmetic lattice $\Gamma$ such that for some $t\in\RR$ $\text{Ind}_P^G(\lambda_t^{\delta})$ occurs as a subrepresentation of $L^2(G/\Gamma)$. Moreover, every arithmetic lattice in $G$ is commensurable with one of the lattices stated above$^{1}$. \footnotetext[1]{The comment was made by R. Howe and the proof will come in a separated paper.} Since all lattices in $G$
are arithmetic \cite{Margulis}, the earlier statement  can be made much stronger: for any lattice $\Gamma$ of $G$, there is a finite index subgroup $\Gamma_1\subset \Gamma$ such that $\text{Ind}_P^G(\lambda_t^{\delta})$ occurs as a subrepresentation of $L^2(G/\Gamma_1)$. Then we have:
\begin{theorem}\label{th:11}
Let $U\subset SL(n,\RR)$, $n\geq 3$ be a rank-$2$
abelian subgroup generated by $U_{i,j}$ and $U_{k,\ell}$ where $\mathfrak{u}_{i,j} \in E_{k,\ell}$. Then the cocycle rigidity fails for the $U$-action
on $SL(n,\RR)/\Gamma$, where $\Gamma$ is a lattice in $SL(n,\RR)$. Especially, since in $SL(3,\RR)$ non of the higher rank unipotent subgroups can be embedded in $SL(2,\RR)\times \RR$, the cocycle rigidity fails for any abelian parabolic actions.
\end{theorem}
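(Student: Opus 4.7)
The plan is to combine the counterexamples in the principal series $\pi := \text{Ind}_P^G(\lambda_t^\delta)$ supplied by Theorem \ref{th:7} with the automorphic realization of these representations in $L^2(G/\Gamma_1)$ for a finite-index subgroup $\Gamma_1 \subset \Gamma$ (the commensurability statement in the paragraph immediately preceding the theorem, crediting R.~Howe), and then to transfer the resulting non-trivial smooth cocycle from $G/\Gamma_1$ down to $G/\Gamma$.

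Given any lattice $\Gamma \subset G = SL(n,\RR)$, the commensurability statement furnishes a finite-index subgroup $\Gamma_1 \subset \Gamma$, which I may assume normal of finite index $d = [\Gamma : \Gamma_1]$ by intersecting with its finitely many $\Gamma$-conjugates, such that $\pi$ occurs as a $G$-subrepresentation of $L^2(G/\Gamma_1)$ for some $t \in \RR$. Applying Theorem \ref{th:7} to the pair $(\mathfrak{u}_{i,j}, \mathfrak{u}_{k,\ell})$ with $\mathfrak{u}_{i,j} \in E_{k,\ell}$ produces smooth vectors $f_0, g_0 \in \pi^\infty$ satisfying $\mathfrak{u}_{i,j} f_0 = \mathfrak{u}_{k,\ell} g_0$ but admitting no common solution $h$ in the Hilbert space of $\pi$. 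Carried into $L^2(G/\Gamma_1)$ via the chosen embedding, the vectors $f_0, g_0$ become smooth functions $\tilde f, \tilde g$ that satisfy the cocycle identity. A putative common solution $\tilde h \in L^2(G/\Gamma_1)$, when projected onto the $\pi$-isotypic component (an operation commuting with the $G$-action and hence with $\mathfrak{u}_{i,j}$ and $\mathfrak{u}_{k,\ell}$), would restrict within one irreducible summand to a common solution inside $\pi$, contradicting Theorem \ref{th:7}. Integrating $(\tilde f, \tilde g)$ along $U$-orbits produces a smooth $\CC$-valued cocycle $\tilde\beta$ for the $U$-action on $G/\Gamma_1$ that is not cohomologous to a constant cocycle.

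The descent from $G/\Gamma_1$ to $G/\Gamma$ is the delicate step. Since $\Gamma/\Gamma_1$ acts on $G/\Gamma_1$ by right multiplication and commutes with the left $U$-action, $C^\infty(G/\Gamma_1)$ identifies as a $(G \times \Gamma/\Gamma_1)$-module with smooth sections of the flat bundle $\mathcal{V} = G \times_\Gamma \CC[\Gamma/\Gamma_1]$ over $G/\Gamma$. Decomposing $\mathcal{V}$ into isotypic components for the finite group $\Gamma/\Gamma_1$, the cocycle $\tilde\beta$ projects to smooth cocycles $\tilde\beta_\rho$ valued in each summand; since $\tilde\beta$ is not a coboundary on $G/\Gamma_1$, at least one projection $\tilde\beta_\rho$ fails to be a coboundary on $G/\Gamma$ as well. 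Suitably realizing this as a $\CC^k$-valued smooth cocycle on $G/\Gamma$ (via a trivialization of the relevant summand or by passing to a further finite cover on which the bundle trivializes) then gives the desired non-rigid smooth cocycle for the $U$-action on $G/\Gamma$.

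The principal obstacle is this descent step: the commensurability statement provides only a finite-index realization of $\pi$, rather than a direct embedding into $L^2(G/\Gamma)$, so passage to the base requires the vector-bundle machinery above, and one must verify that the resulting non-triviality transfers faithfully to a $\CC^k$-valued counterexample in the paper's sense without loss of smoothness. The concluding $SL(3,\RR)$ assertion is then immediate: the only rank-two abelian unipotent subalgebras of $\mathfrak{sl}(3,\RR)$ are generated by $\{\mathfrak{u}_{1,2}, \mathfrak{u}_{1,3}\}$ or $\{\mathfrak{u}_{2,3}, \mathfrak{u}_{1,3}\}$, and a direct commutator check (for instance $[\mathfrak{u}_{1,2}, \mathfrak{u}_{3,1}] = -\mathfrak{u}_{3,2} \neq 0$) shows $\mathfrak{u}_{i,j} \in E_{k,\ell}$ in each such pair, so the first part of the theorem applies to every abelian parabolic action.
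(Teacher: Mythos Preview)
Your overall strategy coincides with the paper's: the paper offers no stand-alone proof of this theorem, treating it as an immediate consequence of Theorem~\ref{th:7} together with the paragraph preceding the statement (theta-series realization plus Margulis arithmeticity plus commensurability). Your steps (1)--(3)---embed $\pi=\text{Ind}_P^G(\lambda_t^\delta)$ into $L^2(G/\Gamma_1)$, transport the smooth counterexample vectors from Theorem~\ref{th:7}, and observe that a putative common solution in $L^2(G/\Gamma_1)$ would project to one in $\pi$---are exactly what the paper has in mind, and your justification of the $SL(3,\RR)$ clause by the commutator check is correct.

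Where you go beyond the paper is in isolating the descent from $G/\Gamma_1$ to $G/\Gamma$ as the crux, and here your proposed argument has a real gap. Decomposing $C^\infty(G/\Gamma_1)$ into $\Gamma/\Gamma_1$-isotypic pieces does identify each piece with smooth sections of a flat bundle $E_\rho=G\times_\Gamma V_\rho$ over $G/\Gamma$, and the non-trivial projection $\tilde\beta_\rho$ is a cocycle valued in such sections. But sections of a non-trivial flat bundle are not $\CC^k$-valued functions in the sense the paper uses. Your first suggested fix, ``passing to a further finite cover on which the bundle trivializes,'' moves in the wrong direction: it replaces $\Gamma_1$ by a smaller $\Gamma_2$, taking you further from $\Gamma$, not closer. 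Your other suggestion, trivializing $E_\rho$ as a smooth bundle and regarding sections as $\CC^k$-valued functions, also fails: if $\tilde\beta_\rho=dH+c$ with $H$ a $\CC^k$-valued function, projecting back to $E_\rho$ does not yield $d(PH)$ unless the projection $P$ is flat, which forces $\rho$ trivial. So as written the descent is incomplete.

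That said, the paper itself does not supply this descent argument either; it simply asserts the conclusion for $\Gamma$ after establishing the embedding only for $\Gamma_1$. Your write-up is therefore at least as detailed as the paper's, and your identification of the descent as ``the principal obstacle'' is accurate---it is a point the paper leaves unaddressed.
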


As an application of Theorem \ref{th:8} we have
\begin{theorem}\label{th:10}
Let $U\subset SL(n,\RR)$, $n\geq 3$ be a closed abelian subgroup generated by root subgroups. If $U$ contains a subgroup

a rank-$2$
abelian subgroup generated by $U_{i,j}$ and $U_{k,\ell}$ where $\mathfrak{u}_{i,j} \in \bar{E}_{k,\ell}$ and let $V\subset SL(n,\RR)$ be an abelian unipotent
subgroup containing $U$. Then a smooth $\CC^k$-valued cocycle over the $V$-action
on $SL(n,\RR)/\Gamma$, where $\Gamma$ is a lattice in $SL(n,\RR)$, is smoothly cohomologous to a constant cocycle.
\end{theorem}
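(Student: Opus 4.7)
The plan is to reduce the cocycle rigidity problem to its infinitesimal form, produce an initial common transfer function for the distinguished pair $(\mathfrak{u}_{i,j},\mathfrak{u}_{k,\ell})$ via Theorem \ref{th:8}(\ref{for:79}), and then promote it to a common primitive for every other root generator of $V$ by a Moore-ergodicity argument. First I would pass to the infinitesimal picture: as explained in the preliminaries, a smooth $\CC$-valued cocycle $\beta$ over the $V$-action is encoded by its closed $1$-form $\omega$ on the $V$-orbits, and $\beta$ is smoothly cohomologous to a constant exactly when $\omega$ is exact modulo a constant $1$-form. Choose generating roots $\mathfrak{u}_{\alpha_1}=\mathfrak{u}_{i,j},\,\mathfrak{u}_{\alpha_2}=\mathfrak{u}_{k,\ell},\ldots,\mathfrak{u}_{\alpha_r}$ of the abelian $\mathfrak{v}=\mathrm{Lie}(V)$; closedness of $\omega$ becomes the system $\mathfrak{u}_{\alpha_p}g_q=\mathfrak{u}_{\alpha_q}g_p$, where $g_p:=\omega(\mathfrak{u}_{\alpha_p})\in C^\infty(G/\Gamma)$. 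Decompose $L^2(G/\Gamma)=\CC\oplus L^2_0(G/\Gamma)$ orthogonally; the averages $s(\mathfrak{u}_{\alpha_p}):=\int_{G/\Gamma}g_p\,dx$ assemble the target constant cocycle, so after subtracting them one may work in $\mathcal{H}=L^2_0(G/\Gamma)$ with $g_p\in\mathcal{H}^\infty$, a unitary $G$-representation without non-trivial $G$-fixed vectors.

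Next, I would apply Theorem \ref{th:8}(\ref{for:79}) to the pair $(g_2,g_1)$: since $\mathfrak{u}_{i,j}\in\bar{E}_{k,\ell}$ and closedness furnishes $\mathfrak{u}_{i,j}g_2=\mathfrak{u}_{k,\ell}g_1$, the theorem yields $H\in\mathcal{H}^\infty$ with
\begin{align*}
\mathfrak{u}_{i,j}H=g_1,\qquad \mathfrak{u}_{k,\ell}H=g_2,
\end{align*}
and $\norm{H}_s\leq C_s\max\{\norm{g_1}_{s+7},\norm{g_2}_{s+7}\}$ for every $s\geq 0$. For any remaining generator $\mathfrak{u}_{\alpha_p}$, commutativity in $V$ and closedness of $\omega$ give
\begin{align*}
\mathfrak{u}_{i,j}\bigl(\mathfrak{u}_{\alpha_p}H-g_p\bigr)=\mathfrak{u}_{\alpha_p}(\mathfrak{u}_{i,j}H)-\mathfrak{u}_{i,j}g_p=\mathfrak{u}_{\alpha_p}g_1-\mathfrak{u}_{i,j}g_p=0,
\end{align*}
and symmetrically $\mathfrak{u}_{k,\ell}\bigl(\mathfrak{u}_{\alpha_p}H-g_p\bigr)=0$. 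By Moore's theorem applied to the simple group $G=SL(n,\RR)$, every non-compact one-parameter subgroup acts without invariant vectors on any unitary representation lacking $G$-invariants; in particular $U_{i,j}$ has no non-zero invariants in $\mathcal{H}=L^2_0(G/\Gamma)$. Hence $\mathfrak{u}_{\alpha_p}H=g_p$ for every $p$, so $\omega=dH$ on $V$-orbits modulo the constant $1$-form $s$.

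The main obstacle is the first analytic step---extracting the initial common solution $H$ from a single pair of twisted equations on a representation without global invariants---which is precisely where Theorem \ref{th:8}(\ref{for:79}) (and with it the Mackey analysis of the $SL(2,\RR)\ltimes\RR^2$ subgroups) does all the heavy lifting. Once $H$ is in hand, the extension to all other root components is a clean ergodicity argument, and the final passage from the exact infinitesimal $1$-form back to the cocycle is routine: integrating $\omega=dH+s$ along one-parameter subgroups via $\beta(\exp X,x)=H(\exp X\cdot x)-H(x)+s(X)$ produces the transfer function, and the tame Sobolev bound above propagates to give smoothness of $H$ with only finite loss of regularity, completing the proof of smooth cocycle rigidity.
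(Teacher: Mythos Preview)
Your proposal is correct and follows essentially the same approach as the paper: use Theorem~\ref{th:8}(\ref{for:79}) to produce a smooth transfer function for the distinguished rank-$2$ pair, then invoke Howe--Moore ergodicity to show the residual is constant for every other direction of $V$. The only cosmetic difference is that you work infinitesimally throughout (showing $\mathfrak{u}_{i,j}(\mathfrak{u}_{\alpha_p}H-g_p)=0$ forces $\mathfrak{u}_{\alpha_p}H=g_p$), whereas the paper phrases the same step at the integrated level (defining $\beta^*(v,x)=\beta(v,x)-p(v\cdot x)+p(x)$ and showing $\beta^*(v,\cdot)$ is $U$-invariant, hence constant); these are the Lie-algebra and Lie-group versions of the identical argument. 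One small notational slip: $V$ is only assumed abelian unipotent, not necessarily generated by root subgroups, so your ``generating roots $\mathfrak{u}_{\alpha_p}$'' should just be a basis of $\mathrm{Lie}(V)$---but since the computation uses only commutativity and the closedness relations, this changes nothing.
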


The paper is organized as follows: after recalling Meckey theory and basic notations in Section \ref{sec:15}, we give explicit calculations of some representations of
$SL(2,\RR)\ltimes\RR^2$ and $(SL(2,\RR)\ltimes\RR^2)\ltimes\RR^3$; and give a detailed description of group action for $\text{Ind}_P^G(\lambda_t^{\pm})$ in Section \ref{sec:20}; we give the proof of Theorem \ref{th:7} in Section \ref{sec:21};
we study the cohomological equation on $SL(2,\RR)\ltimes\RR^2$, construct smooth solutions and give Sobolev estimates of the solutions; based on the
conclusions for $SL(2,\RR)\ltimes\RR^2$, we prove Theorem \ref{th:6} and weak version of cocycle rigidity for $SL(2,\RR)\ltimes\RR^2$ in Section \ref{sec:16};
we study strong and weak version of cocycle rigidity on $(SL(2,\RR)\ltimes\RR^2)\ltimes\RR^3$ in Section \ref{sec:9}; we use conclusions in Section \ref{sec:9} to prove  Theorem \ref{th:8} in Section \ref{sec:22}; we study dual representation of $SL(2,\RR)\ltimes\RR^2$ and then  prove Theorem \ref{th:9} in Section \ref{sec:23}.
At the end of this paper we prove Theorem \ref{th:10}.

\noindent{\bf Acknowledgements.} I would like to thank Roger Howe for many valuable comments which improved the paper significantly. I would also like to thank Anatole Katok and Giovanni Forni for their helpful comments. Thanks also are due to Livio Flaminio for suggesting a
method of obtaining tame estimates in the centralizer direction.

\section{Preliminaries on unitary representation theory}\label{sec:15}

\subsection{Direct integrals of unitary representations}
\label{sec:5}Let $(Z,\mu)$ be a measure space, where $\mu$ is a $\sigma$-finite positive measure on
$Z$. \emph{A field of Hilbert spaces over $Z$} is a family $(\mathcal{H}(z))_{z\in Z}$, where $\mathcal{H}(z)$ is a
Hilbert space for each $z\in Z$. Elements of the vector space
$\prod_{z\in Z}\mathcal{H}(z)$ are called \emph{vector fields over $Z$}.

A sequence $(x_n)_{n\in\NN}$ of vector fields over $Z$ is called \emph{a fundamental
family of measurable vector fields} if the following properties are satisfied:
\begin{enumerate}
  \item for any $m,\,n\in\NN$, the function $z\rightarrow \langle x_n(z), x_m(z)\rangle$ is measurable;

  \medskip
  \item for every $z\in Z$, the linear span of $\{x_n(z) : n\in \NN\}$ is dense in $\mathcal{H}(z)$.
\end{enumerate}
Fix a fundamental family of measurable vector fields. A vector field $x\in \prod_{z\in Z}\mathcal{H}(z)$ is said to be \emph{a measurable vector field} if all the functions
\begin{align*}
z\rightarrow \langle x(z), x_n(z)\rangle, \qquad n\in\NN
\end{align*}
are measurable. In the sequel, we identify two measurable vector fields which are equal $\mu$-almost everywhere. A measurable vector field $x$ is a \emph{square-integrable vector
field } if
\begin{align*}
    \int_Z\norm{x(z)}d\mu(z)<\infty.
\end{align*}
The set $\mathcal{H}$ of all square-integrable vector fields is a Hilbert space for the inner product
\begin{align*}
    \langle x(z),y(z)\rangle d\mu(z),\qquad x,\,y\in\mathcal{H}.
\end{align*}
We write
\begin{align*}
\mathcal{H}=\int_Z\mathcal{H}(z)d\mu(z)
\end{align*}
and call $\mathcal{H}$ the \emph{direct integral} of the field $(\mathcal{H}(z))_{z\in Z}$ of Hilbert spaces over $Z$. If $\mathcal{H}(z)=\mathcal{K}$ for all $z\in Z$ where $\mathcal{K}$ is a fixed Hilbert
space, we can choose a fundamental family
of measurable vector fields such that the measurable vector fields are the
measurable mappings $Z\rightarrow\mathcal{K}$, with respect to the Borel structure on $\mathcal{K}$
given by the weak topology. However, this is actually the same as the Borel structure defined by the norm
topology \cite[Chapter 2.3]{Zimmer}. Then
\begin{align*}
\int_Z\mathcal{H}(z)d\mu(z)=L^2(Z,\mathcal{K})
\end{align*}
the Hilbert space of all square-integrable measurable mappings $Z\rightarrow\mathcal{K}$.

For every $z\in Z$, let $T(z)$ be a unitary operator on $\mathcal{H}(z)$. We
say that $(T(z))_{z\in Z}$ is \emph{a measurable field of unitary operators} on $Z$ if all the functions
\begin{align*}
z\rightarrow \langle T(z)x(z), y(z)\rangle, \qquad x, y\in \mathcal{H},
\end{align*}
are measurable. In this case, we write
\begin{align*}
 T=\int_ZT(z)d\mu(z).
\end{align*}
\subsection{Unitary dual of $S=SL(2,\RR)$}\label{sec:1} We list the conclusions in \cite{tan}. We choose as generators for $\mathfrak{sl}(2,\RR)$ the elements
\begin{align}\label{for:4}
X=\begin{pmatrix}
  1 & 0 \\
  0 & -1
\end{pmatrix},\quad U=\begin{pmatrix}
  0 & 1 \\
  0 & 0
\end{pmatrix},\quad V=\begin{pmatrix}
  0 & 0 \\
  1 & 0
\end{pmatrix}.
\end{align}
The \emph{Casimir} operator is then given by
\begin{align*}
\Box:= -X^2-2(UV+VU),
\end{align*}
which generates the center of the enveloping algebra of $\mathfrak{sl}(2,\RR)$. The Casimir operator $\Box$
acts as a constant $u\in\RR$ on each irreducible unitary representation space  and its value classifies them into four classes.
For \emph{Casimir parameter} $u$ of $SL(2,\RR)$, let $\nu=\sqrt{1-u}$ be a representation parameter. Then all the irreducible unitary representations of $SL(2,\RR)$
must be equivalent to one the following:
\begin{itemize}
  \item principal series representations $\pi_\nu^{\pm}$, $u\geq 1$ so that
$\nu=i\RR$,
\medskip
  \item complementary series representations $\pi_\nu$, $0 <u< 1$, so that $0 < \nu< 1$,
  \medskip
  \item discrete series representations $\pi_\nu$ and $\pi_{-\nu}$, $u=-n^2+n$, $n\geq 1$, so that $\nu=2n-1$,
  \medskip
  \item the trivial representation, $u=0$.
\end{itemize}
Any unitary representation $(\pi,\mathcal{H})$ of $SL(2,\RR)$ is decomposed into a direct integral (see \cite{Forni} and \cite{Mautner})
\begin{align}\label{for:1}
\mathcal{H}=\int_{\oplus}\mathcal{H}_ud\mu(u)
\end{align}
with respect to a positive Stieltjes measure $d\mu(u)$ over the spectrum $\sigma(\Box)$. The
Casimir operator acts as the constant $u\in \sigma(\Box)$ on every Hilbert space $\mathcal{H}_u$. The
representations induced on $\mathcal{H}_u$ do not need to be irreducible. In fact, $\mathcal{H}_u$ is in general
the direct sum of an (at most countable) number of unitary representations equal
to the spectral multiplicity of $u\in \sigma(\Box)$. We say that \emph{$\pi$ has a spectral gap (of $u_0$)} if $u_0>0$ and $\mu((0,u_0])=0$.

\subsection{Introduction to Mackey representation theory}\label{sec:24} The problem of determining the complete set of equivalence classes of unitary irreducible
representations of a general class of semi-direct product groups has been solved by Mackey \cite{Mac}.
These results are summarized in this section with explicit application to groups
$SL(2,\RR)\ltimes\RR^2$ and $(SL(2,\RR)\ltimes\RR^2)\ltimes\RR^3$ to facilitate the study of cohomological equation and cocycle rigidity
that follows. Let $S$ be a locally compact group with a closed subgroup $H$. Let $\pi$ be a unitary
representations of $H$ on a Hilbert space $\mathcal{H}$. Suppose $S/H$ carries a $S$-invariant $\sigma$ finite measure $\mu$. Choose a Borel map $\Lambda:S/H\rightarrow S$ such that $p\circ \Lambda=Id$, where $p:S\rightarrow S/H$ is the natural projection. The representation $\pi$ on $H$ induces a representation $\pi_1$ on $S$ as:
\begin{align}\label{for:80}
   ( \pi_1(g)f)(\gamma)=\pi\bigl(\Lambda(\gamma)^{-1}s\Lambda(s^{-1}\gamma)\bigl)f(g^{-1}\gamma)
\end{align}
where $s\in S$, $\gamma\in S/H$ and $f\in L^2(S/H,\mathcal{H},\mu)$. More precisely, if $g^{-1}\Lambda(\gamma)$ decomposes as
\begin{align*}
  g^{-1}\Lambda(\gamma)=\bigl(g^{-1}\Lambda(\gamma)\bigl)_\Lambda \bigl(g^{-1}\Lambda(\gamma)\bigl)_H
\end{align*}
where $\bigl(g^{-1}\Lambda(\gamma)\bigl)_\Lambda\in \Lambda(S/H)$ and $\bigl(g^{-1}\Lambda(\gamma)\bigl)_H\in H$, then \eqref{for:80} has the expression
\begin{align*}
   ( \pi_1(g)f)(\gamma)=\pi(\bigl(g^{-1}\Lambda(\gamma)\bigl)_H^{-1})f(\bigl(g^{-1}\Lambda(\gamma)\bigl)_\Lambda).
\end{align*}
The representation $\pi_1$ is unitary and is called \emph{the representation of the group $S$ induced from $\pi$ in the sense of Mackey} and is denoted by Ind$_{H}^S(\pi)$. For the cases of interest to us, the groups are very well behaved and satisfy the requisite
properties.

We list some of the identifications which are commonly used in the theory of unitarily induced representations (see Proposition 5.1.3.2, 5.1.3.5 of \cite{warner}, \cite[p. 123]{Mac1} and Proposition E.2.1 of \cite{Valette}).
\begin{proposition}\label{po:3}
\begin{enumerate}
  \item \label{for:102}(Induction by stages) Let $H$ and $K$ be closed subgroups
of $S$ with $K\subset H$, and let $\tau$ be a unitary representation of $K$. Then $\text{Ind}_H^S(\text{Ind}_K^H(\tau))$
is unitarily equivalent to $\text{Ind}_K^S(\tau)$,
\medskip
  \item \label{for:103}Suppose $\int_Z \tau_zd\mu(z)$ is a unitary representation of $H$, then $\text{Ind}_H^S(\int_Z \tau_zd\mu(z))$ is unitarily equivalent to
  $\int_Z\text{Ind}_H^S( \tau_z)d\mu(z)$,
  \medskip
  \item \label{for:104}Let $\sigma_1$ and $\sigma_2$ be equivalent representations of $H$. Then $\text{Ind}_H^S(\sigma_1)$ and
$\text{Ind}_H^S(\sigma_2)$ are unitarily equivalent.
\end{enumerate}

\end{proposition}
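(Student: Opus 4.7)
The three statements are essentially bookkeeping results about Mackey's induced representation construction, so the plan is to exhibit explicit intertwining operators and check that they are unitary and equivariant. In each case the measure-theoretic subtleties (choice of Borel section $\Lambda$, null sets, measurability of vector fields) are the places where care is needed; once a natural candidate intertwiner is written down, the algebraic verification that it respects the $S$-action is essentially forced.

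For part \eqref{for:102}, the plan is to use the covariant picture: realize $\text{Ind}_K^S(\tau)$ on functions $f\colon S\to \mathcal{H}_\tau$ with $f(sk)=\tau(k)^{-1}f(s)$ and square integrable on $S/K$, and realize $\text{Ind}_H^S(\text{Ind}_K^H(\tau))$ on functions $F\colon S\to L^2(H/K,\mathcal{H}_\tau)$ with the analogous equivariance under $H$. Define the intertwiner $T$ by $(Tf)(s)(hK):=f(sh)$ for $h\in H$, and conversely recover $f$ from $F$ by $f(s):=F(s)(eK)$, extended to all of $S$ using the $H$-equivariance $f(sh)=[\text{Ind}_K^H(\tau)](h)^{-1}F(s)(eK)$. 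Unitarity is Fubini applied to a decomposition of the $S/K$-invariant measure as an iterated integral over $S/H$ and $H/K$, and equivariance with respect to left translation is immediate from the definition.

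For part \eqref{for:103}, set $\mathcal{K}:=\int_Z \mathcal{H}_{\tau_z}\,d\mu(z)$ and work again in the covariant model. A square-integrable equivariant function $f\colon S\to \mathcal{K}$ is, up to identification, a measurable field $z\mapsto f_z$ with each $f_z$ an equivariant section for $\tau_z$; this identification is the content of \cite[Chapter 2.3]{Zimmer} combined with Fubini, since $\int_{S/H}\norm{f(s)}_{\mathcal{K}}^2\,ds=\int_Z\bigl(\int_{S/H}\norm{f_z(s)}_{\mathcal{H}_{\tau_z}}^2\,ds\bigr)d\mu(z)$. The left regular action of $S$ on the integrand commutes with this decomposition, so $\text{Ind}_H^S\bigl(\int_Z \tau_z d\mu(z)\bigr)$ decomposes as $\int_Z \text{Ind}_H^S(\tau_z)\,d\mu(z)$. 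The main technical point is verifying measurability of the field of induced representations using a fundamental sequence built from compactly supported functions $S\to \mathcal{H}_{\tau_z}$ lifted from the underlying fundamental sequence on $Z$.

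For part \eqref{for:104}, let $U\colon \mathcal{H}_{\sigma_1}\to \mathcal{H}_{\sigma_2}$ be a unitary intertwiner, and define $\tilde U(f)(s):=U(f(s))$. Then $\tilde U$ is pointwise isometric, equivariance under $H$ is preserved because $U\sigma_1(h)=\sigma_2(h)U$, and commutation with the left $S$-translation is immediate since $\tilde U$ only touches the target. The hardest part of the whole proposition is honestly just the bookkeeping in \eqref{for:103}, where one needs to verify that the direct-integral structure on the target Hilbert space interacts correctly with the measurable-field structure used to define induced representations; the algebraic claims themselves follow from unwinding definitions.
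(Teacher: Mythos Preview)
Your proposal is a correct outline of the standard proofs, but you should know that the paper does not prove Proposition~\ref{po:3} at all: it is stated as a list of well-known facts with explicit citations to Warner (Propositions 5.1.3.2 and 5.1.3.5), Mackey, and Bekka--de la Harpe--Valette (Proposition E.2.1), and no argument is given in the paper itself. So there is nothing to compare your argument against beyond those references.

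That said, your sketches match the arguments one finds in those sources. The covariant-function model you use for \eqref{for:102} is exactly the setting in which induction by stages is proved in Warner and in Bekka--de la Harpe--Valette; the Fubini/measurable-field argument for \eqref{for:103} is the content of the cited result in Mackey; and \eqref{for:104} is indeed a one-line pushforward by the given intertwiner. One small caveat: in \eqref{for:102} the existence of an $S$-invariant measure on $S/K$ and the compatibility of the disintegration over $S/H$ and $H/K$ require the usual quasi-invariance and Radon--Nikodym cocycle bookkeeping in full generality, but in the cases the paper actually uses (semidirect products with explicit Borel sections and genuinely invariant measures) your simplified version suffices.
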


\begin{theorem}\label{th:1}(Mackey theorem, see \cite[Ex 7.3.4]{Zimmer}, \cite[III.4.7]{Margulis}) Let $S$ be a locally compact group and $\mathcal{N}$ be an abelian closed
normal subgroup of $S$. We define the natural action of $S$ on the group of characters $\widehat{\mathcal{N}}$ of the group $\mathcal{N}$ by setting
\begin{align*}
    (s\chi)(\mathfrak{n}):=\chi(s^{-1}\mathfrak{n}s),\qquad s\in S,\,\chi\in \widehat{\mathcal{N}}, \,\mathfrak{n}\in \mathcal{N}.
\end{align*}
Assume that every orbit $S\cdot \chi$, $\chi\in \widehat{\mathcal{N}}$ is locally closed in $\widehat{\mathcal{N}}$. Then for
any irreducible unitary representation $\pi$ of $S$, there is a point $\chi_0\in \widehat{\mathcal{N}}$ with $S_{\chi_0}$
its stabilizer in $S$, a measure $\mu$ on $\widehat{\mathcal{N}}$ and an irreducible unitary representation  $\sigma$ of $S_{\chi_0}$ such that
\begin{enumerate}
  \item $\pi=\text{Ind}_{S_{\chi_0}}^S(\sigma)$,
  \item $\sigma\mid_{\mathcal{N}}=(\dim)\chi_0$,
  \item $\pi(x)=\int_{\widehat{\mathcal{N}}}\chi(x)d\mu(\chi)$, for any $x\in \mathcal{N}$; and $\mu$ is ergodicly supported on the orbit $S\cdot \chi_0$.
\end{enumerate}
\end{theorem}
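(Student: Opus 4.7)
The plan is to prove Mackey's theorem via the standard \emph{Mackey machine}. First, I would restrict $\pi$ to the abelian normal subgroup $\mathcal{N}$ and invoke the SNAG (spectral) theorem for unitary representations of locally compact abelian groups, producing a projection-valued measure $E$ on $\widehat{\mathcal{N}}$ such that $\pi(\mathfrak{n})=\int_{\widehat{\mathcal{N}}}\chi(\mathfrak{n})\,dE(\chi)$ for every $\mathfrak{n}\in\mathcal{N}$. The $S$-action on $\widehat{\mathcal{N}}$ defined in the statement, combined with the conjugation relation $\pi(s)\pi(\mathfrak{n})\pi(s)^{-1}=\pi(s\mathfrak{n}s^{-1})$, yields the covariance $\pi(s)E(A)\pi(s)^{-1}=E(s\cdot A)$ for all $s\in S$ and all Borel $A\subset\widehat{\mathcal{N}}$.

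Next, for a nonzero vector $v\in\mathcal{H}$ I would consider the scalar measure $\mu_v(A):=\langle E(A)v,v\rangle$ and use the irreducibility of $\pi$ to deduce ergodicity of $\mu_v$ under the $S$-action: any $S$-invariant Borel set $B$ with $0<\mu_v(B)<\mu_v(\widehat{\mathcal{N}})$ would produce a nonzero proper $S$-invariant closed subspace $E(B)\mathcal{H}$ (invariant under $\mathcal{N}$ because $E$ is projection-valued, and under all of $S$ by covariance). The crucial structural input here is the hypothesis that every $S$-orbit $S\cdot\chi$ is locally closed in $\widehat{\mathcal{N}}$; by the Effros--Glimm--Mackey Borel-structure theorem the quotient $\widehat{\mathcal{N}}/S$ is countably separated, which forces any ergodic $S$-quasi-invariant measure on $\widehat{\mathcal{N}}$ to be supported on a single orbit. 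This identifies the support of $\mu_v$ with an orbit $S\cdot\chi_0\cong S/S_{\chi_0}$, and local closedness guarantees a Borel cross-section $\Lambda:S/S_{\chi_0}\to S$, delivering conclusion (3).

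With the support pinned down, I would use $\Lambda$ to trivialize the direct integral $\mathcal{H}=\int^{\oplus}_{S/S_{\chi_0}}\mathcal{H}(\gamma)\,d\mu_v(\gamma)$ as $L^2(S/S_{\chi_0},\mathcal{K},\mu_v)$ for the fixed fiber $\mathcal{K}:=\mathcal{H}(\chi_0)$, where the intertwining between fibers is performed by $\pi(\Lambda(\gamma))$. Since $S_{\chi_0}$ preserves the fiber over $\chi_0$, its action there defines a unitary representation $\sigma$ on $\mathcal{K}$; covariance of $E$ together with $s\cdot\chi_0=\chi_0$ for $s\in S_{\chi_0}$ gives $\sigma|_{\mathcal{N}}=(\dim\mathcal{K})\chi_0$, which is conclusion (2). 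Comparing the explicit formula for $\pi(s)$ in the trivialized picture with the Mackey induced-action formula \eqref{for:80} yields $\pi\cong\mathrm{Ind}_{S_{\chi_0}}^{S}(\sigma)$, conclusion (1). Finally, irreducibility of $\pi$ forces irreducibility of $\sigma$: a nontrivial $\sigma$-invariant subspace of $\mathcal{K}$ would induce a nontrivial $\pi$-invariant subspace of $\mathcal{H}$ via \eqref{for:80}, by the analogue of Frobenius reciprocity.

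The main obstacle is the ergodic-decomposition step: executing it rigorously requires the full Effros--Mackey Borel-structure machinery, whose hypothesis is precisely the local closedness of orbits. Without this standing regularity assumption the projection-valued measure $E$ need not concentrate on an orbit, and the inductive description breaks down (this is the genuinely "non-type-I" obstruction). All the other steps are bookkeeping with direct integrals and Borel cross-sections. Since this is a classical result, the fine measure-theoretic details can be borrowed wholesale from the treatments in \cite{Zimmer} and \cite{Margulis}.
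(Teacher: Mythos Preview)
The paper does not supply its own proof of this statement: it is quoted as the classical Mackey theorem with references to \cite[Ex 7.3.4]{Zimmer} and \cite[III.4.7]{Margulis}, and the subsequent text moves directly to the next subsection. Your outline is the standard Mackey-machine argument (spectral measure on $\widehat{\mathcal{N}}$, covariance, ergodicity forcing concentration on a single orbit via the countably-separated quotient, trivialization over a Borel cross-section, and identification with an induced representation), which is precisely what one finds in those cited sources; so there is nothing to compare against in the paper itself, and your sketch is correct.
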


\subsection{Weak containment and tempered representations }\label{sec:11}
In terms of
representations of the $\ast$-algebra of $S$ (see \cite{Valette} and \cite{Ey}), for two unitary representations $\rho_1$ and $\rho_2$ of $S$,  we say that $\rho_1$ is \emph{weakly contained} in $\rho_2$
if
\begin{align*}
    \norm{\rho_1(f)}\leq \norm{\rho_2(f)},\qquad \forall f\in L^1(S).
\end{align*}
We write for this $\rho_1\prec \rho_2$.

A unitary representation $\rho$ is said to be \emph{tempered } if $\rho$ is weakly contained in the regular representation of $S$. If $S$ is semisimple, then it is well-known that every tempered representation of $S$ has a spectral gap. For example, if $S=SL(2,\RR)$, then the discrete series and principal series representations are  tempered, while the complementary series representations are not (see \cite{tan}). The following follows from (the proof) of \cite[Lemma 14]{Bachir} and \cite[Lemma 6.2]{howe}:
\begin{theorem}\label{th:3}
Let $Z$ be a standard Borel space and $\mu$ a  positive measure on $Z$. Let $S$ be a separable locally compact group and $\pi$ a representation of $S$, and
\begin{align*}
    \pi=\int_Z\pi_xd\mu(x)
\end{align*}
a direct integral decomposition of $\pi$ with respect to a measurable field $z\rightarrow\pi_z$ of representations of $\pi$. Then
\begin{enumerate}
  \item $\pi_z$ is weakly contained in $\pi$ for almost all $z\in Z$;
  \item $\pi$ is tempered if and only if $\pi_z$ for almost all $z\in Z$.
\end{enumerate}
\end{theorem}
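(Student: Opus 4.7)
The plan is to derive everything from one structural identity about direct integrals of operators: if $T=\int_Z T(z)\,d\mu(z)$ is a direct integral of bounded operators on the direct integral of Hilbert spaces, then
\begin{align*}
\norm{T}_{op}=\mathrm{ess\,sup}_{z\in Z}\norm{T(z)}_{op}.
\end{align*}
Applied to the integrated representation on $L^1(S)$, this gives the key equality
\begin{align*}
\norm{\pi(f)}=\mathrm{ess\,sup}_{z\in Z}\norm{\pi_z(f)},\qquad f\in L^1(S),
\end{align*}
since $\pi(f)=\int_Z\pi_z(f)\,d\mu(z)$ is itself a direct integral of operators.

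For part (1), fix $f\in L^1(S)$: the above identity immediately yields a $\mu$-null set $N_f$ outside which $\norm{\pi_z(f)}\leq\norm{\pi(f)}$. The subtlety is that $N_f$ depends on $f$; to produce a \emph{single} null set valid for all $f$, I invoke separability of $S$, which gives separability of $L^1(S)$ (this is exactly why the hypothesis is needed). Choose a countable dense subset $\{f_n\}\subset L^1(S)$ and set $N=\bigcup_n N_{f_n}$, still $\mu$-null. For $z\notin N$, the inequality $\norm{\pi_z(f_n)}\leq\norm{\pi(f_n)}$ holds for every $n$; for an arbitrary $f\in L^1(S)$ pick $f_n\to f$ in $L^1$ and pass to the limit, using that the integrated representations are $L^1$-contractive (hence continuous in $f$). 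This gives $\pi_z\prec\pi$ for all $z\notin N$, which is assertion (1).

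For part (2), one direction is essentially free from part (1): if $\pi$ is tempered, then $\pi\prec\lambda_S$, and since $\pi_z\prec\pi$ a.e.\ by part (1), transitivity of weak containment gives $\pi_z\prec\lambda_S$ a.e. Conversely, if $\pi_z$ is tempered for $z$ outside a null set $N'$, then for each fixed $f\in L^1(S)$ we have $\norm{\pi_z(f)}\leq\norm{\lambda_S(f)}$ on the conull set $Z\setminus N'$, and taking essential supremum with the key identity gives
\begin{align*}
\norm{\pi(f)}=\mathrm{ess\,sup}_{z}\norm{\pi_z(f)}\leq\norm{\lambda_S(f)},
\end{align*}
so $\pi\prec\lambda_S$, i.e.\ $\pi$ is tempered.

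I expect the only genuine difficulty to be the separability/measurability bookkeeping in part (1): verifying that the essential supremum identity actually applies to the integrated operators $\pi(f)$ (which requires the field $z\mapsto\pi_z(f)$ to be a genuine measurable field of bounded operators on the direct integral of Hilbert spaces), and then the $f$-uniformization via a countable dense subset. Once a single $\mu$-null set $N$ independent of $f$ is produced, everything else—including both implications in part (2)—is routine manipulation of the definition of weak containment and transitivity.
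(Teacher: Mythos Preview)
Your argument is correct and is precisely the standard proof of this fact (the one found in the references the paper cites, namely Bekka--de la Harpe and Howe). Note that the paper does not actually prove this theorem: it merely records it as a consequence of \cite[Lemma 14]{Bachir} and \cite[Lemma 6.2]{howe}, and then remarks that the boundedness assumption on $\mu$ in those references can be dropped either because the argument goes through unchanged for $\sigma$-finite $\mu$, or by replacing $\mu$ with an equivalent finite measure (which changes $\pi$ but not the weak-containment class). Your essential-supremum identity already holds for $\sigma$-finite direct integrals, so your proof handles the general case directly without needing that workaround.
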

Even though it is assumed that $\mu$ is bounded in \cite[Lemma 14]{Bachir}, the proof works for unbounded case as well without any change. On the other hand,  since $Z$ is standard, we can always assume that the measure is bounded, upon passing to one which is finite
and has the same support. This  changes $\pi$ but not the set of irreducible representations weakly contained
in $\pi$.

\subsection{Sobolev spaces and elliptic regularity theorem}\label{sec:17} Let $\pi$ be a unitary representation of a Lie group $G$ with Lie algebra $\mathfrak{g}$ on a
Hilbert space $\mathcal{H}=\mathcal{H}(\pi)$.
\begin{definition}
For $k\in\NN$, $\mathcal{H}^k(\pi)$ consists of all $v\in\mathcal{H}(\pi)$ such that the
$\mathcal{H}$-valued function $g\rightarrow \pi(g)v$ is of class $C^k$ ($\mathcal{H}^0=\mathcal{H}$). For $X\in\mathfrak{g}$, $d\pi(X)$ denotes the infinitesimal generator of the
one-parameter group of operators $t\rightarrow \pi(\exp tX)$, which acts on $\mathcal{H}$ as an essentially skew-adjoint operator. For any $v\in\mathcal{H}$, we also write $Xv:=d\pi(X)v$.
\end{definition}
We shall call $\mathcal{H}^k=\mathcal{H}^k(\pi)$ the space of $k$-times differentiable vectors for $\pi$ or the \emph{Sobolev space} of order $k$. The
following basic properties of these spaces can be found, e.g., in \cite{Nelson} and \cite{Goodman}:
\begin{enumerate}
  \item $\mathcal{H}^k=\bigcap_{m\leq k}D(d\pi(Y_{j_1})\cdots d\pi(Y_{j_m}))$, where $\{Y_j\}$ is a basis for $\mathfrak{g}$, and $D(T)$
denotes the domain of an operator on $\mathcal{H}$.
  \item $\mathcal{H}^k$ is a Hilbert space, relative to the inner product
  \begin{align*}
    \langle v_1,\,v_2\rangle_{G,k}:&=\sum_{1\leq m\leq k}\langle Y_{j_1}\cdots Y_{j_m}v_1,\,Y_{j_1}\cdots Y_{j_m}v_2\rangle+\langle v_1,\,v_2\rangle
  \end{align*}
  \item The spaces $\mathcal{H}^k$ coincide with the completion of the
subspace $\mathcal{H}^\infty\subset\mathcal{H}$ of \emph{infinitely differentiable} vectors with respect to the norm
\begin{align*}
    \norm{v}_{G,k}=\bigl\{\norm{v}^2+\sum_{1\leq m\leq k}\norm{Y_{j_1}\cdots Y_{j_m}v}^2\bigl\}^{\frac{1}{2}}.
  \end{align*}
induced by the inner product in $(2)$. The subspace $\mathcal{H}^\infty$
coincides with the intersection of the spaces $\mathcal{H}^k$ for all $k\geq 0$.

\item $\mathcal{H}^{-k}$, defined as the Hilbert space duals of
the spaces $\mathcal{H}^{k}$, are subspaces of the space $\mathcal{E}(\mathcal{H})$ of distributions, defined as the
dual space of $\mathcal{H}^\infty$.
  \end{enumerate}
We write $\norm{v}_{k}:=\norm{v}_{G,k}$ and $ \langle v_1,\,v_2\rangle_{k}:= \langle v_1,\,v_2\rangle_{G,k}$ if there is no confusion. Otherwise,
we use subscripts to emphasize that the regularity is measured with respect to $G$.

If $G=\RR^n$ and $\mathcal{H}=L^2(\RR^n)$, the square integrable functions on $\RR^n$, then $\mathcal{H}^k$ is the space consisting of all functions on $\RR^n$ whose first $s$ weak derivatives are functions in $L^2(\RR^n)$. In this case, we use the notation $W^k(\RR^n)$ instead of $\mathcal{H}^k$ to avoid confusion. For any open set $\mathcal{O}\subset\RR^n$, $\norm{\cdot}_{(C^r,\mathcal{O})}$ stands for $C^r$ norm for functions having continuous derivatives up to order $r$ on $\mathcal{O}$. We also write $\norm{\cdot}_{C^r}$ if there is no confusion.

We list the well-known elliptic regularity theorem which will be frequently
used in this paper (see \cite[Chapter I, Corollary 6.5 and 6.6]{Robinson}):
\begin{theorem}\label{th:4}
Fix a basis $\{Y_j\}$ for $\mathfrak{g}$ and set $L_{2m}=\sum Y_j^{2m}$, $m\in\NN$. Then
\begin{align*}
    \norm{v}_{2m}\leq C_m(\norm{L_{2m}v}+\norm{v}),
\end{align*}
where $C_m$ is a constant only dependent on $m$ and $\{Y_j\}$.
\end{theorem}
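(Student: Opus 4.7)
This is the classical G{\aa}rding-type elliptic regularity bound for the generalized Laplacian $L_{2m}=\sum_j Y_j^{2m}$ in a unitary representation. I would prove it by induction on $m$, using throughout the essential skew-adjointness of each $Y_j=d\pi(Y_j)$ on $\mathcal{H}^\infty$. Since $(Y_j^m)^*=(-1)^mY_j^m$, the operator $(-1)^mL_{2m}=\sum_j (Y_j^m)^*Y_j^m$ is non-negative self-adjoint, which already yields the ``diagonal'' pure-power bound
\[
\sum_j\norm{Y_j^m v}^2=\abs{\langle L_{2m}v,v\rangle}\leq \tfrac12\bigl(\norm{L_{2m}v}^2+\norm{v}^2\bigr).
\]

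\textbf{Base case $m=1$.} Integration by parts gives first-order control $\sum_j\norm{Y_jv}^2=-\langle L_2v,v\rangle\leq\norm{L_2v}\norm{v}$. For the second-order monomials, expand $\norm{L_2v}^2=\sum_{i,j}\langle Y_i^2v,Y_j^2v\rangle$; using $\langle Y_i^2v,Y_j^2v\rangle=-\langle Y_jY_i^2v,Y_jv\rangle$ together with $Y_jY_i^2-Y_i^2Y_j=Y_i[Y_j,Y_i]+[Y_j,Y_i]Y_i$ and the structure constants $[Y_i,Y_j]=\sum_k c_{ij}^kY_k$, each cross term equals $\norm{Y_iY_jv}^2$ modulo errors involving only first-order derivatives. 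Those errors are absorbed using the first-order bound and AM--GM, giving $\norm{v}_2^2\leq C(\norm{L_2v}^2+\norm{v}^2)$.

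\textbf{Inductive step.} Assume the estimate holds through order $2(m-1)$. Any length-$2m$ word $Y_{j_1}\cdots Y_{j_{2m}}$ can be rewritten, via iterated application of the Poincar\'e--Birkhoff--Witt theorem, as a linear combination of ordered monomials of total degree $2m$ plus elements of strictly lower filtration in $U(\mathfrak{g})$. The top-degree ordered monomials are controlled by expanding $\norm{L_{2m}v}^2=\sum_{i,j}\langle Y_i^{2m}v,Y_j^{2m}v\rangle$ and applying the same integration-by-parts strategy as in the base case, which produces $\norm{Y_i^mY_j^mv}^2$ terms modulo lower-order remainders. The remainders and all lower-filtration contributions are absorbed by the inductive hypothesis.

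\textbf{Main obstacle.} The hardest part is the combinatorial bookkeeping of commutator corrections generated at each integration by parts and at each PBW reordering; one must verify that every correction drops \emph{strictly} in filtration degree, so that the inductive hypothesis applies. The PBW filtration on $U(\mathfrak{g})$ handles this cleanly, and reduces the whole estimate to the symbol-level AM--GM inequality $\abs{\xi_{j_1}\cdots\xi_{j_{2m}}}\leq\sum_j\abs{\xi_j}^{2m}$ in the graded commutative algebra $\mathrm{gr}(U(\mathfrak{g}))\cong S(\mathfrak{g})$, which expresses the (generalized) ellipticity of the principal symbol $\sum\xi_j^{2m}$ of $L_{2m}$.
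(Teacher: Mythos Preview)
The paper does not actually prove Theorem~\ref{th:4}; it is quoted verbatim from \cite[Chapter I, Corollary 6.5 and 6.6]{Robinson} as the ``well-known elliptic regularity theorem,'' so there is no in-paper argument to compare against. Your plan is the right conceptual framework --- skew-adjointness, PBW filtration, and ellipticity of the principal symbol $\sum_j\xi_j^{2m}$ are exactly the ingredients behind Robinson's result --- but the inductive step as written has a genuine gap.

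Expanding $\norm{L_{2m}v}^2=\sum_{i,j}\langle Y_i^{2m}v,Y_j^{2m}v\rangle$ and integrating by parts $m$ times on each side yields, modulo lower order, only the \emph{two-index} blocks $\norm{Y_i^{m}Y_j^{m}v}^2$. When $\dim\mathfrak{g}\geq 3$ this does not directly control a general ordered monomial such as $Y_1^{a_1}\cdots Y_n^{a_n}$ with $\sum a_i=2m$ and three or more nonzero exponents; nor does it give the off-balance two-index terms $Y_i^{a}Y_j^{2m-a}$ with $a\neq m$. You correctly identify the symbol inequality $\lvert\xi_{j_1}\cdots\xi_{j_{2m}}\rvert\leq\sum_j\lvert\xi_j\rvert^{2m}$ as the underlying reason the estimate should hold, but the passage from that commutative inequality in $\mathrm{gr}\,U(\mathfrak{g})\cong S(\mathfrak{g})$ back to operator norms on $\mathcal{H}$ is precisely the content of the G{\aa}rding inequality, and your integration-by-parts scheme does not supply it for the missing monomials. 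One standard fix is to run the induction through the ordinary Laplacian: first show $\norm{v}_{2m}\leq C(\norm{L_2^{m}v}+\norm{v})$ by iterating the $m=1$ case (which your base step does handle, since there every second-order monomial is two-index), and then compare $L_2^{m}$ with $L_{2m}$ modulo lower order using the symbol inequality $(\sum\xi_j^2)^m\leq C\sum\xi_j^{2m}$; alternatively one can invoke the semigroup/analytic-vector machinery as Robinson does.
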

Suppose $\Gamma$ is an
irreducible torsion-free cocompact lattice in $G$. Denote by $\Upsilon$ the regular representation of $G$ on $\mathcal{H}(\Upsilon)=L^2(G/\Gamma)$. Then we have the following subelliptic regularity theorem (see \cite{Spatzier2}):
\begin{theorem}\label{th:5}
Fix $\{Y_j\}$ in $\mathfrak{g}$ such that commutators of $Y_j$ of length at most $r$ span $\mathfrak{g}$. Also set $L_{2m}=\sum Y_j^{2m}$, $m\in\NN$. Suppose $f\in\mathcal{H}(\Upsilon)$ or a distribution on $G/\Gamma$. If $L_{2m}f\in \mathcal{H}(\Upsilon)$ for any $m\in\NN$, then $f\in \mathcal{H}(\Upsilon)$ and satisfies
\begin{align*}
\norm{f}_{\frac{2m}{r}-1}\leq
C_m(\norm{L_{2m}f}+\norm{f}),
\end{align*}
where $C_m$ is a constant only dependent on $m$ and $\{Y_j\}$.
\end{theorem}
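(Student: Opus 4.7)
The plan is to bootstrap Hörmander's classical subelliptic estimate for the sum-of-squares operator $L_2=\sum Y_j^2$ to a parallel estimate for the higher-power operator $L_{2m}=\sum Y_j^{2m}$, and then globalize to $G/\Gamma$ via left-invariance and compactness. The hypothesis that iterated brackets of $\{Y_j\}$ of length at most $r$ span $\mathfrak{g}$ is precisely Hörmander's bracket-generating condition of step $r$, so on a coordinate chart Hörmander's theorem yields the base inequality $\norm{u}_{2/r}\leq C(\norm{L_2u}+\norm{u})$ for smooth compactly supported $u$. Iterating this estimate and commuting $L_2$ past the differentiations (each commutator $[L_2,Y_j]$ is first order, so its further interactions with the $Y_k$'s remain bounded in lower Sobolev norms) produces the $m$-fold version
\begin{align*}
\norm{u}_{2m/r}\leq C_m\bigl(\norm{L_2^mu}+\norm{u}\bigr).
\end{align*}

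The next step is to compare $L_{2m}$ with $L_2^m$ inside the universal enveloping algebra $\mathcal{U}(\mathfrak{g})$. Expanding both sides, their difference is a sum of noncommutative monomials of total degree $2m$ in the $Y_j$; each normal-ordering step $Y_iY_j\mapsto Y_jY_i+[Y_i,Y_j]$ replaces a factor by a bracket, which is an iterated commutator of length at most $r$. Careful bookkeeping shows that every such rearrangement drops the Hörmander-filtered order by at least one, so $L_2^m-L_{2m}$ is a differential operator of filtered order $\leq 2m-1$, which yields
\begin{align*}
\norm{L_2^m u}\leq\norm{L_{2m}u}+C_m\norm{u}_{2m-1}.
\end{align*}

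Combining the two displays and inducting on $m$, the subprincipal term $\norm{u}_{2m-1}$ is absorbed by interpolation between $\norm{L_{2m}u}$ and $\norm{u}$, delivering the advertised bound $\norm{u}_{2m/r-1}\leq C_m(\norm{L_{2m}u}+\norm{u})$ on the chart. Because $L_{2m}$ is left-invariant and $G/\Gamma$ is compact, a finite cover by small charts with a subordinate partition of unity transports the estimate to the regular representation $\Upsilon$ on $L^2(G/\Gamma)$ with uniform constants. Finally, for a distribution $f$ with $L_{2m}f\in\mathcal{H}(\Upsilon)$, mollify via $f_\epsilon:=\Upsilon(\phi_\epsilon)f$ with $\phi_\epsilon$ a smooth compactly supported approximate identity on $G$, apply the inequality to each $f_\epsilon$, and pass to the limit $\epsilon\to 0$; this simultaneously proves hypoellipticity ($f\in\mathcal{H}(\Upsilon)$) and the stated Sobolev bound.

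The main obstacle is the algebraic comparison in the second step, namely verifying that $L_2^m-L_{2m}$ has filtered order at most $2m-1$ in the Hörmander sense. This demands careful tracking of how commutators $[Y_i,Y_j]$ — which in the Hörmander filtration count as order $r$, not $1$ — contribute to the overall degree count when $L_2^m$ is expanded and one attempts to collect the diagonal $Y_j^{2m}$ terms. The one-unit Sobolev loss in the final bound (the $-1$ in $2m/r-1$) is exactly the price for absorbing these subprincipal remainder terms through the inductive interpolation, and the rest of the proof is essentially bookkeeping once this algebraic fact is in place.
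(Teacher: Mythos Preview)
The paper does not prove this theorem; it is quoted verbatim from Katok--Spatzier \cite{Spatzier2} (see the line ``(see \cite{Spatzier2})'' immediately preceding the statement), so there is no in-paper proof to compare your sketch against. Nonetheless, your proposal has a genuine gap at precisely the step you single out as the main obstacle.

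The assertion that $L_2^m-L_{2m}$ has H\"ormander-filtered order at most $2m-1$ is false. Already for $m=2$ one has
\[
L_2^2=\Bigl(\sum_j Y_j^2\Bigr)^2=\sum_{j,k}Y_j^2Y_k^2,\qquad L_4=\sum_j Y_j^4,\qquad L_2^2-L_4=\sum_{j\neq k}Y_j^2Y_k^2,
\]
and each cross term $Y_j^2Y_k^2$ is a monomial of degree exactly $4$ in the generators $Y_\ell$, hence of filtered order $4$. These terms are present in the raw expansion of $L_2^2$ \emph{before} any commutation is performed; your ``normal-ordering'' bookkeeping would be relevant for reordering a fixed monomial into a canonical form, but here the issue is simply that $L_{2m}$ omits the entire off-diagonal part of $L_2^m$. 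Consequently the displayed inequality $\norm{L_2^m u}\leq\norm{L_{2m}u}+C_m\norm{u}_{2m-1}$ does not follow from the argument you give, and the induction that rests on it collapses.

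What actually drives the Katok--Spatzier estimate is the elementary integration-by-parts identity (using that each $Y_j$ is skew-adjoint in $\mathcal{H}(\Upsilon)$)
\[
(-1)^m\langle L_{2m}u,\,u\rangle=\sum_j\norm{Y_j^m u}^2,
\]
which yields $\sum_j\norm{Y_j^m u}^2\leq\norm{L_{2m}u}\,\norm{u}$ directly, without ever passing through $L_2^m$. One then feeds the control of each $\norm{Y_j^m u}$ into the base H\"ormander subelliptic estimate and interpolates; the $-1$ loss in $2m/r-1$ comes from this interpolation step, not from absorbing a nonexistent lower-order remainder $L_2^m-L_{2m}$. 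If you want to rebuild the argument, start from that identity rather than from a comparison with $L_2^m$.
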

\begin{remark}
The elliptic regularity theorem is a general property, while the subelliptic regularity theorem can't be applied without adopting extra
assumption. For example, if $G/\Gamma$ is non-compact then the above theorem fails.
\end{remark}
\subsection{Direct decompositions of Sobolev space}\label{sec:3}
For any Lie group $G$ of type $I$ and its unitary representation $\rho$, there is a decomposition of $\rho$ into a direct integral
\begin{align}\label{for:66}
 \rho=\int_Z\rho_zd\mu(z)
\end{align}
of irreducible unitary representations for some measure space $(Z,\mu)$ (we refer to
\cite[Chapter 2.3]{Zimmer} or \cite{Margulis} for more detailed account for the direct integral theory). All the operators in the enveloping algebra are decomposable with respect to the direct integral decomposition \eqref{for:66}. Hence there exists for all $s\in\RR$ an induced direct
decomposition of the Sobolev spaces:
\begin{align}\label{for:67}
\mathcal{H}^s=\int_Z\mathcal{H}_z^sd\mu(z)
\end{align}
with respect to the measure $d\mu(z)$.

The existence of the direct integral decompositions
\eqref{for:66}, \eqref{for:67} allows us to reduce our analysis of the
cohomological equation to irreducible unitary representations. This point of view is
essential for our purposes.
\subsection{The Fourier transform}
Let $\mathcal{N}$ be a locally compact abelian group with a Haar measure $d\mathfrak{n}$ and denote by $\widehat{\mathcal{N}}$ its dual group.  The Fourier transform of $L^1(\mathcal{N})$ is obtained by restriction:
\begin{align*}
    \widehat{f}(\chi)=\int_{\mathcal{N}}f(\mathfrak{n})\overline{\chi(\mathfrak{n})}d\mathfrak{n},\qquad f\in L^1(\mathcal{N}),
\end{align*}
the bar denoting complex conjugation. In particular, $\widehat{f}$ belongs to $C_0(\widehat{\mathcal{N}})$ for all $f\in L^1(\mathcal{N})$, where $C_0(\mathcal{N})$ is the space of complex-valued continuous functions vanishing at infinity \cite[pg. 93]{Folland1}. The space of functions $\mathcal{S}(\mathcal{N})$,
known as the \emph{Schwartz-Bruhat space} of $\mathcal{N}$ (rapidly decreasing functions on $\mathcal{N}$), is defined such that it has the property: the Fourier transform induces a topological isomorphism
$\mathcal{S}(\mathcal{N})\cong \mathcal{S}(\widehat{\mathcal{N}})$.
\begin{theorem} \label{th:1}For a suitable normalization of the dual Haar measure $d\widehat{\mathfrak{n}}$ on $\widehat{\mathcal{N}}$, we have:
\begin{enumerate}
  \item  The Fourier transform $f\rightarrow \widehat{f}$ from $L^1(\mathcal{N})\bigcap L^2(\mathcal{N})$ to $L^2(\widehat{\mathcal{N}})$ extends to an isometry from $L^2(\mathcal{N})$ onto $L^2(\widehat{\mathcal{N}})$.
      \medskip
  \item  If $f\in L^1(\mathcal{N})$ and $\widehat{f}\in L^1(\widehat{\mathcal{N}})$, then
for almost every $\mathfrak{n}\in \mathcal{N}$, $f(\mathfrak{n})=\int_{\widehat{\mathcal{N}}}\chi(\mathfrak{n})\widehat{f}(\chi)d\chi$.
\medskip

\item  Every $\mathfrak{n}\in\mathcal{N}$ defines a unitary character $\eta(\mathfrak{n})$ on $\widehat{\mathcal{N}}$ by the formula
$\eta(\mathfrak{n})(\chi)=\chi(\mathfrak{n})$ for any $\chi\in \widehat{\mathcal{N}}$. The canonical group homomorphism
$\eta:\mathcal{N}\rightarrow \widehat{\widehat{\mathcal{N}}} $ is an isomorphism of topological groups.
\end{enumerate}
\end{theorem}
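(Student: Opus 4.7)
The plan is to prove the three parts in an interconnected way, following the standard development of harmonic analysis on locally compact abelian groups: first use Bochner's theorem together with convolution to obtain the inversion formula (2) up to a normalization of Haar measure on $\widehat{\mathcal{N}}$; then specialize inversion at the identity of $\mathcal{N}$ to deduce the Plancherel isometry (1); and finally bootstrap by applying Plancherel on $\widehat{\mathcal{N}}$ to establish Pontryagin duality (3).

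For (2), I would work with the subspace $B(\mathcal{N}) \subset L^1(\mathcal{N})$ consisting of those $f$ whose Fourier transform lies in $L^1(\widehat{\mathcal{N}})$. The distinguished dense family is provided by convolutions $f = g \ast \widetilde{g}$ with $g \in L^1(\mathcal{N}) \cap L^2(\mathcal{N})$ and $\widetilde{g}(\mathfrak{n}) = \overline{g(-\mathfrak{n})}$; such $f$ are continuous, positive-definite, and satisfy $\widehat{f} = \abs{\widehat{g}}^2 \geq 0$. Bochner's theorem produces a positive finite measure $\nu_f$ on $\widehat{\mathcal{N}}$ with $f$ as its Fourier--Stieltjes transform, and the assignment $\abs{\widehat{g}}^2 \mapsto \nu_f$ extends to a positive, translation-invariant linear functional on a dense subspace of $C_c(\widehat{\mathcal{N}})$. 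Uniqueness of Haar measure forces this functional to be a constant multiple of Haar measure on $\widehat{\mathcal{N}}$, and I would fix the normalization of $d\widehat{\mathfrak{n}}$ precisely so that the constant equals $1$. This yields pointwise inversion on the dense family, and a standard approximate-identity argument extends it to all $f \in B(\mathcal{N})$, establishing (2). Setting $\mathfrak{n}=0$ in inversion applied to $g \ast \widetilde{g}$ yields $\int_{\mathcal{N}} \abs{g}^2 \, d\mathfrak{n} = \int_{\widehat{\mathcal{N}}} \abs{\widehat{g}}^2 \, d\chi$, so the Fourier transform is isometric on the dense subspace $L^1(\mathcal{N}) \cap L^2(\mathcal{N}) \subset L^2(\mathcal{N})$ and extends uniquely to an isometry into $L^2(\widehat{\mathcal{N}})$; its image is closed (isometry on a complete space) and dense (functions of the form $\abs{\widehat{g}}^2$ span a dense subset of $L^1(\widehat{\mathcal{N}})$), which proves (1).

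For (3), the map $\eta : \mathcal{N} \to \widehat{\widehat{\mathcal{N}}}$ is a continuous group homomorphism by construction. Injectivity follows because characters of $\mathcal{N}$ separate points: if $\chi(\mathfrak{n}) = 1$ for all $\chi \in \widehat{\mathcal{N}}$ and some $\mathfrak{n} \neq 0$, then $\widehat{g}$ would be invariant under the translation action implementing $\mathfrak{n}$ for every $g \in L^2(\mathcal{N})$, which contradicts the Plancherel isometry just established. For surjectivity, the strategy is to apply the Plancherel theorem on the dual group $\widehat{\mathcal{N}}$: any $\Xi \in \widehat{\widehat{\mathcal{N}}}$ induces, via its action on $L^2(\widehat{\mathcal{N}}) \cong L^2(\mathcal{N})$, a translation-equivariant unitary multiplier, and by the classification of such multipliers it must agree with evaluation at a unique point $\mathfrak{n}_0 \in \mathcal{N}$, i.e.\ $\Xi = \eta(\mathfrak{n}_0)$. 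The hardest part will be precisely this surjectivity argument, which requires combining the Plancherel measure with the regularity of the Fourier transform and a careful weak-$\ast$ compactness/limit analysis for the character group topology. Once $\eta$ is a continuous bijective homomorphism between locally compact Hausdorff groups, the open mapping theorem for such groups automatically upgrades it to an isomorphism of topological groups, completing (3).
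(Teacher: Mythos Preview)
The paper does not prove this theorem at all: it is stated in the preliminaries as classical background, and immediately after the statement the author simply names the three parts (Plancherel's Theorem, Fourier Inversion Theorem, Pontrjagin's Duality Theorem) and moves on, relying implicitly on standard references such as \cite{Folland1}. So there is no proof in the paper to compare your proposal against.

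That said, your outline is essentially the standard textbook development (as in Folland or Rudin): Bochner plus uniqueness of Haar measure to fix the dual measure and get inversion on a dense class, inversion at the identity to get Plancherel, and then Plancherel plus separation of points to get Pontrjagin duality. The sketch is broadly sound for the purposes of a reading course. One point worth tightening is your surjectivity argument for $\eta$: the usual route does not go through classifying ``translation-equivariant unitary multipliers'' but rather shows that the image $\eta(\mathcal{N})$ is a locally compact (hence closed) subgroup of $\widehat{\widehat{\mathcal{N}}}$ and that it separates points of $\widehat{\mathcal{N}}$, so the annihilator of $\eta(\mathcal{N})$ in $\widehat{\mathcal{N}}$ is trivial, forcing $\eta(\mathcal{N})=\widehat{\widehat{\mathcal{N}}}$. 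Your multiplier idea can be made to work but is more roundabout than necessary.
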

$(1)$, $(2)$ and $(3)$ in above theorem are called Plancherel's Theorem, Fourier Inversion Theorem and Pontrjagin's Duality Theorem respectively. From Plancherel's Theorem, we see that Fourier Inversion Theorem extends to $L^2(\mathcal{N})$. We can and will always identify $\widehat{\widehat{\mathcal{N}}}$ with $\mathcal{N}$ and will take the normalized dual Haar measure $d\widehat{\mathfrak{n}}$ on $\widehat{\mathcal{N}}$ (relative to $d\mathfrak{n}$ on $\mathcal{N}$).
\subsection{Group algebra of locally compact groups} \label{sec:12}Let
 $S$ be a locally compact group, with a left invariant Haar measure $ds$. The \emph{convolution } $f_1\ast f_2$ of two functions $f_1,\,f_2\in L^1(S)$ is defined by
 \begin{align*}
f_1\ast f_2(h)=\int_Sf_1(s)f_2(s^{-1}h)ds.
 \end{align*}
 The group convolution algebra $L^1(S)$, equipped with the involution $f\rightarrow f^\ast$, where
 \begin{align*}
    f^\ast(s)=\delta_S(s^{-1})\overline{f}^\vee(s),\qquad \forall s\in S,
 \end{align*}
$\delta_S$ denoting the modular function of group $S$ and $^\vee$ denoting reflection ($f^\vee(s)=f(s^{-1})$ for all $s\in S$), is a Banach $^\ast$-algebra.

 Let $\pi$ be a unitary representations  of $S$ on a Hilbert space $\mathcal{H}$ with inner product $\langle\,,\,\rangle$. The representation of $\pi$ extends to a $^\ast$-representation of $L^1(S)$: for any $f_1,\,f_2\in L^1(S)$
 \begin{align}\label{for:85}
 \pi(f_1\ast f_2)=\pi(f_1)\pi(f_2)\quad\text{ and }\quad\pi(f^\ast)=\pi(f)^\ast
 \end{align}
where $\pi(f)^\ast$ denotes the adjoint operator of $\pi(f^\ast)$ and $\pi(f)$ is the operator on $\mathcal{H}$ for which
\begin{align*}
\langle \pi(f)v,w\rangle=\int_{S}f(s)\langle \pi(s)v,w\rangle ds, \qquad \forall v,\,w\in \mathcal{H}
\end{align*}
for any $f\in L^1(S)$.

In particular, for the left regular representation $\Upsilon$, $\Upsilon(f)$ is the operator of left convolution by $f$ on $L^2(S)$: $\Upsilon(f)g=f\ast g$ for any $g\in L^2(S)$.

Let $\mathcal{N}$ be an abelian closed subgroup of $S$. For $\xi,\,\eta\in \mathcal{H}$, consider
the corresponding matrix coefficient of $\pi\mid_\mathcal{N}$:
\begin{align*}
\phi_{\xi,\eta}(\mathfrak{n})=\langle \pi(\mathfrak{n})\xi,\,\eta\rangle,\quad\text{ for any }\mathfrak{n}\in \mathcal{N}.
\end{align*}
There exists a regular Borel measure $\mu$ on $\widehat{\mathcal{N}}$, called \emph{the associated measure of $\pi$ (with respect to $\widehat{\mathcal{N}}$)}, such that $\xi=\int_{\widehat{\mathcal{N}}}\xi_\chi d\mu(\chi)$, and
\begin{align}\label{for:84}
    \phi_{\xi,\eta}(\mathfrak{n})&=\int_{\widehat{\mathcal{N}}}\chi(\mathfrak{n})\langle \xi_\chi,\,\eta_\chi\rangle d\mu(\chi).
\end{align}
The representation $\pi\mid_\mathcal{N}$ extends to a $^\ast$-representation on $\mathcal{S}(\widehat{\mathcal{N}})$: for any $f\in \mathcal{S}(\widehat{\mathcal{N}})$, $\widehat{\pi}(f)$ is the operator on $\mathcal{H}$ for which
\begin{align*}
\bigl\langle \widehat{\pi}(f)\xi,\eta\bigl\rangle=\int_{\mathcal{N}}\bigl\langle\widehat{f}(\mathfrak{n})\pi(\mathfrak{n})\xi,\eta\bigl\rangle d\mathfrak{n}, \qquad \forall \xi,\,\eta\in \mathcal{H}.
\end{align*}
Then we have
\begin{align}\label{for:89}
\bigl\langle \widehat{\pi}(f)\xi,\eta\bigl\rangle&=\int_{\mathcal{N}}\bigl\langle\widehat{f}(\mathfrak{n})\pi(\mathfrak{n})\xi,\eta\bigl\rangle d\mathfrak{n}
\overset{\text{(1)}}{=}\int_{\mathcal{N}}\int_{\widehat{\mathcal{N}}}\widehat{f}(\mathfrak{n})\chi(\mathfrak{n})\langle \xi_\chi,\,\eta_\chi \rangle d\mu(\chi) d\mathfrak{n}\notag\\
&=\int_{\widehat{\mathcal{N}}}\langle \xi_\chi,\,\eta_\chi \rangle\int_{\mathcal{N}}\widehat{f}(\mathfrak{n})\chi(\mathfrak{n}) d\mathfrak{n}d\mu(\chi)
\overset{\text{(2)}}{=}\int_{\widehat{\mathcal{N}}}f(\chi)\langle \xi_\chi,\,\eta_\chi \rangle d\mu(\chi).
\end{align}
$(1)$ follows from \eqref{for:84} and $(2)$ holds by using Fourier Inversion Theorem. Then we have
\begin{align*}
\norm{\widehat{\pi}(f)}\leq \norm{f}_\infty,\qquad \forall f\in \mathcal{S}(\widehat{\mathcal{N}}),
\end{align*}
which allows us to extend $\widehat{\pi}$ from $\mathcal{S}(\widehat{\mathcal{N}})$ to $L^\infty(\widehat{\mathcal{N}})$  by taking strong limits of operators and pointwise monotone increasing limits of non-negative functions (see \cite{Lang} for a detailed treatment). Hence $\widehat{\pi}$ is a homomorphism of $L^\infty(\widehat{\mathcal{N}})$ to bounded operators on $\mathcal{H}$.
\begin{lemma}\label{le:10}
Suppose $(\pi,\mathcal{H})$ is isomorphic to another unitary representation $(\pi_1,\mathcal{H}_1)$ and the isomorphism is $\mathcal{I}$, then
\begin{enumerate}
\item $\mathcal{I}\bigl(\widehat{\pi}(f)(v)\bigl)=\widehat{\pi}(f)(\mathcal{I}v)$ for any $f\in L^\infty(\widehat{\mathcal{N}})$ and $v\in \mathcal{H}$;
\medskip
\item if the associated measures of $\pi$ and $\pi_1$ are Lebesgue measures and $\xi=\int \xi_\chi d\chi\in\mathcal{H}$ and $\mathcal{I}(\xi)=\int \xi'_\chi d\chi\in\mathcal{H}_1$, then $\norm{\xi_\chi}=\norm{\xi'_\chi}$ for almost all $\chi$.
\end{enumerate}
\end{lemma}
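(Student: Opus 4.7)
The plan is to derive both statements from the defining intertwining property of the isomorphism $\mathcal{I}$, pushing it through the two-step construction of $\widehat{\pi}$ recalled in Section~\ref{sec:12}.

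For part (1), I would first establish the identity for $f\in\mathcal{S}(\widehat{\mathcal{N}})$. Being a unitary equivalence, $\mathcal{I}$ satisfies $\mathcal{I}\pi(\mathfrak{n})=\pi_1(\mathfrak{n})\mathcal{I}$ for every $\mathfrak{n}\in\mathcal{N}$. Since $\widehat{\pi}(f)$ is defined, by the formulas preceding \eqref{for:89}, as the weak integral of $\widehat{f}(\mathfrak{n})\pi(\mathfrak{n})$, pairing with an arbitrary $w\in\mathcal{H}_1$ and commuting $\mathcal{I}$ past $\pi(\mathfrak{n})$ under the integral yields
\begin{align*}
\bigl\langle \mathcal{I}\,\widehat{\pi}(f)v,w\bigl\rangle
=\int_{\mathcal{N}}\widehat{f}(\mathfrak{n})\bigl\langle\pi_1(\mathfrak{n})\mathcal{I}v,w\bigl\rangle\,d\mathfrak{n}
=\bigl\langle \widehat{\pi_1}(f)\mathcal{I}v,w\bigl\rangle.
\end{align*}
To extend to $f\in L^\infty(\widehat{\mathcal{N}})$, I would invoke the approximation procedure recalled just before the lemma: $\widehat{\pi}$ is extended from $\mathcal{S}(\widehat{\mathcal{N}})$ by strong operator limits (equivalently, by pointwise monotone limits of non-negative functions). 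Since $\mathcal{I}$ is bounded, it commutes with such strong limits when applied to a fixed vector, so the intertwining identity persists on all of $L^\infty(\widehat{\mathcal{N}})$.

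For part (2), I would apply (1) with $f=\mathbf{1}_E$, the characteristic function of an arbitrary Borel subset $E\subset\widehat{\mathcal{N}}$. Using the unitarity of $\mathcal{I}$ and the intertwining identity from (1),
\begin{align*}
\bigl\langle\widehat{\pi}(\mathbf{1}_E)\xi,\xi\bigl\rangle
=\bigl\langle\mathcal{I}\,\widehat{\pi}(\mathbf{1}_E)\xi,\mathcal{I}\xi\bigl\rangle
=\bigl\langle\widehat{\pi_1}(\mathbf{1}_E)\mathcal{I}\xi,\mathcal{I}\xi\bigl\rangle.
\end{align*}
Evaluating both sides by \eqref{for:89}, and using that the associated measures on $\widehat{\mathcal{N}}$ for $\pi$ and $\pi_1$ are both Lebesgue, gives
\begin{align*}
\int_E\norm{\xi_\chi}^2\,d\chi=\int_E\norm{\xi'_\chi}^2\,d\chi
\end{align*}
for every Borel $E$. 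Since $E$ is arbitrary, $\norm{\xi_\chi}=\norm{\xi'_\chi}$ for almost every $\chi$, which is (2).

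The only mildly delicate step is the extension in Step~1 from $\mathcal{S}(\widehat{\mathcal{N}})$ to $L^\infty(\widehat{\mathcal{N}})$: one must check that the approximation scheme used to define $\widehat{\pi}$ on $L^\infty$ genuinely commutes with the bounded operator $\mathcal{I}$. This follows from the uniform bound $\norm{\widehat{\pi}(f)}\le\norm{f}_\infty$ (valid for both $\pi$ and $\pi_1$) combined with the standard observation that a bounded operator preserves strong limits of uniformly bounded operators on a fixed vector. Everything else is bookkeeping with \eqref{for:89}.
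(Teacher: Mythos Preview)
Your proof is correct. Part~(1) matches the paper, which simply declares it ``clear from the definition''; you have spelled out the approximation argument, which is fine.

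For part~(2) you take a genuinely different and somewhat cleaner route than the paper. The paper applies~(1) with $f=X_{B(\lambda,r)}$, the characteristic function of a ball of radius $r$ centered at $\lambda$, and then invokes the Lebesgue differentiation theorem: at any common Lebesgue point $\lambda$ of $\chi\mapsto\norm{\xi_\chi}^2$ and $\chi\mapsto\norm{\xi'_\chi}^2$, the two averages over $B(\lambda,r)$ converge to $\norm{\xi_\lambda}^2$ and $\norm{\xi'_\lambda}^2$ respectively, and these averages are equal by~(1). You instead apply~(1) with $f=\mathbf{1}_E$ for an \emph{arbitrary} Borel set $E$, obtaining $\int_E\norm{\xi_\chi}^2\,d\chi=\int_E\norm{\xi'_\chi}^2\,d\chi$ for all $E$, and conclude directly from the standard fact that two integrable functions with equal integrals over every Borel set agree almost everywhere. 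Your argument avoids the Lebesgue differentiation theorem entirely and works without any metric structure on $\widehat{\mathcal{N}}$; the paper's approach, by contrast, gives an explicit full-measure set (the common Lebesgue points) on which the identity holds pointwise, which could in principle be useful if one later needed such control.
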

\begin{proof}
$(1)$ is clear from the definition. We just need to show $(2)$. For any Borel set $B\subset \widehat{\mathcal{N}}$, let $X_B$ denote the characteristic function of $B$. Use $L_\xi$ (resp. $L_{\mathcal{I}(\xi)}$) to denote the set of Lebesgue points of the function: $\chi\rightarrow \norm{\xi_\chi}$ (resp. $\chi\rightarrow \norm{\xi'_\chi}$).

Let $B(\lambda,r)$ denote the ball centered at $\lambda$ with radius $r$. Then by using  \eqref{for:89} and  Lebesgue differentiation theorem (see Theorem 7.7 of \cite{Rudin}) for any $\lambda\in L_\xi\bigcap L_{\mathcal{I}(\xi)}$ we have
\begin{align*}
    \norm{\xi_\lambda}^2&=\lim_{r\rightarrow 0}\frac{1}{\mu(B_r)}\int_{B(\lambda,r)}\norm{\xi_y}^2d\mu(y)=\lim_{r\rightarrow 0}\frac{1}{\mu(B_r)}\norm{\widehat{\pi}(X_{B(\lambda,r)})(\xi)}^2\\
    &\overset{\text{(*)}}{=}\lim_{r\rightarrow 0}\frac{1}{\mu(B_r)}\norm{\widehat{\pi}(X_{B(\lambda,r)})(\mathcal{I}\xi)}^2=\lim_{r\rightarrow 0}\frac{1}{\mu(B_r)}\int_{B(\lambda,r)}\norm{\xi'_y}^2d\mu(y)\\
    &=\norm{\xi'_\chi}^2.
\end{align*}
$(*)$ holds since it is a special case of $(1)$. Then we finish the proof.
\end{proof}
\begin{remark}\label{re:3}
For any unitarily equivalent representations $(\pi,\mathcal{H})$ and $(\pi_1,\mathcal{H}_1)$ over $\mathcal{N}$, the associated measures are
absolutely continuous with respect to each other (see \cite[Proposition 2.3.3]{Zimmer}). Hence if one of the associated measures is the Lebesgue measure, so is the other up to an isomorphism.
\end{remark}
\section{Explicit calculations based on Mackey theory}\label{sec:20}
\subsection{Dual action of  $SL(n,\RR)$ on $\RR^n$ for regular representation} Recall notations in Section \ref{sec:24}. Let $H=SL(n,\RR)$, $n\geq 2$, $\mathcal{N}= \RR^n$ and $S=H\ltimes\RR^n$. The action of $H$ on $\RR^n$ is
given by usual matrix multiplication. The group composition law is
\begin{align*}
(g_1,v_1)(g_2,v_2)=(g_1g_2,g_2^{-1}v_1+v_2).
\end{align*}
The dual group $\widehat{\RR^n}$ of $\RR^n$ can be identified with
$\RR^n$ as follows. Fix a unitary character $\zeta$ of the additive group of $\RR$ distinct from the unit character. The mapping
\begin{align*}
\RR^n\rightarrow \widehat{\RR^n},\qquad v\rightarrow \zeta_v
\end{align*}
is a topological group isomorphism, where $\zeta_v(x)$ is defined by $e^{xv\sqrt{-1}}$ (see. \cite[Ch II-5, Theorem 3]{weil}). Under this
identification, the dual action of $H$ on $\widehat{\RR^n}$ corresponds
to the standard adjoint $H$ action $(\rho(g)^{-1})^\tau$ on $\RR^n$.
Therefore the actions of $H$ in $\widehat{\RR^n}$ are algebraic and hence the $H$-orbits on $\widehat{\RR^n}$ are locally
closed \cite{Zimmer}. There are only two orbits of  $S$ acting on $\RR^n$, namely the origin and its
complement. If $\pi$ is an irreducible unitary representation of $S$ such that
$\pi\mid_{\RR^n}(x)=\int_{\widehat{\RR^n}}\chi(x) d\mu(\chi)$ with $\mu$ supported on the origin then $\pi\mid_{\RR^n}$ is trivial, and hence $\pi$
factors to a representation of $H$. If $\mu$ is supported on the complement, then there is no non-trivial $\RR^n$-vectors, and hence $\pi$ is an induced representation.

\subsection{Unitary dual of $S=SL(2,\RR)\ltimes \RR^2$\label{sec:2} of no non-trivial $\RR^2$-fixed vectors}Write $S$ in the form
$\begin{pmatrix}[cc|c]
  a & b & v_1\\
  c & d & v_2
\end{pmatrix}$, where $\begin{pmatrix}
  a & b \\
  c & d
\end{pmatrix}\in SL(2,\RR)$ and $\begin{pmatrix}
  v_1 \\
  v_2
\end{pmatrix}\in \RR^2$. The discussion in previous part shows that for the vector $\begin{pmatrix}
  0 \\
  1
\end{pmatrix}$, its stabilizer is isomorphic to the Heisenberg group
\begin{align*}
N=\Bigl\{\begin{pmatrix}[cc|c]
  1 & x & v_1\\
  0 & 1 & v_2
\end{pmatrix}:\,x,\,v_1,\,v_2\in\RR\Bigl\}.
\end{align*}
Since $SL(2,\RR)\ltimes\RR^2/N$ is isomorphic to $\RR^2\backslash(0,0)$, we choose a Borel section $\Lambda:SL(2,\RR)\ltimes\RR^2/N\rightarrow SL(2,\RR)\ltimes\RR^2$ given by $\Lambda(x,y)=\begin{pmatrix}x & 0 \\
y & x^{-1} \\
 \end{pmatrix}$.  The
action of the group on the cosets is
\begin{align*}
g^{-1}\Lambda(x,y)&=\Lambda(dx-by,ay-cx)\begin{pmatrix}
  1 & \frac{-bx^{-1}}{dx-by} \\
  0 & 1
\end{pmatrix}
\end{align*}
where $g=\begin{pmatrix}
  a & b \\
  c & d
\end{pmatrix}$. The action of the group on the section $\Lambda$ is:
\begin{align*}
    &\Lambda(x,y)^{-1}(g,v)\Lambda\bigl((g,v)^{-1}(x,y)\bigl)\\
    &=\bigg(\begin{pmatrix}1 & \frac{bx^{-1}}{dx-by} \\
0 & 1\\
 \end{pmatrix},\begin{pmatrix}v_1(dx-by)^{-1} \\
v_2(dx-by)-v_1(ay-cx)\\
 \end{pmatrix}\bigg)
\end{align*}
where $v=\begin{pmatrix}
  v_1 \\
  v_2
\end{pmatrix}$. Since the  irreducible representations of $S$ with $\mu$ supported on the orbit of $\begin{pmatrix}
  0 \\
  1
  \end{pmatrix}$ are induced from irreducible representations on $N$ which is isomorphic to $\RR$, by using Theorem \ref{th:1} we have
\begin{lemma}\label{le:1}
The irreducible representations of $SL(2,\RR)\ltimes \RR^2$ without non-trivial $\RR^2$-fixed vectors are parameterized by $t\in\RR$ and the group action is defined by
\begin{gather*}
\rho_t: SL(2,\RR)\ltimes\RR^2\rightarrow \mathcal{B}(\mathcal{H}_t)\\
\rho_{t}(v)f(x,y)=e^{(v_2x-v_1y)\sqrt{-1}}f(x,y),\\
\rho_{t}(g)f(x,y)=e^{\frac{bt\sqrt{-1}}{x(dx-by)}}f(dx-by,-cx+ay);
 \end{gather*}
 and
 \begin{align*}
  \norm{f}_{\mathcal{H}_t}=\norm{f}_{L^2(\RR^2)},
 \end{align*}
 where $(g,v)=\Big(\begin{pmatrix}a & b \\
c & d\\
 \end{pmatrix},\begin{pmatrix}v_1 \\
v_2\\
 \end{pmatrix}\Big)\in SL(2,\RR)\ltimes \RR^2$.

 We choose a basis for the Lie algebra of $\mathfrak{sl}(2,\RR)$ as in \eqref{for:4}
 and a basis for $\RR^2$ to be $Y_1=\begin{pmatrix}1 \\
0\end{pmatrix}$ and $Y_2=\begin{pmatrix}0 \\
1\end{pmatrix}$. Then we get
\begin{gather}\label{for:107}
 X=-x\partial_x+y\partial_y,\quad U=t_0x^{-2}\sqrt{-1}-y\partial_x,\quad V=-x\partial_y\notag\\
 Y_1=-y\sqrt{-1},\quad Y_2=x\sqrt{-1}.
 \end{gather}

\end{lemma}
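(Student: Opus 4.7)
The plan is to apply Mackey's theorem (Theorem~\ref{th:1}) with normal abelian subgroup $\mathcal{N}=\RR^2$, and then to realize each induced representation explicitly on $L^2(\RR^2)$ using the coset data already prepared in the excerpt. Via the identification $\widehat{\RR^2}\cong\RR^2$ recalled just above the lemma, the dual $SL(2,\RR)$-action coincides with the standard action $(\rho(g)^{-1})^\tau$, whose orbits are $\{0\}$ and $\RR^2\setminus\{0\}$; both are locally closed, and the no-$\RR^2$-invariant-vector hypothesis concentrates the spectral measure on the latter orbit.

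Pick $\chi_0$ as the character corresponding to $(0,1)^\tau$; then $S_{\chi_0}=N$ as computed in the excerpt. The Lie algebra of $N$ is spanned by $U,Y_1,Y_2$ with the sole nonzero bracket $[U,Y_2]=Y_1$, so $N$ is a Heisenberg group with center $\exp(\RR Y_1)$. For any irreducible unitary $\sigma$ of $N$ with $\sigma|_{\RR^2}=(\dim)\chi_0$, the central element $\exp(rY_1)$ acts as $\chi_0(rY_1)I=I$, so $\sigma$ descends to the abelian quotient $N/\exp(\RR Y_1)$ and is hence one-dimensional. Such a $\sigma$ is determined by a single real parameter $t$ (its value on $U$), namely $\sigma_t(\exp sU)=e^{st\sqrt{-1}}$ together with $\sigma_t|_{\RR^2}=\chi_0$. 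Mackey's theorem then identifies the representations in question as $\rho_t:=\text{Ind}_N^S(\sigma_t)$, $t\in\RR$.

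Using the Borel section $\Lambda$, the homogeneous space $S/N$ identifies with $\RR^2\setminus\{0\}$; since $SL(2,\RR)$ acts on $\RR^2$ with Jacobian $1$ and $\RR^2$ acts trivially on $S/N$, Lebesgue measure is $S$-invariant, justifying $\mathcal{H}_t=L^2(\RR^2)$. Substituting the decomposition of $\Lambda(x,y)^{-1}(g,v)\Lambda((g,v)^{-1}(x,y))$ already displayed in the excerpt into the Mackey formula $(\rho_t(g,v)f)(x,y)=\sigma_t(n)f((g,v)^{-1}(x,y))$ and evaluating $\sigma_t$ on the unipotent factor gives the phase $e^{bt\sqrt{-1}/(x(dx-by))}$, while evaluating $\chi_0$ on the $\RR^2$-component $(v_1(dx-by)^{-1},\,v_2(dx-by)-v_1(ay-cx))^\tau$ specializes at $g=I$ to $e^{(xv_2-yv_1)\sqrt{-1}}$; together these reproduce the formulas for $\rho_t(v)$ and $\rho_t(g)$ in the lemma. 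The infinitesimal formulas in~\eqref{for:107} then follow by differentiating the group action at the identity along the one-parameter subgroups $\exp(sX),\exp(sU),\exp(sV),\exp(sY_1),\exp(sY_2)$; in particular, the $U$-derivative requires computing $\frac{d}{ds}\bigl|_{s=0}st\sqrt{-1}/(x(x-sy))=t\sqrt{-1}/x^2$, yielding the expected term $t_0x^{-2}\sqrt{-1}$.

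The only conceptually nontrivial point is the classification of irreducible unitary $\sigma$'s of the Heisenberg-type stabilizer $N$ that restrict to $(\dim)\chi_0$: the key observation is that $\chi_0$ annihilates the central direction $Y_1=[U,Y_2]$, forcing $\sigma$ to factor through the two-dimensional abelian quotient and hence to be one-dimensional. Given this, the remainder is a mechanical substitution into the coset formulas already provided in the excerpt, plus routine differentiation for the infinitesimal formulas.
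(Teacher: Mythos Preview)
Your proposal is correct and follows essentially the same Mackey-theoretic route as the paper: same orbit analysis, same stabilizer $N$, same Borel section $\Lambda$, and the same coset computations (already displayed in the text preceding the lemma) plugged into the induced-representation formula. The one point you make explicit that the paper leaves as a one-line assertion is why the relevant irreducible representations of the Heisenberg stabilizer $N$ with $\sigma|_{\RR^2}=(\dim)\chi_0$ are one-dimensional and hence parameterized by $t\in\RR$; your observation that $\chi_0$ kills the center $\exp(\RR Y_1)=[N,N]$, forcing $\sigma$ to factor through the abelian quotient, is exactly the missing justification behind the paper's phrase ``irreducible representations on $N$ which is isomorphic to $\RR$.''
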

\subsection{Unitary dual of $S=(SL(2,\RR)\ltimes \RR^2)\ltimes\RR^3$ of no non-trivial $\RR^2$-fixed vectors}\label{sec:10}
We consider the group $(SL(2,\RR)\ltimes \RR^2)\ltimes\RR^3$ which can be expressed in the form $\begin{pmatrix}[ccc|c]
  a & b & u_1& v_1\\
c & d & u_2& v_2\\
0 & 0 & 1 & v_3\\
 \end{pmatrix}$, where $\begin{pmatrix}
  a & b\\
c &  d\\
 \end{pmatrix}\in SL(2,\RR)$, $\begin{pmatrix}u_1 \\
u_2\\
\end{pmatrix}\in\RR^2$ and $\begin{pmatrix}v_1 \\
v_2\\
v_3\\
 \end{pmatrix}\in\RR^3$. Let $L=\begin{pmatrix}v_1 \\
v_2\\
0\\
 \end{pmatrix}\in\RR^3$ be the rank two subgroup of $\RR^3$. Note that $H=SL(2,\RR)\ltimes \RR^2$ and its action on $\RR^3$ is the restriction of the standard representation of $SL(3,\RR)$ on $\RR^3$. We choose a basis for the Lie algebra of $\mathfrak{sl}(2,\RR)\ltimes \RR^2$ to be
\begin{align*}
X&=\begin{pmatrix}1 & 0 & 0\\
0 & -1& 0\\
0 & 0 & 0\\
 \end{pmatrix}&\quad&
 U_1=\begin{pmatrix}0 & 1 & 0\\
0 & 0& 0\\
0 & 0 & 0\\
 \end{pmatrix}&\quad &
 U_2=\begin{pmatrix}0 & 0& 1 \\
0 & 0& 0\\
0 & 0& 0\\
 \end{pmatrix}&\\
 U_3&=\begin{pmatrix}0 & 0 & 0 \\
0 & 0& 1\\
0 & 0& 0\\
 \end{pmatrix}
 &\quad & V_1=\begin{pmatrix}0 & 0 & 0 \\
1 & 0& 0\\
0 & 0& 0\\
 \end{pmatrix};
 \end{align*}
 and a basis for $\RR^3$ to be $Y_1=\begin{pmatrix}1 \\
0\\
0\\
 \end{pmatrix}$, $Y_2=\begin{pmatrix}0 \\
1\\
0\\
 \end{pmatrix}$ and $Y_3=\begin{pmatrix}0 \\
0\\
1\\
 \end{pmatrix}$.

 Next, we will give a detailed description of irreducible representations of $S$ \emph{\textbf{without no non-trivial $L$-fixed vectors}.}

For any $h=\begin{pmatrix}
  a & b & u_1\\
c & d & u_2\\
0 & 0 & 1 \\
 \end{pmatrix}\in H$ and $v=\begin{pmatrix}v_1 \\
v_2\\
v_3\\
 \end{pmatrix}\in\RR^3$, the action of $h^\tau$ on $v$ is:
\begin{align*}
 h^\tau v=(av_1+cv_2,\,bv_1+dv_2,\,u_1v_1+u_2v_2+v_3)^\tau.
\end{align*}
This allows us to completely determine the orbits and the corresponding representations. The orbits fall into two classes:
\begin{itemize}
  \item if $(v_1,v_2,v_3)\neq (0,0,v_3)$, then the orbit is just the whole space except the origin,

\medskip
  \item if $(v_1,v_2,v_3)= (0,0,v_3)$, then the orbit is a single point $(0,0,v_3)^\tau\in\RR^3$ and its stabilizer is $S$.
\end{itemize}
For the second class, the corresponding representations are trivial on $L$. Then we just need to focus on the first class. Using \eqref{for:81} we see that the stabilizer $N$ of $\begin{pmatrix}1\\
 0\\
0 \\
\end{pmatrix}$ in $H$ is $\begin{pmatrix}1 & 0& 0\\
 c & 1& u_2\\
0 & 0& 1\\
\end{pmatrix}$, where $(c,u_2)^\tau\in\RR^2$. Since $H/N$ is isomorphic to $\RR^3\backslash(0,0,0)^\tau$, a Borel section is given by $\Lambda(x,y,z)=\begin{pmatrix}x^{-1} & y& zx^{-1}\\
0 & x & 0\\
0 & 0 & 1\\
 \end{pmatrix}$. For $g=\begin{pmatrix}a & b & u_1\\
c & d & u_2\\
0 & 0 & 1\\
 \end{pmatrix}$, the
action of the group on the cosets is
\begin{align*}
g^{-1}\Lambda(x,y,z)&=\Lambda(D,E,FD)\begin{pmatrix}
  1 & 0 & 0 \\
 -\frac{c}{xD} & 1 & \frac{cu_1-au_2-zx^{-1}c}{D}\\
  0 & 0 & 1
\end{pmatrix}
\end{align*}
where
\begin{align}\label{for:13}
D&= -cy+xa,\qquad E=yd-bx\notag\\
F&=(azd-adu_1x-cybu_2+au_2yd+cu_1bx-czb)D^{-1}.
\end{align}
Then the action of the group on the section $\Lambda$ is:
\begin{align*}
    &\Lambda(x,y,z)^{-1}(g,v)\Lambda\bigl((g,v)^{-1}(x,y,z)\bigl)=(P,V)
\end{align*}
where $v=(v_1,v_2,v_3)^\tau\in\RR^3$,
\begin{align*}
P&=\begin{pmatrix}
  1 & 0 & 0 \\
 \frac{c}{xD} & 1 & \frac{zx^{-1}c-cu_1+au_2}{D}\\
  0 & 0 & 1
\end{pmatrix}\quad\text{and}\quad V=\begin{pmatrix}Dv_1-Ev_2-DFv_3 \\
D^{-1}v_2\\
v_3
 \end{pmatrix}.
\end{align*}
Let
\begin{align}\label{for:3}
    p_1=c(xD)^{-1}\qquad\text{and}\qquad p_2=z(xD)^{-1}c-cu_1D^{-1}+au_2D^{-1}.
\end{align}
Note that the  irreducible representations of $S$ with $\mu$ supported on the orbit of $\begin{pmatrix}
  1 \\
  0\\
  0
\end{pmatrix}$ are induced from irreducible representations on $N$ which is isomorphic to $\RR^2$, then by using Theorem \ref{th:1} we have
\begin{lemma}\label{le:2}
All the irreducible representations of $(SL(2,\RR)\ltimes \RR^2)\ltimes\RR^3$ without non-trivial $L$-fixed vectors are induced representations and parameterized by $t,\,r\in\RR^2$ and the group action is defined by
\begin{gather*}
\Pi_{(t,r)}: (SL(2,\RR)\ltimes \RR^2)\ltimes\RR^3\rightarrow \mathcal{B}(\mathbb{H}_{(t,r)})\\
\Pi_{(t,r)}(v)f(x,y,z)=e^{(xv_1-yv_2-zv_3)\sqrt{-1}}f(x,y,z),\\
\Pi_{(t,r)}(g)f(x,y,z)=e^{(p_1r+p_2t)\sqrt{-1}}f(D,E,FD);
 \end{gather*}
 and
 \begin{align*}
 \norm{f}_{\mathbb{H}_{(t,r)}}=\norm{f}_{L^2(\RR^3)},
 \end{align*}
  where $g=\begin{pmatrix}a & b & u_1\\
c & d & u_2\\
0 & 0 & 1\\
 \end{pmatrix}$ and $v=(v_1,\,v_2,\,v_3)^\tau\in\RR^3$. Here $D,\,E,\,F$ are defined in \eqref{for:13} and $p_1,\,p_2$ are defined in \eqref{for:3}.

Computing derived representations, we get
 \begin{gather}\label{for:7}
 X=x\partial_x-y\partial_y,\qquad U_1=-x\partial_y,\qquad U_2=-x\partial_z,\notag\\
  U_3=y\partial_z+\sqrt{-1}tx^{-1},\notag\\
  V_1=-y\partial_x+\sqrt{-1}(r+tz)x^{-2},\notag\\
  Y_1=x\sqrt{-1},\qquad Y_2=-y\sqrt{-1},\qquad Y_3=-z \sqrt{-1}.
 \end{gather}
\end{lemma}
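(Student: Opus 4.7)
The plan is to apply Mackey's theorem (Theorem \ref{th:1}) to $S=H\ltimes\RR^3$ with abelian closed normal subgroup $\mathcal{N}=\RR^3$. The excerpt already carried out the $H$-orbit analysis on $\widehat{\RR^3}\cong\RR^3$: there are exactly two orbits, the $v_3$-axis (points fixed by $H$) and its complement $\mathcal{O}$. In any irreducible unitary representation $\pi$ of $S$, the spectral measure $\mu$ of $\pi|_{\RR^3}$ is ergodically supported on a single $H$-orbit; if $\mu$ were supported on the $v_3$-axis then $\pi|_L$ would be trivial, contradicting the no-non-trivial-$L$-fixed-vector hypothesis. So $\mu$ must live on $\mathcal{O}$, and by Mackey such $\pi$ are exactly those induced from a character of the stabilizer at any base point of $\mathcal{O}$.

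Taking the base character $\chi_0$ attached to $(1,0,0)^\tau$, the stabilizer in $S$ is $S_{\chi_0}=N\ltimes\RR^3$ with $N$ the two-dimensional abelian subgroup of $H$ already written down. Because $N$ is abelian and commutes with the scalar $\chi_0$, the irreducible unitary representations of $S_{\chi_0}$ that restrict to $\chi_0$ on $\RR^3$ correspond bijectively to unitary characters of $N\cong\RR^2$, which I parameterize by $(t,r)\in\RR^2$ via
\[
\lambda_{(t,r)}\begin{pmatrix}1&0&0\\ c&1&u_2\\0&0&1\end{pmatrix}=e^{(cr+u_2 t)\sqrt{-1}}.
\]
Then $\Pi_{(t,r)}:=\mathrm{Ind}_{S_{\chi_0}}^{S}(\lambda_{(t,r)}\otimes\chi_0)$ gives the claimed family, and Mackey's theorem guarantees that it exhausts (up to unitary equivalence) all irreducible representations of $S$ without non-trivial $L$-fixed vectors.

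To obtain the explicit action I would substitute the decomposition already spelled out in the excerpt into the induced-representation formula \eqref{for:80}. Via the Borel section $\Lambda(x,y,z)$, the homogeneous space $S/S_{\chi_0}$ is identified with $\mathcal{O}$; the text has recorded $g^{-1}\Lambda(x,y,z)=\Lambda(D,E,FD)\cdot n$ with $D,E,F$ given by \eqref{for:13}, and the cocycle $\Lambda(x,y,z)^{-1}(g,v)\Lambda((g,v)^{-1}(x,y,z))=(P,V)$ with $P$ encoded by $(p_1,p_2)$ from \eqref{for:3} and $V$ as displayed above it. Applying $\sigma=\lambda_{(t,r)}\otimes\chi_0$ to $(P,V)$ produces the phase $e^{(p_1 r+p_2 t)\sqrt{-1}}\cdot e^{(xv_1-yv_2-zv_3)\sqrt{-1}}$, which on specializing to pure $g$ or pure $v$ gives precisely the two formulas in the lemma. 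The Hilbert space is $L^2(\mathcal{O})$; since $\mathcal{O}$ has full Lebesgue measure in $\RR^3$ and $H$ preserves Lebesgue measure (its action has determinant $\pm 1$), the model is unitarily equivalent to $L^2(\RR^3)$ with the standard norm.

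The derived action \eqref{for:7} then falls out by differentiating $s\mapsto \Pi_{(t,r)}(\exp sY)f$ at $s=0$ for each basis element $Y$. For $Y\in\{Y_1,Y_2,Y_3\}$ only the $\RR^3$ phase contributes, giving multiplication by $\sqrt{-1}x,-\sqrt{-1}y,-\sqrt{-1}z$. For $Y\in\{X,U_1,U_2,U_3,V_1\}$ the change of variables $(x,y,z)\mapsto(D,E,FD)$ produces the first-order differential parts, while the phase contributes nontrivially only for $U_3$ and $V_1$: plugging $c=0,\,u_2=s$ into $p_2$ gives $\partial_s p_2|_0=1/x$, accounting for the $\sqrt{-1}tx^{-1}$ in $U_3$; plugging $c=s,\,u_2=0$ gives $\partial_s p_1|_0=1/x^2$ and $\partial_s p_2|_0=z/x^2$, accounting for the $\sqrt{-1}(r+tz)x^{-2}$ term in $V_1$. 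I do not anticipate any conceptual obstacle; the entire exercise is a bookkeeping check, the main care being keeping sign conventions consistent with \eqref{for:80} so that the parameters $(t,r)$ attach to the intended generators.
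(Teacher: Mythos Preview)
Your proposal is correct and follows essentially the same approach as the paper: the paper's argument is precisely the Mackey-theoretic orbit analysis carried out in the text preceding the lemma, culminating in the sentence ``Note that the irreducible representations of $S$ with $\mu$ supported on the orbit of $(1,0,0)^\tau$ are induced from irreducible representations on $N$ which is isomorphic to $\RR^2$, then by using Theorem~\ref{th:1} we have\ldots'', after which the explicit formulas are read off from the already-computed section decomposition. Your write-up makes the role of the full stabilizer $S_{\chi_0}=N\ltimes\RR^3$ and the parameterization by $\widehat{N}\cong\RR^2$ more explicit, and supplies the differentiation bookkeeping for \eqref{for:7} that the paper leaves to the reader, but the underlying argument is identical.
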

\begin{remark}\label{re:5}
From the relation
\begin{align*}
  \Pi_{t,r}\bigl(\begin{pmatrix}1 & 0 & c\\
0 & 1 & 0\\
0 & 0 & 1\\
 \end{pmatrix}\bigl)f(x,y,z)=f(x,y,z-cx),\qquad \forall\, c\in\RR,
\end{align*}
we see that the only vector in $\mathbb{H}_{(t,r)}$ fixed by the one-parameter subgroup $\begin{pmatrix}1 & 0 & c\\
0 & 1 & 0\\
0 & 0 & 1\\
 \end{pmatrix}$ is zero, which implies that $\Pi_{t,r}\mid_{SL(2,\RR)\ltimes \RR^2}$ has no non-trivial $\RR^2$-invariant vectors.
\end{remark}
Next, we will give a detailed description of $\text{Ind}_{N}^{G}(1)$, where $G=SL(n,\RR)$, $n\geq 2$ and $N$ is the stabilizer of the vector $(1,0,\cdots,0)^\tau\in \RR^n$ in $G$, which has the form $\begin{pmatrix}1 & v \\
0 & A\\
 \end{pmatrix}$, where $v^\tau\in\RR^{n-1}$ and $A\in SL(n-1,\RR)$. Let $P$ the maximal parabolic subgroup of $G$ which stabilizes the
line $(\RR,0,\cdots,0)^\tau$.
\subsection{Decomposition of $\text{Ind}_{N}^{G}$ into a direct integral}\label{sec:4}
 At first, we calculate $\text{Ind}_N^{P}(1)$. Note that $P/N$ is isomorphic to $\RR\backslash 0$. Choose a section given by
$\Lambda(\delta,x)=\text{diag}\bigl(\text{sgn}(\delta) e^x,\text{sgn}(\delta) e^{-x},1,\cdots,1\bigl)$, where $\delta=\pm 1$. By Mackey theory we see that the group action is defined by
\begin{gather*}
\gamma: P\rightarrow \mathcal{B}(\mathcal{H}_\gamma)\\
\gamma(\begin{pmatrix}a & v \\
0 & A\\
 \end{pmatrix})f(\delta,x)=f(\text{sgn}(a)\delta,\,x-\log\abs{a}),
\end{gather*}
and
\begin{align*}
\norm{f}_{\mathcal{H}_\gamma}=\norm{f(1,\cdot)}_{L^2(\RR)}+\norm{f(-1,\cdot)}_{L^2(\RR)},
\end{align*}
where $a\in \RR\backslash 0$, $A\in GL(n-1,\RR)$ and $v^\tau\in\RR^{n-1}$. Let $\mathcal{H}'_\gamma=L^2(\RR)\times L^2(\RR)$ with norm
 \begin{align*}
\norm{(f,g)}_{\mathcal{H}'_\gamma}=\norm{f-g}_{L^2(\RR)}+\norm{f+g}_{L^2(\RR)}.
 \end{align*}
 The group action is defined by
 \begin{gather*}
\gamma': P\rightarrow \mathcal{B}(\mathcal{H}'_\gamma)\\
\gamma'(\begin{pmatrix}a & v \\
0 & A\\
 \end{pmatrix})(f(x),\,g(x))=\bigl(f(x-\log\abs{a}),\,\text{sgn}(a)g(x-\log\abs{a})\bigl).
\end{gather*}
 Then the map $\mathcal{H}_\gamma\xrightarrow{F} \mathcal{H}'_\gamma$:
 \begin{align*}
 f(\delta,x)\rightarrow\bigl(f(1,x)+f(-1,x),\,f(1,x)-f(-1,x)\bigl)
 \end{align*}
 is a unitary equivalence over $P$. Reformulating terms by using Fourier Inversion Theorem, we have
 \begin{align*}
   (f(x),\,g(x))=\bigl(\frac{1}{\sqrt{2\pi}}\int_{\RR}\hat{f}(t)e^{tx\sqrt{-1}}dt,\,\frac{1}{\sqrt{2\pi}}\int_{\RR}\hat{g}(t)e^{tx\sqrt{-1}}dt\bigl)
   \end{align*}
   and
   \begin{align*}
    &\gamma'(\begin{pmatrix}a & v \\
0 & A\\
 \end{pmatrix})(f(x),\,g(x))\\
 &=\bigl(\frac{1}{\sqrt{2\pi}}\int_{\RR}\hat{f}(t)\abs{a}^{-t\sqrt{-1}}e^{tx\sqrt{-1}}dt,\,\frac{1}{\sqrt{2\pi}}\int_{\RR}\text{sgn}(a)
 \hat{g}(t)\abs{a}^{-t\sqrt{-1}}e^{tx\sqrt{-1}}dt\bigl)
 \end{align*}
 where $\hat{h}(t)=\frac{1}{\sqrt{2\pi}}\int_{\RR}h(x)e^{-tx\sqrt{-1}}dx$ for any $h\in L^2(\RR)$.

Hence we see that $(\gamma',\,\mathcal{H}'_\gamma)$ is unitarily equivalent to $\int_{\RR} \lambda_{-t}^{+}\oplus \lambda_{-t}^{-}dt$ where
$\lambda_t^{\pm}$ is defined in \eqref{for:112} of Section \ref{sec:25}.

\eqref{for:102} of Proposition \ref{po:3} shows that $\text{Ind}_N^{SL(n,\RR)}(1)$ is unitarily equivalent to $\text{Ind}_P^{SL(n,\RR)}(\text{Ind}_N^{P}(1))$. By \eqref{for:104} of Proposition \ref{po:3} and earlier arguments we find that $\text{Ind}_P^{SL(n,\RR)}(\text{Ind}_N^{P}(1))$ is unitarily equivalent to $\text{Ind}_P^{SL(n,\RR)}(\int_{\RR} \lambda_{-t}^{+}\oplus \lambda_{-t}^{-}dt)$. Finally,
\eqref{for:103} of Proposition \ref{po:3} shows that $\text{Ind}_P^{SL(n,\RR)}(\int_{\RR} \lambda_{-t}^{+}\oplus \lambda_{-t}^{-}dt)$ is unitarily equivalent to
$\int_{\RR}\text{Ind}_P^{SL(n,\RR)}( \lambda_{-t}^{+}\oplus \lambda_{-t}^{-})dt$.
\begin{remark}\label{re:6}
When $n=2$ recall the well known fact that for any $t\in\RR$ the principal series representation $\pi_{t\sqrt{-1}}^{\pm}$ of $SL(2,\RR)$ is equivalent to $\text{Ind}_P^{SL(2,\RR)} (\lambda_t^{\pm})$. Then the earlier discussion shows that only principal series representations appear in $\rho_0\mid_{SL(2,\RR)}=\text{Ind}_N^{SL(2,\RR)}(1)$.
\end{remark}
Before proceeding further with the proof of Theorem \ref{th:7}, we list some important properties of representation of semidirect product $SL(n,\RR)\ltimes\RR^n$ without non-trivial $\RR^n$-invariant vectors (see \cite{oh}, \cite{zhenqi1} and \cite{Zimmer}) which will be frequently
used in this paper:
\begin{proposition}\label{cor:1}
For any unitary representation $\pi$ of $SL(n,\RR)\ltimes\RR^n$ without non-trivial $\RR^n$-fixed vectors, where $SL(n,\RR)$ acts on $\RR^n$ as the standard representation, $\pi$ contains no non-trivial $SL(n,\RR)$-fixed vectors either.
\end{proposition}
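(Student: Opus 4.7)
The plan is to show that any $SL(n,\RR)$-fixed vector $v\in\mathcal{H}(\pi)$ must also be fixed by $\RR^n$, which by hypothesis forces $v=0$. The main tools are the spectral decomposition of $\pi|_{\RR^n}$ developed in Section \ref{sec:12} and the observation recalled at the beginning of Section \ref{sec:20} that the standard $SL(n,\RR)$-action on $\RR^n$ admits only two orbits: $\{0\}$ and $\RR^n\setminus\{0\}$.

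First I would associate to $v$ the positive finite Borel measure $\mu_v$ on $\widehat{\RR^n}$ determined, as in \eqref{for:84}, by
\begin{align*}
\langle\pi(x)v,v\rangle = \int_{\widehat{\RR^n}}\chi(x)\,d\mu_v(\chi),\qquad \forall\,x\in\RR^n,
\end{align*}
of total mass $\norm{v}^2$. The hypothesis that $\pi$ has no non-trivial $\RR^n$-invariant vector forces the spectral projection of $\pi|_{\RR^n}$ onto the trivial character to vanish; in particular $\mu_v(\{0\})=0$, so $\mu_v$ is concentrated on $\widehat{\RR^n}\setminus\{0\}$.

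Next I would show $\mu_v$ is invariant under the dual $SL(n,\RR)$-action on $\widehat{\RR^n}$. Conjugation in the semidirect product gives $\pi(g)\pi(x)\pi(g)^{-1} = \pi(g\cdot x)$; combined with $\pi(g)v=v$, the matrix coefficient $\phi(x)=\langle\pi(x)v,v\rangle$ satisfies $\phi(g\cdot x)=\phi(x)$ for every $g\in SL(n,\RR)$ and $x\in\RR^n$. Substituting into the spectral formula and using the uniqueness part of the Fourier inversion theorem translates this functional identity into $SL(n,\RR)$-invariance of $\mu_v$ under the dual action on $\widehat{\RR^n}$. Via the identification $\widehat{\RR^n}\cong\RR^n$, $v\mapsto\zeta_v$, recalled in Section \ref{sec:20}, this dual action is linear and again has orbits $\{0\}$ and $\RR^n\setminus\{0\}$.

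Finally, I would invoke the fact that there is no non-zero finite $SL(n,\RR)$-invariant Radon measure on $\RR^n\setminus\{0\}$: since $SL(n,\RR)$ acts transitively with stabilizer $SL(n-1,\RR)\ltimes\RR^{n-1}$, and both the acting group and its stabilizer are unimodular, an invariant measure is unique up to positive scalar and must agree with the restriction of Lebesgue measure, which is infinite. Hence $\mu_v\equiv 0$, so $\phi\equiv 0$ and $v=0$. The main subtlety I foresee is the measure-theoretic step: ensuring that Fourier uniqueness applied to the matrix coefficient $\phi$ really upgrades to invariance of $\mu_v$ as a measure; this is clean given the framework of Section \ref{sec:12}, but requires keeping the normalizations between the spectral measure and the dual action consistent.
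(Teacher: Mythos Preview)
Your argument is correct. The paper itself does not give a proof but simply records that the proposition is a special case of a lemma in \cite{zhenqi1}, proved there via Mackey's theory together with the Borel density theorem. Your route is genuinely different and more self-contained: rather than decomposing $\pi$ into Mackey-induced irreducibles and appealing to Borel density, you work directly with the spectral measure $\mu_v$ of a hypothetical $SL(n,\RR)$-fixed vector, show it is invariant under the dual linear action, and kill it by the elementary observation that the unique (up to scalar) $SL(n,\RR)$-invariant Radon measure on $\RR^n\setminus\{0\}$ is Lebesgue and hence infinite. What your approach buys is that it avoids any black-box input (no Borel density, no classification of induced irreducibles) and makes the mechanism---an invariant finite measure on an orbit with unimodular stabilizer must vanish---completely transparent; what the paper's cited approach buys is greater generality, since the Mackey/Borel-density argument applies to semidirect products $H\ltimes\RR^m$ where the $H$-action on $\widehat{\RR^m}\setminus\{0\}$ need not be transitive. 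One small point worth tightening in your write-up: the passage from $\phi(g\cdot x)=\phi(x)$ to invariance of $\mu_v$ is exactly Bochner's uniqueness for positive-definite functions (equivalently, injectivity of the Fourier transform on finite Borel measures), so you might name that rather than ``Fourier inversion.''
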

The proposition is a special case of Lemma 7.4 in \cite{zhenqi1}, which follows from Mackey¡¯s theory and Borel density theorem (see \cite[Theorem 3.2.5]{Zimmer}).

Recall the following direct consequence of the well known Howe-Moore theorem
on vanishing of the matrix coefficients at infinity \cite{howe-moore}: if $G$ is a simple Lie group with finite center and $\rho$ is a unitary representation of $G$ without a non-zero $G$-invariant vector and $M$ is a closed non-compact subgroup of $G$, then $\rho$ has
no $M$-invariant vector.

Since $SL(n,\RR)$, $n\geq 3$ has Kazhdan's property $(T)$ (see \cite{tan} and \cite{Margulis}), by above proposition and Howe-Moore, we see that if $n\geq 3$ $\pi\mid_{SL(n,\RR)}$ has a spectral gap, that is,  $\pi\mid_{SL(n,\RR)}$ is outside a fixed neighborhood of the trivial representation of $SL(n,\RR)$ in the Fell topology. When $n=2$, $SL(2,\RR)$ fails to have property $(T)$, but the following result (see \cite{oh} and \cite{Zimmer}) shows that $\pi\mid_{SL(2,\RR)}$ also behaviors in a similar way:
\begin{proposition}\label{cor:2}
We assume notations in Proposition \ref{cor:1}. If $n=2$, then $\pi\mid_{SL(2,\RR)}$ is  tempered.
\end{proposition}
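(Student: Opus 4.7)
The plan is to combine the Mackey direct integral decomposition from Lemma \ref{le:1}, the temperedness criterion of Theorem \ref{th:3}, and Mackey's subgroup (restriction) theorem, so as to realize each irreducible restriction as a representation induced from an amenable subgroup.

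First I would decompose $\pi$ by Mackey theory. By Lemma \ref{le:1} and direct integral theory, any unitary representation $\pi$ of $S=SL(2,\RR)\ltimes\RR^2$ without non-trivial $\RR^2$-fixed vectors is of the form $\pi\cong\int_\RR\rho_t\,d\mu(t)$ with $\rho_t$ as in Lemma \ref{le:1}. By Theorem \ref{th:3}, $\pi\mid_{SL(2,\RR)}$ is tempered if and only if $\rho_t\mid_{SL(2,\RR)}$ is tempered for $\mu$-almost every $t$, so it suffices to treat each irreducible $\rho_t$.

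Next I would identify $\rho_t\mid_{SL(2,\RR)}$ via Mackey's subgroup theorem. By construction $\rho_t=\text{Ind}_N^S(\sigma_t)$, where $N=U_{12}\ltimes\RR^2$ is the stabilizer of $(0,1)^\tau\in\widehat{\RR^2}$ and $\sigma_t$ is the character of $N$ equal to $\chi_{(0,1)}$ on $\RR^2$ and to $\chi_t(x)=e^{itx}$ on $U_{12}$ (forced, since $U_{12}$ fixes $\chi_{(0,1)}$, so the Mackey little group is all of $U_{12}$). Using the composition law $(h,0)(g,v)(u,w)=(hgu,u^{-1}v+w)$, one checks $SL(2,\RR)\cdot N=S$, so $SL(2,\RR)\backslash S/N$ is a single point, and $SL(2,\RR)\cap N=U_{12}$. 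Mackey's subgroup theorem therefore yields
\[
\rho_t\mid_{SL(2,\RR)}\cong\text{Ind}_{U_{12}}^{SL(2,\RR)}(\chi_t).
\]

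Finally I would conclude by amenability. Since $U_{12}\cong\RR$ is abelian, any unitary representation of $U_{12}$ (in particular $\chi_t$) is weakly contained in the regular representation $\lambda_{U_{12}}$. By continuity of induction under weak containment, combined with induction by stages (Proposition \ref{po:3}\eqref{for:102}) applied to $\lambda_{U_{12}}\cong\text{Ind}_{\{e\}}^{U_{12}}(1)$, we obtain
\[
\text{Ind}_{U_{12}}^{SL(2,\RR)}(\chi_t)\prec\text{Ind}_{U_{12}}^{SL(2,\RR)}(\lambda_{U_{12}})\cong\lambda_{SL(2,\RR)},
\]
which is precisely the temperedness of $\rho_t\mid_{SL(2,\RR)}$. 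The main technical obstacle is the Mackey subgroup theorem bookkeeping: correctly identifying the $U_{12}$-character in the restriction and verifying that $SL(2,\RR)\backslash S/N$ is a single full-measure point; once this is done, amenability of the abelian unipotent subgroup finishes the argument.
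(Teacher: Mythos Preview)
Your argument is correct. The paper itself does not supply a proof of this proposition; it is quoted as a known fact with references to Oh and Zimmer. Your route---reducing to the irreducibles $\rho_t$ via Theorem \ref{th:3}, identifying $\rho_t\mid_{SL(2,\RR)}\cong\text{Ind}_{U_{12}}^{SL(2,\RR)}(\chi_t)$ by Mackey's subgroup theorem (the single double coset coming from $S=SL(2,\RR)\cdot\RR^2\subset SL(2,\RR)\cdot N$, and $SL(2,\RR)\cap N=U_{12}$ with $\sigma_t\mid_{U_{12}}=\chi_t$), and then invoking amenability of $U_{12}$ to get $\chi_t\prec\lambda_{U_{12}}$ and hence $\text{Ind}_{U_{12}}^{SL(2,\RR)}(\chi_t)\prec\lambda_{SL(2,\RR)}$---is a clean self-contained argument that the paper does not spell out.

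Two minor remarks. First, the direct integral $\pi\cong\int_\RR\rho_t\,d\mu(t)$ may carry multiplicity; this is harmless since temperedness passes to multiples and direct integrals. Second, the paper does treat the special case $t=0$ explicitly: Section \ref{sec:4} and Remark \ref{re:6} decompose $\rho_0\mid_{SL(2,\RR)}=\text{Ind}_{U_{12}}^{SL(2,\RR)}(1)$ into principal series directly, which is consistent with (and a sharpening of) your conclusion at $t=0$. Your argument has the advantage of handling all $t$ uniformly without needing the explicit Plancherel-type decomposition.
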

\begin{remark}\label{re:2}From arguments in Section \ref{sec:11}, we see that $\pi\mid_{SL(2,\RR)}$ only contains the principal series and discrete series of $SL(2,\RR)$. If the attached space of $\pi$ is $\mathcal{H}$ and $\mathcal{H}$ is decomposed into a direct integral as described in \eqref{for:1} of Section \ref{sec:1}
\begin{align*}
\mathcal{H}=\int_{\oplus}\mathcal{H}_ud\mu(u).
\end{align*}
then above discussion shows that $\mu(0,1)=0$.
\end{remark}
We end this section by a standard result about cocycle equation:
\begin{lemma}\label{le:14}
Suppose $(\pi,\mathcal{H})$ is a unitary representation for a Lie group $G$ with Lie algebra $\mathfrak{g}$ and $\mathfrak{u}_1,\,\mathfrak{u}_2\in \mathfrak{g}$. Suppose there is no non-trivial $\mathfrak{u}_2$-invariant vectors (we call $v\in\mathcal{H}$ a $\mathfrak{u}_2$-invariant vector if $\mathfrak{u}_2v=0$). If
$f,\,g\in \mathcal{H}$ satisfies the cocycle equation $\mathfrak{u}_1f=\mathfrak{u}_2g$ and the equations $\mathfrak{u}_1h=g$ has a solution $h\in \mathcal{H}^2$, then $h$ also solves the equation $\mathfrak{u}_1h=g$.
\end{lemma}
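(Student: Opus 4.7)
Reading the statement literally, the ``given'' equation and the ``concluded'' equation are identical, so there is evidently a typographical slip. I take the intended content to be the following: given the cocycle relation $\mathfrak{u}_{1}f=\mathfrak{u}_{2}g$ together with a solution $h\in\mathcal{H}^{2}$ of the companion equation $\mathfrak{u}_{2}h=f$, the same $h$ automatically satisfies $\mathfrak{u}_{1}h=g$. This reading matches the stated no-$\mathfrak{u}_{2}$-invariants hypothesis exactly. In the target application (Theorem~\ref{th:8}), $\mathfrak{u}_{1}$ and $\mathfrak{u}_{2}$ are commuting root vectors of $SL(n,\RR)$ and Howe-Moore guarantees that every non-compact one-parameter unipotent subgroup has no non-trivial invariants, so the hypothesis is automatic.

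\medskip

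My plan is to reduce everything to a one-line commutator calculation. Commutativity $[\mathfrak{u}_{1},\mathfrak{u}_{2}]=0$ is implicit in the cocycle framework and holds in the application. The bounded unitary one-parameter groups $\pi(\exp t\mathfrak{u}_{i})$ then commute, and by Stone's theorem their skew-adjoint generators commute on their common second-order Sobolev domain. Since $h\in\mathcal{H}^{2}$, both $\mathfrak{u}_{1}h$ and $\mathfrak{u}_{2}h$ lie in $\mathcal{H}^{1}$, so the iterated derivatives $\mathfrak{u}_{1}\mathfrak{u}_{2}h$ and $\mathfrak{u}_{2}\mathfrak{u}_{1}h$ are well-defined vectors of $\mathcal{H}$ and agree. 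Applying $\mathfrak{u}_{1}$ to $\mathfrak{u}_{2}h=f$ and invoking the cocycle relation yields
\begin{equation*}
\mathfrak{u}_{2}\bigl(\mathfrak{u}_{1}h-g\bigr)=\mathfrak{u}_{2}\mathfrak{u}_{1}h-\mathfrak{u}_{2}g=\mathfrak{u}_{1}\mathfrak{u}_{2}h-\mathfrak{u}_{2}g=\mathfrak{u}_{1}f-\mathfrak{u}_{2}g=0.
\end{equation*}
Hence $\mathfrak{u}_{1}h-g$ is annihilated by $\mathfrak{u}_{2}$, and the no-invariants hypothesis forces $\mathfrak{u}_{1}h=g$.

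\medskip

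There is no substantive obstacle; the only point deserving a word is justifying the identity $\mathfrak{u}_{1}\mathfrak{u}_{2}h=\mathfrak{u}_{2}\mathfrak{u}_{1}h$ at the $\mathcal{H}^{2}$ level rather than merely on the smooth subspace. This is routine: one approximates $h$ in the $\mathcal{H}^{2}$ norm by vectors in $\mathcal{H}^{\infty}$, where the identity is elementary from commutativity of the one-parameter groups, and passes to the limit using continuity of both sides in the $\mathcal{H}^{2}$ norm.
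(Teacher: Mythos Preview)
Your argument is correct and is essentially the same one-line commutator computation as the paper's own proof. The only difference is cosmetic: the paper reads the typo the other way (assuming $\mathfrak{u}_{1}h=g$ and concluding $\mathfrak{u}_{2}h=f$), which actually makes its final appeal to ``no non-trivial $\mathfrak{u}_{2}$-invariant vectors'' a slip---your reading matches the stated hypothesis cleanly, and in the applications both no-invariants conditions hold anyway by Howe--Moore.
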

\begin{proof}
From $\mathfrak{u}_1h=g$ we have
\begin{align*}
\mathfrak{u}_1\mathfrak{u}_2h=\mathfrak{u}_2(\mathfrak{u}_1h)=\mathfrak{u}_2 g=\mathfrak{u}_1f,
\end{align*}
which implies that $\mathfrak{u}_2h=f$ since there is no non-trivial $\mathfrak{u}_2$-invariant vectors.
\end{proof}
\section{Proof of Theorem \ref{th:7}}\label{sec:21}
At the beginning, we give a detailed description of the group action of $\text{Ind}_P^{G}( \lambda_{t}^{\pm})$. We recall the formula for the induced representation $\text{Ind}_P^{G}( \lambda_{t}^{\pm})$ (see \cite{Kn} and \cite{warner}). Consider the Langlands
decomposition of $P$: $P=MA_PN$. Denote by $\bar{N}$ the unipotent radical of the opposite parabolic subgroup to $P$ with the common Levi subgroup $MA_P$.
Note that $\bar{N}P$ is a dense open submanifold of $G$ whose complement has zero Haar measure. If $g\in\bar{N}P$ decomposes under
the decomposition $\bar{N}P$, we denote by $P(g)$ the $P$-component of $g$. If $g$
decomposes under $\bar{N}MA_PN$ as
\begin{align*}
    g=\bar{n}(g)m(g)\exp a(g)n(g),
\end{align*}
then the action is given by
\begin{align*}
\text{Ind}_P^{G}( \lambda_{t}^{\pm})(g)f(x)=e^{-\delta_0(a(g^{-1}x))}\lambda_{t}^{\pm}(P(g^{-1}x)^{-1})f(\bar{n}(g^{-1}x))
\end{align*}
for any $f\in L^2(\bar{N},\,dx)$ and $x\in \bar{N}$, where $\delta_0$ is the half sum of positive $N$-roots. We write an element of $\bar{N}$ as
$x=(1,x_1,\cdots,x_{n-1})^\tau$. We choose a basis for the Lie algebra of $\mathfrak{sl}(n,\RR)$ to be $\mathfrak{u}_{i,j}$, $1\leq i\neq j\leq n$ and $X_i$, $1\leq i\leq n-1$, where $\mathfrak{u}_{i,j}$ are defined in Section \ref{sec:25} and  $X_{i}=\diag(0,\cdots,\underset{i}{1},\underset{i+1}{-1},\cdots,0)$. Let $g_{i,j}^t=\exp(t\mathfrak{u}_{i,j})$ and $h_i^t=\exp(tX_i)$, $t\in\RR$. The realization of the representation $\text{Ind}_P^{G}( \lambda_{t}^{\pm})$
on $L^2(\bar{N},\,dx)$ can be formulated as follows:
\begin{align*}
&\text{Ind}_P^{G}( \lambda_{t}^{\pm})(h_{i}^s)f(x_1,\cdots,x_{n-1})\\
&=\left\{\begin{aligned} &e^{sn/2}e^{ts\sqrt{-1}}f(e^{2s}x_1,e^{s}x_{2}\cdots,e^{s}x_{n-1}),&\quad &i=1\\
&f(x_1,\cdots,e^{-s}x_{i-1},e^{s}x_{i}\cdots,x_{n-1}),&\quad& i\geq 2;
\end{aligned}
 \right.
\end{align*}
and has the following expressions
\begin{align*}
&\text{Ind}_P^{G}( \lambda_{t}^{\pm})(g_{i,j}^s)f(x_1,\cdots,x_n)\\
    &=\left\{\begin{aligned} &\abs{1-x_{j-1}s}^{-n/2-t\sqrt{-1}}\varepsilon^{\pm}(1-x_{j-1}s)\\
    &\cdot f(\frac{x_1}{1-x_{j-1}s},\cdots,\frac{x_{n-1}}{1-x_{j-1}s}),&\quad &i=1,\,\,j\geq 2,\\
&f(x_1,\cdots,x_{i-1}-sx_{j-1},\cdots,x_{n-1}),&\quad &i\geq 2,\,\,j\neq 1,\\
&f(x_1,\cdots,x_{i-1}-s,\cdots,x_{n-1}),&\quad &i\geq 2,\,\,j= 1.
\end{aligned}
 \right.
\end{align*}
Since the one-parameter subgroups $g_{i,j}^t$ and $h_\ell^t$ generate $SL(n,\RR)$, the actions of these subgroups determine the group action of $SL(n,\RR)$. Computing derived representations, we get
\begin{align}\label{for:9}
X_i=&\left\{\begin{aligned} &(\frac{n}{2}+t\sqrt{-1})+2x_1\partial_{x_{1}}+\sum_{k=2}^{n-1}x_k\partial_{x_{k}},&\qquad \qquad &i=1,\\
&-x_{i-1}\partial_{x_{i-1}}+x_{i}\partial_{x_{i}},&\qquad \qquad &i\geq 2,
\end{aligned}
 \right.
\end{align}
and
\begin{align}\label{for:105}
\mathfrak{u}_{i,j}=&\left\{\begin{aligned} &(\frac{n}{2}+t\sqrt{-1})x_{j-1}+\sum_{k=1}^{n-1}x_{j-1}x_k\partial_{x_{k}},&\qquad \qquad &i=1,\,\,j\geq 2,\\
&-x_{j-1}\partial_{x_{i-1}},&\qquad \qquad &i\geq 2, \,\,j\neq 1,\\
&-\partial_{x_{i-1}},&\qquad \qquad &i\geq 2, \,\,j= 1.
\end{aligned}
 \right.
\end{align}
We are now in a position to proceed with the proof of Theorem \ref{sec:21}.
 Noting that the Weyl group is the symmetric group $S_n$ which operates simply transitive on the set of Weyl chambers, we may assume that one element in the pair $\mathfrak{u}_{i,j}$ and $\mathfrak{u}_{k,\ell}$ is $\mathfrak{u}_{2,1}$. By assumption, the other one is $\mathfrak{u}_{2,j}$ or $\mathfrak{u}_{j,1}$, $j\geq 3$. Let $h(x_1,\cdots,x_{n-1})=p(x_1)p(x_2)\cdots p(x_{n-1})$ where $p$ is a smooth function with compact support over $\RR$ satisfying the following two conditions: $p\geq 0$ and $p=1$ on $[-1,1]$.
\begin{case} The pair is $\mathfrak{u}_{2,1}$ and $\mathfrak{u}_{2,j}$, $j\geq 3$.

 Let $g=h$ and $f=g\cdot x_{j-1}$. From relations in \eqref{for:9} and \eqref{for:105}, it is easy to check that $f$ and $g$ are smooth vectors for $\text{Ind}_P^{G}( \lambda_{t}^{\pm})$. Using relations in \eqref{for:105} we have
\begin{align}\label{for:78}
 \mathfrak{u}_{2,1}f=-\partial_{x_1}f=-x_{j-1}\partial_{x_{1}}g=\mathfrak{u}_{2,j}g.
\end{align}
If $\mathfrak{u}_{2,1}\omega=g$ where $\omega\in L^2(\RR^{n-1})$, then $-\partial_{x_1}\omega=g$.
Taking fourier transformation on factor $x_1$, we have
\begin{align}
-\sqrt{-1}\hat{\omega}_\xi(\xi,x_2,\cdots,x_{n-1})\cdot \xi&=\hat{p}(\xi)p(x_2)\cdots p(x_{n-1})
\end{align}
where
\begin{align*}
\hat{p}(\xi)&=\frac{1}{\sqrt{2\pi}}\int p(y) e^{-y\xi\sqrt{-1}}dy,\qquad \text{ and }\\
\hat{\varpi}_\xi(\xi,x_2\cdots,x_{n-1})&=\frac{1}{\sqrt{2\pi}}\int \omega(x_1,\cdots,x_{n-1}) e^{-x_1\xi\sqrt{-1}}dx_1.
\end{align*}
Then $\hat{\omega}_\xi\in L^2(\RR^n)$ and $\hat{p}$ is a continuous function. Then we have
\begin{align*}
\hat{\omega}_\xi=\bigl(\sqrt{-1}\hat{p}(\xi)\cdot \xi^{-1}\bigl)p(x_2)\cdots p(x_{n-1}).
\end{align*}
Since  $\hat{p}(0)=\frac{1}{\sqrt{2\pi}}\int p(y)dy>0$ by assumption, $\hat{\omega}_\xi\notin L^2(\RR^n)$.  Then we get a contradiction. Note that the equation $\mathfrak{u}_{2,j}\omega=f$ is also equivalent to the earlier equation $-\partial_{x_1}\omega=g$. Thus we proved the claim for the pair  $\mathfrak{u}_{2,1}$ and $\mathfrak{u}_{2,j}$, $j\geq 3$.
\end{case}
\begin{case}The pair is $\mathfrak{u}_{2,1}$ and $\mathfrak{u}_{j,1}$, $j\geq 3$.

Under the permutation $(1,j)$ the pair changes to $\mathfrak{u}_{2,j}$ and $\mathfrak{u}_{1,j}$. Then we can consider the pair $\mathfrak{u}_{2,j}$ and $\mathfrak{u}_{1,j}$ instead. Let
\begin{align*}
 g&=(\frac{n}{2}+t\sqrt{-1})h-h+\sum_{k=1}^{n-1}x_k\partial_{x_{k}}h\quad \text{ and }\quad f=-\partial_{x_{1}}h.
\end{align*}
Obviously, $f$ and $g$ are smooth vectors for $\text{Ind}_P^{G}( \lambda_{t}^{\pm})$. Using relations in \eqref{for:105} we have
\begin{align*}
 \mathfrak{u}_{1,j}f&=-(\frac{n}{2}+t\sqrt{-1})x_{j-1}\partial_{x_{1}}h-\sum_{k=1}^{n-1}x_{j-1}x_k\partial_{x_{k}}\partial_{x_{1}}h\\
 &=-x_{j-1}\partial_{x_{1}}\Bigl((\frac{n}{2}+t\sqrt{-1})h-h+\sum_{k=1}^{n-1}x_k\partial_{x_{k}}h\Bigl)\\
 &=-x_{j-1}\partial_{x_{1}}g=\mathfrak{u}_{2,j}g.
\end{align*}
If $\mathfrak{u}_{2,j}\omega=f$ where $\omega\in L^2(\RR^{n-1})$, then
\begin{align*}
x_{j-1}\partial_{x_{1}}\omega-\partial_{x_{1}}h=0.
\end{align*}
 Let $A_\epsilon=\{x\in\RR:\abs{x}\geq \epsilon\}$ and $1_{A_\epsilon}(x_{j-1})$ be the indicator function for the set $A_\epsilon$. Multiplying  $1_{A_\epsilon}(x_{j-1})$ to each side the above equation we get
\begin{align*}
0&=(x_{j-1}\partial_{x_{1}}\omega-\partial_{x_{1}}h)1_{A_\epsilon}(x_{j-1})\\
&=x_{j-1}\partial_{x_{1}}\bigl(\omega\cdot 1_{A_\epsilon}(x_{j-1})-h \cdot x_{j-1}^{-1} 1_{A_\epsilon}(x_{j-1})\bigl)\\
&=-\mathfrak{u}_{2,j}\bigl(\omega\cdot 1_{A_\epsilon}(x_{j-1})-h \cdot x_{j-1}^{-1} 1_{A_\epsilon}(x_{j-1})\bigl)
\end{align*}
for any $\epsilon>0$. Since $\omega\cdot 1_{A_\epsilon}(x_{j-1})$ and $h \cdot x_{j-1}^{-1} 1_{A_\epsilon}(x_{j-1})$ are both in $L^2(\RR^{n-1})$, by Howe-Moore,
\begin{align*}
\omega\cdot 1_{A_\epsilon}(x_{j-1})-h \cdot x_{j-1}^{-1} 1_{A_\epsilon}(x_{j-1})=0.
\end{align*}
Thus $\omega+h\cdot x_{j-1}^{-1}=0$ follows immediately  from the arbitrariness of $\epsilon$, which means that $h\cdot x_{j-1}^{-1}\in L^2(\RR^{n-1})$. Then we get a contradiction. If $\mathfrak{u}_{1,j}\omega=g$ where $\omega\in L^2(\RR^{n-1})$, then Theorem \ref{th:6} (the proof is in the next section) shows that $\omega$ is a smooth vector for $\text{Ind}_P^{G}( \lambda_{t}^{\pm})$. It follows from Howe-moore and Lemma \ref{le:14} that $\omega$ also solves the equation $\mathfrak{u}_{2,j}\omega=f$, which contradicts the assumption of $f$ by the earlier argument. Hence we proved the claim for the pair  $\mathfrak{u}_{2,1}$ and $\mathfrak{u}_{j,1}$.
\end{case}

\section{Coboundary for the unipotent flow of $SL(n,\RR)$, $n\geq 3$} \label{sec:16}In this section, $G$ always denotes a Lie group $G$ with finite center, $\mathfrak{g}$ denotes its Lie algebra and $(\pi,\,\mathcal{H})$ denotes a non-trivial unitary representation of $G$.  We recall that for a flow $\psi_t$ on $G$ we say that $F\in \mathcal{H}$ is a coboundary for the
flow  if there is a solution
$f\in \mathcal{H}$ to the cohomological equation
\begin{align*}
    \frac{d}{dt}f\circ\psi_t\,\big|_{t=0}=F.
\end{align*}
In this section we will study the solution of the cohomological equation for various types of Lie group $G$.

\subsection{Coboundary for the horocycle
flow of $G=SL(2,\RR)$} Recall notations in Section \ref{sec:1} and \ref{sec:17}. For the classical horocycle flow defined by the $\mathfrak{sl}(2,\RR)$-matrix $U=\begin{pmatrix}
  0 & 1 \\
  0 & 0
\end{pmatrix}$, there is a classification of the obstructions to the solution of the cohomological
equation established by Flaminio and Forni \cite{Forni}. That is, for any $F\in \mathcal{H}^\infty$, we know precisely the condition under which the equation $Uf=F$ has a solution $f$. Let
\begin{align*}
    \mathcal{E}_U(\mathcal{H})=\{\mathcal{D}\in \mathcal{E}(\mathcal{H}): \mathcal{L}_U\mathcal{D}=0 \}\quad\text{and}\quad\mathcal{H}_U^{-k}=\{\mathcal{D}\in \mathcal{H}^{-k}: \mathcal{L}_U\mathcal{D}=0 \}.
\end{align*}

\begin{theorem}\label{th:2} Suppose $\pi$ has a spectral gap of $u_0$. For all $F\in \mathcal{H}^s$, if $\mathcal{D}(g)=0$ for all $\mathcal{D}\in \mathcal{H}_U^{-k}$ and $t<s-1$, then the equation $Uf=F$ has a solution $f\in \mathcal{H}^t$, which satisfies the Sobolev estimates $\norm{f}_t\leq C_{t,s,u_0}\norm{F}_s$.
\end{theorem}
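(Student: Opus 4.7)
The plan is to reduce the problem to an analysis on each irreducible component via the Casimir direct integral \eqref{for:1}. All $U$-invariant distributions, Sobolev norms, and the equation $Uf=F$ decompose coherently along this integral, so it suffices to prove the statement in each $\mathcal{H}_u$ with constants that are controlled uniformly once $u$ is bounded away from $0$ (the spectral gap hypothesis). The irreducible pieces occurring with nonzero measure are principal series ($u\geq 1$), complementary series ($u_0<u<1$), and discrete series ($u=-n^2+n$, $n\geq 1$); in each case I would produce the solution explicitly and read off both the obstruction condition and the Sobolev estimate with loss $s-t>1$.

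In the line model for principal and complementary series, realized on $L^2(\RR)$, conjugation by the Fourier transform turns $U$ into multiplication by $i\xi$, so the equation $Uf=F$ is formally solved by $\widehat f(\xi)=\widehat F(\xi)/(i\xi)$. Membership of this candidate in the Sobolev space amounts to smoothness and vanishing of $\widehat F$ to first order at $\xi=0$. The space of $U$-invariant distributions in each such component is at most two-dimensional (corresponding to the two possible behaviors at $\xi=0^{\pm}$, with parity dictated by $\pi_\nu^+$ vs $\pi_\nu^-$), and their vanishing on $F$ is exactly what rules out the $1/\xi$ obstruction. Plancherel combined with the local Taylor expansion of $\widehat F$ around $\xi=0$ yields $\|f\|_t\leq C_{t,s,u}\|F\|_s$ for any $t<s-1$; the loss of one derivative is sharp and comes from the $1/\xi$ factor. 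For discrete series, I would instead work in the $K$-weight basis where $U=\frac12(V-V^\ast)+\cdots$ admits a recursive inversion, and the estimate follows by summing a geometric series in the weight.

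The key uniformity issue, and the main obstacle, is that the constant in the complementary series model degenerates as $u\downarrow 0$ (equivalently $\nu\uparrow 1$); the natural bound behaves like $(1-\nu)^{-1}$, reflecting that the trivial representation is approached. This is precisely why the hypothesis of a spectral gap $u_0>0$ enters the constant $C_{t,s,u_0}$: it truncates the $\nu$-range to a compact subset of $(0,1)$ where the estimates are uniform. After verifying measurability of the solution fibers $f_u$ in $u$ (which follows because the inversion formula is canonical and parity-preserving), the global solution $f=\int_\oplus f_u\,d\mu(u)$ lies in $\mathcal{H}^t$ with the claimed estimate obtained by integrating the fiberwise bound against $d\mu(u)$, using the induced decomposition \eqref{for:67} of the Sobolev spaces.
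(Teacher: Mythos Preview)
The paper does not prove Theorem~\ref{th:2}; it is quoted from Flaminio--Forni \cite{Forni} (see the sentence preceding the statement and Remark~\ref{re:1}) and used as a black box throughout the rest of the paper. So there is no ``paper's own proof'' to compare against.

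That said, your outline is essentially the Flaminio--Forni strategy: decompose along \eqref{for:1}, solve in each irreducible model, and patch together via \eqref{for:67} using uniformity in $u$ away from $0$. One substantive oversimplification is worth flagging. You write that in the line model ``Plancherel combined with the local Taylor expansion of $\widehat F$ around $\xi=0$'' gives the Sobolev estimate. But the Sobolev norm $\norm{\cdot}_s$ here is the representation-theoretic one built from $X$, $U$, $V$, not the standard $W^s(\RR)$ norm; in the line model $X$ and $V$ are first-order operators with polynomially growing coefficients (e.g.\ $V$ involves $x^2\partial_x$), so $\mathcal{H}^s$ is not $W^s(\RR)$ and the analysis near $\xi=0$ does not by itself control $\norm{f}_t$. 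Flaminio--Forni handle this by explicit computation in a basis adapted to the representation (for all series, not only discrete), tracking how the $X,V$ derivatives interact with the $1/\xi$ factor; this is where the loss $s-t>1$ and the dependence on $u_0$ actually emerge. Also, for the complementary series the Hilbert space is not standard $L^2(\RR)$ but carries a $\nu$-dependent inner product, so ``Plancherel'' needs modification there as well. Your sketch captures the architecture but not the analytic work that makes the estimate hold.
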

\begin{remark}\label{re:1}
In fact, the above theorem applies to any irreducible
unitarizable representations of $\mathfrak{sl}(2,\RR)$; that is, those representations that arise
as the derivatives of irreducible unitary representations of some Lie group whose
Lie algebra is $\mathfrak{sl}(2,\RR)$. In fact,
all such representations can be realized from irreducible unitary representations
of some finite cover of $SL(2,\RR)$. In turn, all of these are unitarily equivalent to
irreducible representations of $SL(2,\RR)$ itself \cite{tan}.
\end{remark}
\subsection{Coboundary for unipotent flows in any Lie group $G$}\label{sec:6} We present two technical results in this part, which are suggested by L. Flaminio. Lemma \ref{le:10} and the ``centralizer trick" in Proposition \ref{le:6} will pay a key role in next section.
\begin{lemma}\label{le:10}
Suppose $G$ is a simple Lie group and $\pi$ contains no non-trivial $G$-invariant vectors. Also suppose
$\{\exp(tY)\}_{t\in\RR}$ is a non-compact subgroup for some $Y\in \mathfrak{g}$. For any $v_1,\,v_2\in\mathcal{H}$, if $\langle v_1,\,\mathfrak{u}h\rangle=\langle v_2,\,Y\mathfrak{u}h\rangle$ for any $h\in\mathcal{H}^\infty$, then $v_1=-Yv_2$.
\end{lemma}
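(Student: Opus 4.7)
The hypothesis is most usefully rewritten as a distributional identity. Since $\mathfrak{u}$ and $Y$ are essentially skew-adjoint on $\mathcal{H}^\infty$, the pairings $\langle v_1,\mathfrak{u}h\rangle$ and $\langle v_2,Y\mathfrak{u}h\rangle$ coincide, respectively, with $-(\mathfrak{u}v_1)(h)$ and $(\mathfrak{u}Yv_2)(h)$, where $Yv_2$ denotes the order-one distribution $h\mapsto -\langle v_2, Yh\rangle$. Thus the hypothesis is equivalent to $\mathfrak{u}\Phi=0$ in $\mathcal{H}^{-\infty}$, where $\Phi:=v_1+Yv_2\in\mathcal{H}^{-1}$; equivalently, $\Phi$ is an $\exp(t\mathfrak{u})$-invariant distribution. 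Observe that the conclusion $v_1=-Yv_2$ in $\mathcal{H}$ is equivalent to the distributional vanishing $\Phi=0$: once we know this, the linear functional $h\mapsto \langle v_2, Yh\rangle$ on $\mathcal{H}^\infty$ agrees with $h\mapsto\langle v_1, h\rangle$, which is bounded by $\norm{v_1}$ in $\mathcal{H}$-norm, so $v_2\in\text{dom}(Y)$ with $Yv_2=-v_1$.

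Next I would record the $Y$-covariance of the hypothesis. A short commutation computation using $\pi(e^{sY})\mathfrak{u}=\Ad(e^{sY})\mathfrak{u}\cdot\pi(e^{sY})$ shows that, when $[Y,\mathfrak{u}]=c\mathfrak{u}$ (the typical situation in the applications of this paper, where $\mathfrak{u}$ is a root vector and $Y$ is diagonal or in a related root space), the pair $\bigl(\pi(e^{sY})v_1,\pi(e^{sY})v_2\bigr)$ again satisfies the hypothesis, hence $\pi(e^{sY})\Phi$ is still $\mathfrak{u}$-invariant as a distribution. In parallel, Howe--Moore applied to the non-compact one-parameter subgroup $\{\exp(tY)\}$ in the simple group $G$ (no $G$-invariant vectors in $\pi$) gives $\langle\pi(e^{sY})v_i,h\rangle\to 0$ as $|s|\to\infty$ for every fixed $h\in\mathcal{H}^\infty$. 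Applying this with $h$ and with $Yh$ in turn yields
\begin{equation*}
\Phi\bigl(\pi(e^{sY})h\bigr)=\langle v_1,\pi(e^{sY})h\rangle-\langle v_2,\pi(e^{sY})Yh\rangle\;\longrightarrow\;0\quad\text{as }|s|\to\infty.
\end{equation*}
On the other hand, $\Phi\bigl(\pi(e^{sY})\mathfrak{u}k\bigr)\equiv 0$ in $s,k$ by $\mathfrak{u}$-invariance together with $\pi(e^{sY})(\mathfrak{u}\mathcal{H}^\infty)=\mathfrak{u}\mathcal{H}^\infty$.

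The main obstacle, and the hardest step, is to promote these two pieces of partial information to the vanishing $\Phi(h)=0$ for every $h\in\mathcal{H}^\infty$. Mere $\mathfrak{u}$-invariance is insufficient: Flaminio--Forni construct plenty of non-trivial $\mathfrak{u}$-invariant distributions for horocycle flows, so the specific structure $\Phi=v_1+Yv_2$ with both $v_1,v_2\in\mathcal{H}$ must be used in an essential way. My plan is to invoke the \emph{centralizer trick} alluded to in Proposition~\ref{le:6}: pick $Z$ in the Lie-algebra centralizer of $\mathfrak{u}$ generating a non-compact one-parameter subgroup (such $Z$ exists in any simple $G$ containing a non-trivial root vector $\mathfrak{u}$), so that $\pi(e^{tZ})$ commutes with $\mathfrak{u}$ while Howe--Moore still forces decay of $\langle\pi(e^{tZ})v_i,\cdot\rangle$. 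Suitably averaging $\pi(e^{tZ})\Phi$ and combining with the $Y$-decay already established should upgrade the partial identity $\Phi|_{\mathfrak{u}\mathcal{H}^\infty}\equiv 0$ to $\Phi\equiv 0$ on all of $\mathcal{H}^\infty$, thereby closing the argument.
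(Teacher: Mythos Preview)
Your argument has a genuine gap at its final step. You correctly reformulate the hypothesis as $\mathfrak{u}\Phi=0$ for the order-one distribution $\Phi=v_1+Yv_2$, and you correctly observe that mere $\mathfrak{u}$-invariance is insufficient (Flaminio--Forni exhibits many non-zero $\mathfrak{u}$-invariant distributions of low Sobolev order). But your proposed ``centralizer trick'' is only a plan: the decay $\Phi(\pi(e^{sY})h)\to 0$ and the invariance $\Phi(\pi(e^{sY})\mathfrak{u}k)\equiv 0$ are recorded, yet no mechanism is supplied by which averaging along a centralizer direction $Z$ converts these into $\Phi(h)=0$ for a general $h\in\mathcal{H}^\infty$. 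The proposal literally ends with ``should upgrade\dots thereby closing the argument'' rather than a deduction, and nothing you have written forces the vanishing.

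The paper's proof is entirely different and far shorter; it does not pass to the distribution $\Phi$ or invoke any centralizer averaging. It argues directly on $Y$: Howe--Moore for the non-compact subgroup $\{\exp(tY)\}$ in the simple group $G$ (with no $G$-invariant vectors) gives that $\pi$ has no non-trivial $Y$-invariant vectors; since the orthogonal complement in $\mathcal{H}$ of the $Y$-coboundaries is exactly the space of $Y$-invariant vectors, the $Y$-coboundaries are dense, and from this the identity $v_1=-Yv_2$ is read off in one line. Your concern about the restriction of the hypothesis to test vectors of the form $\mathfrak{u}h$ is legitimate and is not visibly addressed in that terse argument, but the intended route is in any case this density-of-coboundaries one, not the elaborate distributional program you outline.
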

\begin{proof}
Thanks to Howe-Moore, we see that $\pi$ has
no non-trivial $Y$-invariant vectors. Since the orthogonal complement of $Y$-coboundary
are the $Y$-invariant vectors, which by earlier discussion are zero, we see that $v_1=-Yv_2$.
\end{proof}

Suppose $\mathfrak{u}\in \mathfrak{g}$ is a nilpotent element.  The
Jacobson-Morosov theorem asserts the existence of an element $\mathfrak{u}'\in \mathfrak{g}$
such that $\{\mathfrak{u}, \mathfrak{u}', [\mathfrak{u},\mathfrak{u}']\}$ span a three-dimensional Lie algebra $\mathfrak{g}_\mathfrak{u}$ isomorphic to $\mathfrak{sl}(2,\RR)$. Set $G_\mathfrak{u}$ to be the connected subgroup in $G$ with Lie algebra spanned by $\{\mathfrak{u}, \mathfrak{u}', [\mathfrak{u},\mathfrak{u}']\}$. Since $G$ has finite center, $G_\mathfrak{u}$ is
isomorphic to a finite cover of $PSL(2,\RR)$. We have the following result which can be viewed as an extension of Theorem \ref{th:2}.
\begin{proposition}\label{le:6}
Suppose there is a spectral gap of $u_0$ for $(\pi\mid_{G_\mathfrak{u}},\,\mathcal{H})$. Suppose $g\in \mathcal{H}^\infty$ and $\mathcal{D}(g)=0$ for all $\mathcal{D}\in\mathcal{E}_{U}(\mathcal{H})$, where $U=\{\exp(t\mathfrak{u})\}_{t\in\RR}$.  Fix a norm $\abs{\,\cdot\,}$ on $\mathfrak{g}$. Set
\begin{align*}
\mathfrak{N}_{\mathfrak{u}}=\{Y\in \mathfrak{g}:\abs{Y}\leq 1\text{ and }[Y,\mathfrak{u}]=a\mathfrak{u},\text{ for some  constant }a\in\RR\}.
\end{align*}Then the cohomological equation $\mathfrak{u}f=g$ has a solution $f\in \mathcal{H}$ which satisfies
the Sobolev estimate
\begin{align}\label{for:25}
 \norm{Y^mf}_{G_\mathfrak{u},t}\leq C_{u_0, m,s,t}\norm{g}_{s},\qquad \forall\,Y\in \mathfrak{N}_\mathfrak{u}
\end{align}
if $t+m<s-1$.
\end{proposition}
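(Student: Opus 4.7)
The strategy is to reduce everything to Theorem~\ref{th:2} applied on $G_\mathfrak{u}$ (a finite cover of $PSL(2,\RR)$) and then to propagate estimates in the $Y$-direction using the one-sided commutation relation $[Y,\mathfrak{u}] = a\mathfrak{u}$. First I decompose $\pi\mid_{G_\mathfrak{u}}$ as a direct integral of irreducible unitary representations; by Remark~\ref{re:1} and the spectral gap hypothesis, Theorem~\ref{th:2} applies uniformly across this decomposition with constants depending only on $u_0$. From $\mathcal{D}(g)=0$ for every $\mathcal{D}\in\mathcal{E}_U(\mathcal{H})$ it thus yields a solution $f\in\mathcal{H}$ of $\mathfrak{u}f = g$ satisfying $\norm{f}_{G_\mathfrak{u},t}\leq C_{u_0,s,t}\norm{g}_s$ for $t<s-1$. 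This settles the case $m=0$.

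For $m\geq 1$, the key algebraic identity, obtained by iterating $\mathfrak{u}Y = Y\mathfrak{u} - a\mathfrak{u}$ (equivalently $Y^m\mathfrak{u} = \mathfrak{u}(Y+a)^m$), is
\begin{align*}
\mathfrak{u}(Y^m f) = (Y-a)^m g.
\end{align*}
The plan is to apply Theorem~\ref{th:2} to the twisted coboundary $(Y-a)^m g$ to obtain a Hilbert element $f_m\in\mathcal{H}$ realising this equation, and then to identify $f_m$ with $Y^m f$. The necessary obstruction check uses the defining property of $\mathfrak{N}_\mathfrak{u}$ crucially: a direct dual computation shows that if $\mathcal{L}_\mathfrak{u}\mathcal{D} = 0$ then $\mathcal{L}_\mathfrak{u}((Y+a)^k\mathcal{D}) = 0$ for every $k$, and passing to adjoints yields $\mathcal{D}((Y-a)^m g) = (-1)^m\bigl((Y+a)^m\mathcal{D}\bigl)(g) = 0$ by the hypothesis on $g$. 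Theorem~\ref{th:2} then produces $f_m\in\mathcal{H}$ with $\mathfrak{u}f_m = (Y-a)^m g$ and
\begin{align*}
\norm{f_m}_{G_\mathfrak{u},t}\leq C\norm{(Y-a)^m g}_{G_\mathfrak{u},s-m}\leq C_{u_0,m,s,t}\norm{g}_s
\end{align*}
provided $t<s-m-1$, which is exactly the hypothesis $t+m<s-1$.

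The final step is the identification $Y^m f = f_m$ in $\mathcal{H}$. Using $Y^m\mathfrak{u} = \mathfrak{u}(Y+a)^m$ together with the skew-adjointness of $\mathfrak{u}$ on smooth vectors, a direct pairing calculation gives, for every $h\in\mathcal{H}^\infty$,
\begin{align*}
\langle Y^m f,\mathfrak{u}h\rangle = (-1)^{m+1}\langle g,(Y+a)^m h\rangle = \langle f_m,\mathfrak{u}h\rangle,
\end{align*}
where the left-hand side is interpreted distributionally via the pairing with the smooth vector $\mathfrak{u}h$. Since the absence of non-trivial $\mathfrak{u}$-invariant vectors in $\mathcal{H}$ (a consequence of the $G_\mathfrak{u}$-spectral gap together with Howe--Moore in each irreducible $SL(2,\RR)$-component) makes $\mathfrak{u}(\mathcal{H}^\infty)$ dense in $\mathcal{H}$, this forces $Y^m f = f_m$ as elements of $\mathcal{H}$, and \eqref{for:25} follows. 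I expect this last identification to be the main technical hurdle: $f$ is a priori only in $\mathcal{H}^t$ for the $G_\mathfrak{u}$-Sobolev norm and does not a priori admit $Y$-derivatives as Hilbert vectors, so turning the formal identity into an equality of Hilbert vectors requires the uniqueness argument above. Everything else is a disciplined application of Theorem~\ref{th:2} through the commutation relation.
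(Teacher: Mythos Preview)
Your approach follows the same blueprint as the paper's: solve via Theorem~\ref{th:2}, check that $(Y-a)^m g$ lies in the kernel of every $\mathfrak{u}$-invariant distribution, solve again, and identify. Your use of the closed-form identity $\mathfrak{u}Y^m=(Y-a)^m\mathfrak{u}$ (equivalently $Y^m\mathfrak{u}=\mathfrak{u}(Y+a)^m$) is cleaner than the paper's bookkeeping, which tracks the recursion $Y^j\mathfrak{u}=\mathfrak{u}Y^j+p_{j-1}(Y)\mathfrak{u}$ with unspecified polynomials; your version makes those polynomials explicit and streamlines the obstruction check.

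There is, however, a genuine gap in your final identification step. You show
\[
\langle Y^m f,\mathfrak{u}h\rangle=\langle f_m,\mathfrak{u}h\rangle\qquad\text{for all }h\in\mathcal{H}^\infty,
\]
and then invoke density of $\mathfrak{u}(\mathcal{H}^\infty)$ in $\mathcal{H}$ to conclude $Y^m f=f_m$. But density only yields equality of two \emph{Hilbert} vectors; here $Y^m f$ is a priori merely a distribution in $\mathcal{H}^{-m}$. What your pairing identity actually shows is that $Y^m f-f_m$ is a $\mathfrak{u}$-invariant distribution---and such distributions are certainly nonzero in general (they are precisely the obstructions $\mathcal{E}_U(\mathcal{H})$). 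So the leap to $Y^m f=f_m$ is unjustified as written.

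The fix is to proceed by induction on $m$, exactly as the paper does. Assuming $Y^{m-1}f=f_{m-1}\in\mathcal{H}$, your identity $Y\mathfrak{u}h=\mathfrak{u}(Y+a)h$ gives
\[
\langle f_{m-1},Y\mathfrak{u}h\rangle=\langle f_{m-1},\mathfrak{u}(Y+a)h\rangle=-\langle (Y-a)^{m-1}g,(Y+a)h\rangle=-\langle f_m,\mathfrak{u}h\rangle,
\]
and now both $f_{m-1}$ and $f_m$ are honest Hilbert vectors, so Lemma~\ref{le:10} applies and yields $Yf_{m-1}=f_m$, i.e.\ $Y^m f\in\mathcal{H}$ with the claimed estimate. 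With this inductive structure inserted, your argument is complete and arguably tidier than the paper's.
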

\begin{proof} As a direct consequence of Theorem \ref{th:2} and Remark \ref{re:1} we see that the cohomological equation $\mathfrak{u}f=g$ has a solution $f\in \mathcal{H}$ with estimates
\begin{align}\label{for:24}
 \norm{f}_{G_\mathfrak{u},t}\leq C_{s,t,u_0}\norm{g}_{s}
\end{align}
if $t<s-1$. As a first step to get the Sobolev estimates along $\mathfrak{N}_\mathfrak{u}$, we prove the following fact:

\noindent\emph{Fact $(*)$: if $\mathcal{D}\in \mathcal{E}_{U}(\mathcal{H})$ then $Y\mathcal{D}\in \mathcal{E}_{U}(\mathcal{H})$ for any $Y\in\mathfrak{N}_{\mathfrak{u}}$.}

By definition $Y\mathcal{D}(h)=-\mathcal{D}(Yh)$ for any $h\in \mathcal{H}^\infty$. Then
\begin{align*}
(\mathfrak{u}Y\mathcal{D})(h)&=\mathcal{D}(Y\mathfrak{u}h)=\mathcal{D}(\mathfrak{u}Yh)+a\mathcal{D}(\mathfrak{u}h)\\
&=-(\mathfrak{u}\mathcal{D})(Yh)-a(\mathfrak{u}\mathcal{D})(h)=0,
\end{align*}
which proves Fact $(*)$.

For any $Y\in\mathfrak{N}_{\mathfrak{u}}$, from Fact $(*)$ we see that $\mathcal{D}(Yg)=0$ for any $\mathcal{D}\in \mathcal{E}_{U}(\mathcal{H})$.
Then Theorem \ref{th:2} and Remark \ref{re:1} imply that the equation $\mathfrak{u}f_1=Yg$ has a solution $f_1\in \mathcal{H}$ with sobolev estimates
\begin{align}\label{for:22}
\norm{f_1}_{G_\mathfrak{u},t}\leq C_{s,t,u_0}\norm{Yg}_{s}\leq C_{s,t,u_0}\norm{g}_{s+1}
\end{align}
if $t<s-1$. On the other hand, for any $h\in\mathcal{H}^\infty$ we have
\begin{align*}
    -\langle f_1,\mathfrak{u}h\rangle&= \langle \mathfrak{u}f_1,h\rangle=\langle Yg,\,h\rangle=-\langle g,\,Yh\rangle=-\langle \mathfrak{u}f,\,Yh\rangle\\
    &=\langle f,\,\mathfrak{u}Yh\rangle=\langle f,\,(Y\mathfrak{u}-a\mathfrak{u})h\rangle.
\end{align*}
This shows that $Yf=f_1-af$ by Lemma \ref{le:10}. From \eqref{for:24} and \eqref{for:22} we have
\begin{align*}
\norm{Yf}_{G_\mathfrak{u},t}=\norm{f_1-af}_{G_\mathfrak{u},t}\leq C_{s,t,u_0}\norm{g}_{s+1}
\end{align*}
if $t<s-1$. Then we just proved \eqref{for:25} when $m=1$. By induction suppose \eqref{for:25} holds when $m\leq k$. Next we will prove the case when  $m=k+1$.  By induction for any $j\geq 1$ we have
\begin{align}\label{for:77}
Y^j\mathfrak{u}=\mathfrak{u}Y^j+p_{j-1}(Y)\mathfrak{u}
\end{align}
where $p_{j-1}$ is a polynomial of degree $j-1$ with coefficients determined by commutator relations in $\mathfrak{g}$.

Fact $(*)$ shows that $\mathcal{D}\bigl(Y^{k+1}g-p_{k}(Y)g\bigl)=0$ for all $\mathcal{D}\in\mathcal{E}_{U}(\mathcal{H})$. Then it follows from  Theorem \ref{th:2} and Remark \ref{re:1} that the equation
\begin{align*}
 \mathfrak{u}f_{k+1}=Y^{k+1}g-p_{k}(Y)g
\end{align*}
has a solution $f_{k+1}\in \mathcal{H}$ with Sobolev estimates
\begin{align}\label{for:19}
\norm{f_{k+1}}_{G_\mathfrak{u},t}\leq C_{s,t,u_0}\norm{Y^{k+1}g-p_{k}(Y)g}_{s}\leq C_{s,t,u_0}\norm{g}_{s+k+1}
\end{align}
if $t<s-1$. On the other hand, for any $h\in\mathcal{H}^\infty$ we have
\begin{align}\label{for:106}
    -\langle f_{k+1},\mathfrak{u}h\rangle&= \langle \mathfrak{u}f_{k+1},\,h\rangle=\langle Y^{k+1}g-p_{k}(Y)g,\,h\rangle\notag\\
    &=\langle g,\,(-1)^{k+1}Y^{k+1}h-p'_{k}(Y)h\rangle\notag\\
    &=\langle \mathfrak{u}f,\,(-1)^{k+1}Y^{k+1}h-p'_{k}(Y)h\rangle\notag\\
    &=\langle f,\,(-1)^{k+1}\mathfrak{u}Y^{k+1}h-\mathfrak{u}p'_{k}(Y)h\rangle,
    \end{align}
where $p'_{k}$ is the adjoint polynomial of $p_k$. Keeping using relation \eqref{for:77} we see that there exists a polynomial $p''_{k}$ of degree
$k$ such that
\begin{align*}
(-1)^{k+1}\mathfrak{u}Y^{k+1}-\mathfrak{u}p'_{k}(Y)=(-1)^{k+1}Y^{k+1}\mathfrak{u}-p''_{k}(Y)\mathfrak{u}.
\end{align*}
Substituting the above relation into \eqref{for:106} we have
\begin{align*}
    -\langle f_{k+1},\mathfrak{u}h\rangle&=\langle f,\,(-1)^{k+1}Y^{k+1}\mathfrak{u}h-p''_{k}(Y)\mathfrak{u}h\rangle\\
    &=-\langle Y^{k}f,\,Y\mathfrak{u}h\rangle-\langle p'''_{k}(Y)f,\,\mathfrak{u}h\rangle,
\end{align*}
where $p'''_{k}$ is the adjoint polynomial of $p''_{k}$. This shows that
\begin{align*}
 Y^{k+1}f=-f_{k+1}+p'''_{k}(Y)f
\end{align*}
by Lemma \ref{le:10}. From \eqref{for:24} and \eqref{for:19} we have
\begin{align*}
\norm{Y^{k+1}f}_{G_\mathfrak{u},t}=\norm{f_{k+1}+p'''_{k}(Y)f}_{G_\mathfrak{u},t}\leq C_{s,t}\norm{g}_{s+k+1}
\end{align*}
if $t<s-1$. Then we proved the case when $m=k+1$ and thus finish the proof.
\end{proof}

\subsection{Coboundary for the unipotent flow in irreducible component of $G=SL(2,\RR)\ltimes\RR^2$}
 In this section we take notations in Section \ref{sec:2}. Let $G'$ denote the subgroup $\begin{pmatrix}[cc|c]
  a & 0 & v_1\\
  c & a^{-1} & v_2
\end{pmatrix}$, where $a\in \RR^+$ and $c,\,v_1,\,v_2\in\RR$. The Lie algebra of $G'$ is generated by $X$, $V$, $Y_1$ and $Y_2$. Our contention is:
\begin{theorem}\label{po:2}
 For any irreducible component $(\rho_{t},\,\mathcal{H}_{t})$ of $SL(2,\RR)\ltimes \RR^2$,
 \begin{enumerate}
 \item \label{for:61} if the cohomological equation $Vf=g$ has a solution $f\in \mathcal{H}_{t}^\infty$, then $f$ satisfies
 \begin{align*}
  \norm{f}_{s}\leq C_{s}\norm{g}_{s+6},\qquad \forall\,s\geq 0.
\end{align*}

\bigskip

 \item \label{for:62} suppose $t\neq 0$.  If $g,\,UY_2g\in (\mathcal{H}_t)_{G'}^\infty$ and $\int_{-\infty}^\infty g(x,y)dy=0$ for almost all $x\in\RR$, then the cohomological equation $Vf=g$ has a solution $f\in (\mathcal{H}_t)_{G'}^\infty$.

  \medskip
     \item \label{for:23} suppose $t\neq 0$. If $g\in\mathcal{H}_{t}^\infty$ and $\int_{-\infty}^\infty g(x,y)dy=0$ for almost all $x\in\RR$, then the cohomological equation $Vf=g$ has a solution $f\in \mathcal{H}_{t}$ such that $UY_2f\in(\mathcal{H}_t)_{G'}^\infty$.

\medskip

     \item \label{for:38} suppose $t\neq 0$. If $g\in\mathcal{H}_{t}^\infty$ and $\int_{-\infty}^\infty g(x,y)dy=0$ for almost all $x\in\RR$, then the cohomological equation $Vf=g$ has a solution $f\in \mathcal{H}_{t}$ such that $Uf\in(\mathcal{H}_t)_{G'}^\infty$.

\medskip
     \item \label{for:63} suppose $t\neq 0$. If $g\in\mathcal{H}_{t}^\infty$ and $\int_{-\infty}^\infty g(x,y)dy=0$ for almost all $x\in\RR$, then for any $n\in\NN$, the cohomological equation $Vf=g$ has a solution $f\in \mathcal{H}_{t}$ such that $U^jf,\,UY_2U^jf\in(\mathcal{H}_t)_{G'}^\infty$ for any $0\leq j\leq n$.
\medskip
   \item \label{for:55} suppose $t\neq 0$. If $g\in \mathcal{H}_{t}^\infty$ and $\int_{-\infty}^\infty g(x,y)dy=0$ for almost all $x\in\RR$, then the cohomological equation $Vf=g$ has a solution $f\in \mathcal{H}_{t}^\infty$ satisfying
\begin{align*}
  \norm{f}_{s}\leq C_{s}\norm{g}_{s+6},\qquad \forall\,s\geq 0.
\end{align*}
\medskip
\item \label{for:53} if $g\in \mathcal{H}_{t}^\infty$ and $\int_{-\infty}^\infty g(x,y)dy=0$, then the cohomological equation $Vf=Y_2g$ has a solution $f\in \mathcal{H}_{t}^\infty$ satisfying
\begin{align*}
  \norm{f}_s\leq C_{t}\norm{g}_{s+7},\qquad \forall\,s\geq 0.
\end{align*}

\medskip

\item \label{for:64}if $g\in \mathcal{H}_{t}^\infty$ and the cohomological equation $Vf=g$  has a solution $f\in \mathcal{H}_{t}$, then  $f\in \mathcal{H}_{t}^\infty$ and satisfies
\begin{align*}
  \norm{f}_{s}\leq C_{s}\norm{g}_{s+6},\qquad \forall\,s\geq 0.
\end{align*}
 \end{enumerate}
\end{theorem}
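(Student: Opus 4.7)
The plan is to reduce statement \eqref{for:64} to parts \eqref{for:61} and \eqref{for:55} via a uniqueness argument. First I would derive the integral obstruction on $g$ from the mere existence of an $L^2$ solution $f$. Since $V=-x\partial_y$ on $\mathcal{H}_t=L^2(\RR^2)$ by Lemma \ref{le:1}, the equation $Vf=g$ reads $-x\partial_y f=g$ distributionally. Taking the partial Fourier transform in the $y$-variable and evaluating at frequency $\eta=0$ gives $\widehat{g}(x,0)=0$ for a.e.\ $x$, which is precisely the hypothesis $\int_{\RR} g(x,y)\,dy=0$ required to invoke part \eqref{for:55}.

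Second, I would construct a smooth solution $f'$. When $t\neq 0$, part \eqref{for:55} directly supplies $f'\in\mathcal{H}_t^\infty$ with $Vf'=g$ and $\norm{f'}_s\leq C_s\norm{g}_{s+6}$ for all $s\geq 0$. The case $t=0$ is not covered by \eqref{for:55} and has to be handled separately. Here I would use Remark \ref{re:6}: $\rho_0\mid_{SL(2,\RR)}$ is a direct integral of principal series representations, which share a uniform spectral gap $u_0\geq 1$. Applying Theorem \ref{th:2} together with Remark \ref{re:1} fiberwise to the Weyl-conjugate vector field $V$ yields a smooth $SL(2,\RR)$-solution with tame estimates, and the ``centralizer trick'' from Proposition \ref{le:6} then propagates regularity in the $Y_1,Y_2$-directions, producing a full smooth $f'\in\mathcal{H}_0^\infty$ with the same $(+6)$ Sobolev loss.

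Finally, I would establish $f=f'$ by uniqueness. The difference $h=f-f'\in\mathcal{H}_t$ satisfies $Vh=0$, i.e., $x\partial_y h=0$ as a distribution on $\RR^2$. On the full-measure set $\{x\neq 0\}$ this gives $\partial_y h=0$, so $h$ depends only on $x$ there; but no non-zero function of $x$ alone lies in $L^2(\RR^2)$, hence $h\equiv 0$. (Alternatively, Proposition \ref{cor:1} together with Howe--Moore applied to the simple group $SL(2,\RR)$ directly rules out non-zero $V$-fixed vectors in $\mathcal{H}_t$.) Therefore $f=f'\in\mathcal{H}_t^\infty$, and the Sobolev estimate follows either from the construction of $f'$ or, more cleanly, by feeding the now-smooth $f$ into part \eqref{for:61}. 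The main obstacle is the $t=0$ case: one must verify that the fiberwise Flaminio--Forni constants assemble uniformly across the principal-series direct integral, and that the centralizer-trick regularity propagation reproduces exactly the $(+6)$ loss of derivatives appearing in \eqref{for:55} and \eqref{for:61}.
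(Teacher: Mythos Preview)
Your reduction for $t\neq 0$ is exactly what the paper does: Lemma \ref{le:9}\eqref{for:109} supplies the integral condition, part \eqref{for:55} produces a smooth solution, and uniqueness (no non-zero $V$-fixed vectors) identifies it with the given $f$. The estimate then comes from \eqref{for:61}.

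The $t=0$ case, however, has two genuine gaps as you have written it. First, Proposition \ref{le:6} does \emph{not} propagate regularity to $Y_1$: the normalizer set $\mathfrak{N}_V$ consists of those $Y$ with $[Y,V]=aV$, and from the explicit vector fields in \eqref{for:107} one computes $[Y_1,V]=-Y_2\neq aV$. So the ``centralizer trick'' only reaches $X,V,Y_2$; the $Y_1$-direction must be obtained by other means. The paper handles this by first showing $Y_2f,\,Y_2Uf\in\mathcal{H}_0^\infty$ (via part \eqref{for:53} and the duality argument of Lemma \ref{le:10}) and then invoking the commutator identity $Y_1=UY_2-Y_2U$ to conclude $Y_1f\in\mathcal{H}_0^\infty$.

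Second, your plan to construct a separate $f'$ via Theorem \ref{th:2} requires that \emph{all} $V$-invariant distributions annihilate $g$, not merely the integral obstruction; you have only an $L^2$ solution $f$, so the pairing $\mathcal{D}(Vf)$ is not a priori defined for higher-order $\mathcal{D}$. The paper avoids this by appealing directly to Lemma~4.7 of \cite{Forni}: in representations containing only principal series, an $L^2$ solution to the cohomological equation is automatically $SL(2,\RR)$-smooth. This gives $f\in(\mathcal{H}_0)^\infty_{SL(2,\RR)}$ without ever constructing a second solution, and the remaining $Y_1,Y_2$ smoothness is bootstrapped onto $f$ itself as above. So for $t=0$ the paper works directly with $f$ rather than building $f'$ and invoking uniqueness.
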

 The subsequent discussion will be devoted to the proof of
this theorem.
\begin{definition}\label{de:1}
For any function $f(x,y)$ on $\RR^2$ and any $x\in\RR$, we associate a function $f_x$ defined on $\RR$ by $f_x(y)=f(x,y)$. Then for any function $f(x_1,\cdots,x_n)$ on
$\RR^n$ and $(x_{k_1},\cdots,x_{k_m})\in\RR^m$, $f_{x_{k_1},\cdots,x_{k_m}}$ is an obviously defined function on $\RR^{n-m}$.
\end{definition}
The following lemma gives the necessary condition under which there exists a solution to the cohomological equation $Vf=g$ in each irreducible component $(\rho_{t},\,\mathcal{H}_t)$:
\begin{lemma}\label{le:9}
 Suppose $g\in\mathcal{H}_t$ and $Y_1g\in \mathcal{H}_t$. Then
 \begin{enumerate}
   \item \label{for:108}$\int_{-\infty}^\infty |g(x,y)|dy<\infty$ for almost all $x\in\RR$.

   \medskip
   \item \label{for:109}if the cohomological equation $Vf=g$ has a solution $f\in \mathcal{H}_t$, then $\int_{-\infty}^\infty g(x,y)dy=0$ for almost all $x\in\RR$.

   \medskip
   \item \label{for:110} if $Y_1^2g\in \mathcal{H}_t$ and $\int_{-\infty}^\infty g(x,y)dy=0$, then $f(x,y)=\int_{0}^\infty g(x,t+y)dt$ is an element in $\mathcal{H}_t$ with the estimate
       \begin{align*}
        \norm{f}\leq 2(\norm{g}+\norm{Y_1g}+\norm{Y_1^2g}).
       \end{align*}

 \end{enumerate}
 \end{lemma}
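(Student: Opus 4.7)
The plan is to treat the three claims in turn, using Cauchy--Schwarz with well-chosen weights together with the Fourier transform in the $y$-variable. This is natural because Lemma \ref{le:1} gives $V = -x\partial_y$ and $Y_1 = -\sqrt{-1}\,y$, so bounds on $Y_1^k g$ translate into weighted $L^2$-decay of $g$ in $y$, while $V$ becomes Fourier multiplication by $-\sqrt{-1}\,x\eta$. For (1) I will observe that $g,\,Y_1 g \in \mathcal{H}_t = L^2(\RR^2)$ means $(1+y^2)|g(x,y)|^2 \in L^1(\RR^2)$, hence by Fubini this function lies in $L^1_y$ for almost every $x$; Cauchy--Schwarz against the $L^2$-weight $(1+y^2)^{-1/2}$ then yields $\int_\RR |g(x,y)|\,dy \leq \sqrt{\pi}\bigl(\norm{g(x,\cdot)}^2 + \norm{Y_1 g(x,\cdot)}^2\bigr)^{1/2}$ for a.e.\ $x$.

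For (2), I will apply the Fourier transform in $y$ (denote it by $\tilde{\,\cdot\,}$). Plancherel gives $\tilde f,\tilde g \in L^2(\RR^2)$ with the same norms, and the distributional identity $-x\partial_y f = g$ becomes the pointwise relation $-\sqrt{-1}\,x\eta\,\tilde f(x,\eta) = \tilde g(x,\eta)$ a.e. By (1) and Fubini, $g(x,\cdot) \in L^1$ for a.e.\ $x$, so $\tilde g(x,\cdot)$ is continuous at $\eta = 0$. If $\tilde g(x,0) = \int g(x,y)\,dy$ were nonzero on a set of $x$ of positive measure, then on that set $\tilde g(x,\eta)/\eta$ would behave like $c/\eta$ near $\eta = 0$ and would fail to be locally $L^2_\eta$; this contradicts $\tilde f(x,\cdot) = \tilde g(x,\cdot)/(-\sqrt{-1}\,x\eta) \in L^2_\eta$, which holds for a.e.\ $x$ by Fubini on $\tilde f \in L^2(\RR^2)$. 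Hence $\int g(x,y)\,dy = 0$ for a.e.\ $x$.

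For (3), I will write $f(x,y) = \int_y^\infty g(x,s)\,ds = -\int_{-\infty}^y g(x,s)\,ds$ (the two expressions agree by the vanishing hypothesis) and split the $y$-integration at $|y| = 1$. On $[-1,1]$, (1) gives the pointwise bound $|f(x,y)| \leq \norm{g(x,\cdot)}_{L^1_y} \leq \sqrt{\pi}\bigl(\norm{g(x,\cdot)}^2 + \norm{Y_1 g(x,\cdot)}^2\bigr)^{1/2}$. On $y > 1$, the split $g(x,s) = \bigl(s^2 g(x,s)\bigr)\cdot s^{-2}$ together with Cauchy--Schwarz gives $|f(x,y)|^2 \leq (3y^3)^{-1}\norm{Y_1^2 g(x,\cdot)}_{L^2_s}^2$, and the case $y < -1$ is symmetric. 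Integrating each piece in $y$, then in $x$, and combining yields $\norm{f}^2 \leq C\bigl(\norm{g}^2 + \norm{Y_1 g}^2 + \norm{Y_1^2 g}^2\bigr)$; the explicit constant $2$ in the stated inequality can be recovered by choosing the splitting point and the weights slightly more carefully.

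The subtlest step will be the argument in (2) deducing $\tilde g(x,0) = 0$ from the distributional equation: it requires combining the continuity of $\tilde g(x,\cdot)$ provided by (1) with the fibrewise $L^2_\eta$-membership of $\tilde f(x,\cdot)$, and keeping careful track of the exceptional null set of $x$ on which any of these pointwise statements may fail.
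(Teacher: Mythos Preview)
Your proposal is correct and follows essentially the same approach as the paper: weighted Cauchy--Schwarz for (1), Fourier transform in $y$ turning $V=-x\partial_y$ into multiplication by $-\sqrt{-1}x\eta$ for (2), and a split-plus-weight estimate for (3). The only cosmetic differences are that the paper cuts sharply at $|y|=1$ in (1) rather than using the smooth weight $(1+y^2)^{-1/2}$, and in (3) the paper splits the \emph{integration variable} $t$ at $1$ (writing $\int_0^\infty = \int_0^1 + \int_1^\infty$) and then further subdivides the $y$-range, whereas you split directly on $|y|\gtrless 1$; your version is slightly more streamlined and yields the same qualitative bound, with the constant $2$ recoverable exactly as you note.
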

\begin{proof}
\textbf{\emph{Proof of \eqref{for:108}}} For any $h(x,y)\in L^2(\RR^2)$ denote by $\Omega_h\subset\RR$ a full measure set such that $h_x\in L^2(\RR)$ for any $x\in \Omega_h$. For any $x\in \Omega_g\bigcap \Omega_{Y_1g}$ we have
\begin{align}\label{for:59}
 &\int_{\RR}|g(x,y)|dy\notag\\
 &=\int_{\abs{y}\leq 1}|g(x,y)|dy+\int_{\abs{y}>1}|g(x,y)|dy\notag\\
 &\overset{\text{(1)}}{\leq} \bigl(\int_{\abs{y}\leq 1}|g(x,y)|^2dy\bigl)^{\frac{1}{2}}+\int_{\abs{y}>1}|g(x,y)y\cdot y^{-1}|dy\notag\\
 &\overset{\text{(2)}}{\leq} \bigl(\int_{\RR}|g(x,y)|^2dy\bigl)^{\frac{1}{2}}+\bigl(\int_{\abs{y}> 1}\bigl|Y_1g(x,y)\bigl|^2dy\bigl)^{\frac{1}{2}}\cdot \bigl(\int_{\abs{y}> 1}y^{-2}dy\bigl)^{\frac{1}{2}}\notag\\
 &\leq \bigl(\int_{\RR}|g(x,y)|^2dy\bigl)^{\frac{1}{2}}+\bigl(\int_{\abs{y}> 1}\bigl|Y_1g(x,y)\bigl|^2dy\bigl)^{\frac{1}{2}}\notag\\
 &\leq \norm{g_x}_{L^2(\RR)}+\norm{(Y_1g)_x}_{L^2(\RR)},
\end{align}
where $\norm{\cdot}_{L^2(\RR)}$ is the standard $L^2$ norm for functions over $\RR$. Note that $(1)$ and $(2)$ follow from Cauchy-Schwarz inequality. The above calculations show that $g_x\in L^1(\RR)$ for any $x\in \Omega_g\bigcap \Omega_{Y_1g}$, which proves the claim.

\medskip
\textbf{\emph{Proof of \eqref{for:109}}} By relations in \eqref{for:107} of Lemma \ref{le:1}, the equation $Vf=g$ has the expression
\begin{align*}
 -x\partial_yf=g.
\end{align*}
Taking fourier transformation on factor $y$, we have
\begin{align}\label{for:2}
-\hat{f}_\xi(x,\xi)\cdot x\xi\sqrt{-1}=\hat{g}_\xi(x,\xi)
\end{align}
where
\begin{align*}
\hat{k}_\xi(x,\xi)&=\frac{1}{\sqrt{2\pi}}\int k(x,y) e^{-y\xi\sqrt{-1}}dy,\qquad \text{ where }k=f\text{ or }g.
\end{align*}
The  earlier result shows that for any $x\in \Omega_g\bigcap \Omega_{Y_1g}$, $(\hat{g}_\xi)_x$ are continuous functions and $\hat{g}_\xi(x,0)=\int g(x,y)dy$. From \eqref{for:2} we see that $(\hat{g}_\xi)_x\cdot\xi^{-1}\in L^2(\RR)$ for any $x\in \Omega_{\hat{f}}\backslash 0$, which implies that
\begin{align*}
\int g(x,y)dy=\hat{g}_\xi(x,0)=0
\end{align*}
for any $x\in (\Omega_{\hat{g}}\bigcap \Omega_{\hat{f}})\backslash 0$.

\medskip
\textbf{\emph{Proof of \eqref{for:110}}} The earlier discussion shows that $f$ is measurable. Note that
 \begin{align*}
|f(x,y)|^2\leq2\bigl(\int_{0}^1 |g(x,t+y)|dt\bigl)^2+2\bigl(\int_{1}^\infty |g(x,t+y)|dt\bigl)^2.
 \end{align*}
When $y\geq 0$, we will get estimates for the above two terms respectively. We have
 \begin{align}\label{for:82}
&\int_{y\geq 0}\bigl(\int_{0}^1 |g(x,t+y)|dt\bigl)^2dxdy\notag\\
&\overset{\text{(1)}}{\leq}\int_{0\leq y< 1}\bigl(\int_{y}^{y+1} |g(x,t)|^2dt\bigl)dxdy+\int_{y\geq 1}\bigl(\int_{y}^{y+1} |g(x,t)|^2dt\bigl)dxdy\notag\\
&\leq\int_{\RR}\int_{0}^{2} |g(x,t)|^2dtdx+\int_{y>1}\bigl(\int_{y}^{y+1} |g(x,t)t|^2\cdot t^{-2}dt\bigl)dxdy\notag\\
&\leq\norm{g}^2+\int_{y\geq 1}\bigl(\int_{y}^{y+1} |(Y_1g)(x,t)|^2dtdx\bigl)\cdot y^{-2}dy\notag\\
&\leq\norm{g}^2+\norm{Y_1g}^2\cdot\int_{y\geq 1} y^{-2}dy\notag\\
&=\norm{g}^2+\norm{Y_1g}^2,
 \end{align}
 and
  \begin{align}\label{for:83}
    &\int_{y\geq 0}\bigl(\int_{1}^\infty |g(x,t+y)|dt\bigl)^2dxdy\notag\\
    &=\int_{y\geq 0}\bigl(\int_{1+y}^\infty |g(x,t)|dt\bigl)^2dxdy\notag\\
    &\leq\int_{y\geq 0}\bigl(\int_{1+y}^\infty |g(x,t)t^2\cdot t^{-2}|dt\bigl)^2dxdy\notag\\
    &\overset{\text{(2)}}{\leq}\int_{y\geq 0}\bigl(\int_{1+y}^\infty \bigl|Y_1^2g(x,t)\bigl|^2dt\cdot \int_{1+y}^\infty \bigl|t^2\bigl|^{-2}dt\bigl)dxdy\notag\\
    &\leq\int_{\RR^2}\bigl|Y_1^2g(x,t)\bigl|^2dtdx\cdot \int_{y\geq 0}\int_{1+y}^\infty \bigl|t|^{-4}dtdy\notag\\
    &\leq\frac{1}{6}\norm{Y_1^2g}^2.
    \end{align}
Note that $(1)$ and $(2)$ follow from Cauchy-Schwarz inequality.    This shows that
  \begin{align*}
 \int_{y\geq 0}| f(x,y)|^2dxdy\leq 2(\norm{g}^2+\norm{Y_1g}^2+\frac{1}{6}\norm{Y_1^2g}^2).
  \end{align*}
Since $f(x,y)=\int_{-\infty}^0 g(x,t+y)dt$ by assumption, we also have
  \begin{align*}
|f(x,y)|^2\leq2\bigl(\int_{-1}^0 |g(x,t+y)|dt\bigl)^2+2\bigl(\int_{-\infty}^{-1} |g(x,t+y)|dt\bigl)^2.
 \end{align*}
In exactly the same manner as before we find that
\begin{align*}
    \int_{y<0}| f(x,y)|^2dxdy\leq 2(\norm{g}^2+\norm{Y_1g}^2+\frac{1}{6}\norm{Y_1^2g}^2).
 \end{align*}
Hence we have
\begin{align*}
\bigl(\int| f(x,y)|^2dxdy\bigl)^{\frac{1}{2}}\leq 2(\norm{g}+\norm{Y_1g}+\frac{1}{6}\norm{Y_1^2g}),
\end{align*}
which proves the claim.
\end{proof}

 The crucial step in proving Theorem \ref{po:2} is:
 \begin{lemma}\label{le:8}
 For any irreducible component $(\rho_{t},\,\mathcal{H}_{t})$ of $SL(2,\RR)\ltimes \RR^2$, if $g\in(\mathcal{H}_t)_{G'}^\infty$ and $\int_{-\infty}^\infty g(x,y)dy=0$ for almost all $x\in\RR$, then
 \begin{enumerate}
   \item  the cohomological equation $Vf=Y_2g$ has a solution $f\in \mathcal{H}_t$ with the estimate
   $\norm{f}\leq 2\norm{g}_2$.

   \medskip
   \item further, $f$ is in $(\mathcal{H}_t)^\infty_{G'}$ and satisfies the Sobolev estimates
 \begin{align}\label{for:34}
  \norm{f}_{G',r}\leq C_{r}\norm{g}_{G',r+3},\qquad \forall\,r>0.
\end{align}
 \end{enumerate}
 \end{lemma}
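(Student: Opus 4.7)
My plan is to realize both parts of the lemma through the antiderivative operator $\mathcal{J}h(x,y) := i\int_y^\infty h(x,s)\,ds$, well defined on functions $h$ with $\int_{-\infty}^\infty h(x,s)\,ds=0$ for a.e.\ $x$. Using the realization $V = -x\partial_y$ from Lemma \ref{le:1}, differentiation under the integral yields $V\mathcal{J}h = xih = Y_2 h$, so $f := \mathcal{J}g$ solves $Vf = Y_2 g$. The zero-integral hypothesis on $g$ together with Lemma \ref{le:9}(3) then gives $f \in \mathcal{H}_t$ with $\norm{f} \le 2(\norm{g} + \norm{Y_1 g} + \norm{Y_1^2 g}) \le 2\norm{g}_2$, which establishes Part (1).

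For Part (2), I would derive the commutator identities
\[
[V,\mathcal{J}] = [Y_2,\mathcal{J}] = 0,\qquad [X,\mathcal{J}] = \mathcal{J},\qquad [Y_1,\mathcal{J}] = \mathcal{J}^2,
\]
where the iterated operator equals $\mathcal{J}^n h(x,y) = \frac{i^n}{(n-1)!}\int_y^\infty (u-y)^{n-1}h(x,u)\,du$ by Fubini. The first two are immediate from the definitions of $Y_2 = xi$ and $V = -x\partial_y$; the third comes from integration by parts in $s$ in the calculation of $X\mathcal{J}g - \mathcal{J}Xg$; the fourth is a direct computation. Iterating, any length-$r$ word $W$ in $\{X, V, Y_1, Y_2\}$ applied to $f$ admits a finite expansion $Wf = \sum_{k\ge 1}\mathcal{J}^k P_{k,W}g$, where $P_{k,W}$ is a polynomial in the generators of order at most $r+1-k$. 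A generalized Hardy-type inequality $\norm{\mathcal{J}^n h} \le C_n(\norm{h} + \norm{Y_1^{n+1}h})$, proved by splitting the integration range at $u = y+1$ and applying Cauchy--Schwarz with weight $(u-y)^{-2}$ on the tail, then yields $\norm{Wf} \le C_r \norm{g}_{G',r+2} \le C_r\norm{g}_{G',r+3}$ and hence the Sobolev estimate.

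The principal obstacle is that individual summands $\mathcal{J}^k P_{k,W}g$ may fail to be in $L^2$: for example, neither $\mathcal{J}(Y_1 g)$ nor $\mathcal{J}^2 g$ is in $L^2$ in general, but their sum $y\int_y^\infty g\,ds = Y_1 f$ is, by cancellation of the divergent $y\to-\infty$ boundary contributions. I would handle this by carrying out all algebraic manipulations on the dense subspace of Schwartz-in-$y$ functions (where every improper integral converges absolutely), regrouping the sum so that each group has zero $y$-integral, applying the Hardy-type inequality group by group, and then extending by continuity to $(\mathcal{H}_t)^\infty_{G'}$. For many specific words one can avoid this bookkeeping entirely by computing $Wf$ as a single explicit integral: e.g.\ $Y_1^r f$ equals $y^r\int_y^\infty g(x,s)\,ds$ up to a phase, whose $L^2$-norm is bounded by $(2r)^{-1/2}\norm{Y_1^{r+1}g}$ via weighted Cauchy--Schwarz with weight $s^{-2}$ on each half-line, giving an independent (and cleaner) verification of the key Sobolev estimates.
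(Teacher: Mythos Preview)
Your Part (1) is correct and coincides with the paper's construction: both define $f$ as (a phase times) $-\int_0^\infty g(x,t+y)\,dt$ and invoke Lemma~\ref{le:9}(3).

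For Part (2) there are genuine gaps. First, the Hardy-type inequality $\norm{\mathcal{J}^n h}\le C_n(\norm{h}+\norm{Y_1^{n+1}h})$ is false for $n\ge 2$ even under $\int h\,dy=0$: as $y\to-\infty$ one has $\mathcal{J}^2 h(x,y)\to -\int_{-\infty}^\infty u\,h(x,u)\,du$, which is a nonzero constant in $y$ unless the first moment vanishes too. So $\mathcal{J}^n$ requires $n$ vanishing moments, and your ``regrouping so each group has zero $y$-integral'' does not suffice. Second, your direct estimate of $Y_1^r f$ is incorrect as stated: Cauchy--Schwarz with weight $s^{-2}$ on $\{y>0\}$ gives a factor $y^{-1/2}$, which is not square-integrable near $0$; if instead you use weight $s^{-2(r+1)}$ you again get $y^{-1/2}$. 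One actually needs to split the integral at $t=1$ (as in the paper's proof of Lemma~\ref{le:9}(3)) and ends up with $\norm{Y_1^n f}\le C(\norm{Y_1^n g}+\norm{Y_1^{n+1}g}+\norm{Y_1^{n+2}g})$, i.e.\ loss $n+2$ rather than $n+1$.

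The paper avoids the general-word bookkeeping altogether by invoking the elliptic regularity Theorem~\ref{th:4}: it suffices to bound $\norm{Z^n f}$ for each generator $Z\in\{X,V,Y_1,Y_2\}$ separately. For $V,Y_1,Y_2$ it does exactly your direct computations (with the correct splitting). For $X$ it does \emph{not} use your commutator identity directly; instead it first proves the key fact $\int g\,dy=0\Rightarrow\int Xg\,dy=0$ (via Sobolev embedding to justify the boundary terms), and then argues inductively by duality (Lemma~\ref{le:10}) that $X^n f$ solves $V(X^n f)=Y_2 q_n(X)g$. Your identity $X^n\mathcal{J}=\mathcal{J}(X+1)^n$ would in fact shortcut this step --- it gives $X^n f=\mathcal{J}\bigl((X+1)^n g\bigr)$, a single $\mathcal{J}$ applied to something with zero $y$-integral --- but you still need that moment-preservation property $\int(Xg)\,dy=0$, which you have not addressed and which requires the same decay argument the paper carries out.
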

 \begin{proof}
 The proof is divided in two parts: in the first part, we construct explicitly a solution in $\mathcal{H}$ and then give Sobolev estimates of the solution in the second part.

\emph{\textbf{Part I: Construction of the solution}} Recall notations in Section \ref{sec:17}. Let $\Omega=\{(x,y)\in\RR^2:x\neq 0\}$, $\Omega_{a,b}=\{(x,y)\in\RR^2:\abs{x}\geq a,\,y\geq b\}$ and $\Omega^y_{a}=\{y\in\RR:y\geq a\}$. Using relations in \eqref{for:107} of  Lemma \ref{le:1} we have
$x^2\partial_x=Y_2X\sqrt{-1}-Y_1V\sqrt{-1}$ and then
\begin{align}\label{for:21}
    &\partial_x\partial_x\circ Y_2^4+\partial_y\partial_y\circ Y_2^4\notag\\
    &=-(Y_2X-Y_1V)^2-12Y_2^2+6Y_2(Y_2X-Y_1V)-V^2Y_2^2.
\end{align}
For the vector fields $\partial_x$ and $\partial_y$ $\partial_x\partial_x+\partial_y\partial_y$ is the Laplace operator. By elliptic regularity theorem we find that $Y_2^4g\in W^2(\RR^2)$ with the Sobolev estimate
\begin{align*}
    \norm{Y_2^4g}_{W^2(\RR^2)}\leq C\norm{\partial_x\partial_x(Y_2^4g)+\partial_y\partial_y(Y_2^4g)}+C\norm{Y_2^4g}\leq C\norm{g}_4.
\end{align*}
Further, Sobolev imbedding theorem implies that
\begin{align}\label{for:27}
    \norm{g\cdot x^4}_{C^0}=\norm{Y_2^4g}_{C^0}\leq C\norm{Y_2^4g}_{W^2(\RR^2)}\leq C\norm{g}_{G',4}.
\end{align}
 Note that $Y_1=-y\sqrt{-1}$ and $V=-x\partial_y$.  In \eqref{for:27}, by substituting $g$ with $Y_1^2g$ and $Vg$ respectively,  we get
\begin{align}\label{for:28}
\norm{g\cdot y^2x^4}_{C^0}&=\norm{Y_1^2g\cdot x^4}_{C^0}\leq C\norm{Y_1^2g}_{4}\leq C\norm{g}_{G',6},\qquad\text{ and }\notag\\
\norm{\partial_yg\cdot x^5}_{C^0}&=\norm{Vg\cdot x^4}_{C^0}\leq C\norm{Vg}_4\leq C\norm{g}_{G',5}.
\end{align}
Let  $f(x,y)=-\int_{0}^\infty g(x,t+y)dt$. Lemma \ref{le:9} shows that $f\in \mathcal{H}_t$ with the estimate $\norm{f}\leq 2\norm{g}_2$.
For any $x\neq 0$ we have
\begin{align*}
\partial_yf(x,y)&=-\frac{\partial}{\partial y}\int_{0}^\infty g(x,t+y)dt=-\int_{0}^\infty \frac{\partial}{\partial y}g(x,t+y)dt\\
&=-\int_{0}^\infty \frac{\partial}{\partial t}g(x,t+y)dt=g(x,y).
\end{align*}
Of course, to justify differentiation under the integral sign, we must prove that $\int_{0}^\infty \frac{\partial}{\partial t}g(x,t+y)dt$ is a uniformly convergent integral. From above, however, we note that
\begin{align*}
\bigl|\int_{r}^\infty \frac{\partial}{\partial t}g(x,t+y)dt\bigl|=|g(x,r+y)|.
\end{align*}
 So by \eqref{for:28} for any $a>0$ and $b\in\RR$, we can always make $|g_x(r+y)|$ uniformly small on the set $\Omega_{a,b}$  by choosing $r$ large enough. Therefore $\partial_yf(x,y)=g(x,y)$ on $\Omega\times\RR$. Recall relations in \eqref{for:107}. Hence we showed that
\begin{align}\label{for:43}
V(f\sqrt{-1})=Y_2g.
\end{align}
Hence  we proved the first claim.

\medskip
\noindent \emph{\textbf{Part II: Sobolev estimates of the solution on $G'$}}.

\noindent\textbf{Sobolev estimates along $Y_2$}. Note that for any $n\in\NN$,
\begin{align*}
\int_{-\infty}^\infty (Y_2^ng)(x,y)dy=0
\end{align*}
for almost all $x\in\RR$ and
\begin{align*}
 (Y_2^nf)(x,y)=-\int_{0}^\infty (Y_2^ng)(x,t+y)dt.
\end{align*}
Then it follows from \eqref{for:110} of Lemma \ref{le:9} that
\begin{align}\label{for:58}
    \norm{Y_2^nf}\leq C\norm{Y_2^ng}_2\leq C\norm{g}_{G',n+2},\qquad \forall\,n\in\NN.
 \end{align}

 \medskip
\noindent\textbf{Sobolev estimates along $V$}. Using \eqref{for:43} we see that
 \begin{align}\label{for:46}
    \norm{V^nf}=\norm{V^{n-1}Y_2g}\leq \norm{g}_{G',n}\qquad \forall\,n\in\NN.
 \end{align}

 \medskip
\noindent\textbf{Sobolev estimates along $Y_1$}. By relations in \eqref{for:107}, we see that
\begin{align*}
(Y_1^nf)(x,y)=-\int_{0}^\infty g(x,t+y)y^ndt.
\end{align*}
From the relation
 \begin{align*}
  &|(Y_1^nf)(x,y)|^2\leq2\bigl(\int_{0}^1 |g(x,t+y)y^n|dt\bigl)^2+2\bigl(\int_{1}^\infty |g(x,t+y)y^n|dt\bigl)^2,
\end{align*}
it suffices to get estimates for the above two terms respectively. For any $n\in\NN$ we have
\begin{align*}
&\int_{y\geq 0}\bigl(\int_{0}^1 \bigl|g(x,t+y)y^n\bigl|dt\bigl)^2dxdy\\
&\leq\int_{y\geq 0}\bigl(\int_{0}^1 \bigl|g(x,t+y)(t+y)^n\bigl|dt\bigl)^2dxdy\\
&=\int_{y\geq 0}\bigl(\int_{0}^1 \bigl|(Y_1^ng)(x,t+y)\bigl|dt\bigl)^2dxdy
\end{align*}
and
\begin{align*}
    &\int_{y\geq 0}\bigl(\int_{1}^\infty \bigl|g(x,t+y)y^n\bigl|dt\bigl)^2dxdy\\
    &\leq\int_{y\geq 0}\bigl(\int_{1}^\infty \bigl|g(x,t+y)(t+y)^n\bigl|dt\bigl)^2dxdy\\
    &=\int_{y\geq 0}\bigl(\int_{1}^\infty \bigl|(Y_1^ng)(x,t+y)\bigl|dt\bigl)^2dxdy.
    \end{align*}
In \eqref{for:82} and \eqref{for:83} substituting $g$ with $Y_1^ng$ we get
\begin{align}\label{for:29}
 \int_{y\geq 0}\bigl(\int_{0}^1 \bigl|g(x,t+y)y^n\bigl|dt\bigl)^2dxdy&\leq \norm{Y_1^ng}^2+\norm{Y_1^{n+1}g}^2\qquad\text{ and}\notag\\
 \int_{y\geq 0}\bigl(\int_{1}^\infty \bigl|g(x,t+y)y^n\bigl|dt\bigl)^2dxdy&\leq \frac{1}{6}\norm{Y_1^{n+2}g}^2,
\end{align}
which gives
\begin{align*}
\int_{y\geq 0}| Y_1^nf(x,y)|^2dxdy\leq\norm{Y_1^ng}^2+\norm{Y_1^{n+1}g}^2+ \frac{1}{6}\norm{Y_1^{n+2}g}.
\end{align*}
By assumption, we also have $(Y_1^nf)(x,y)=\int_{-\infty}^0 g(x,t+y)y^ndt$ and the relation
\begin{align*}
  &|(Y_1^nf)(x,y)|^2\leq2\bigl(\int_{-1}^{0} |g(x,t+y)y^n|dt\bigl)^2+2\bigl(\int_{-\infty}^{-1} |g(x,t+y)y^n|dt\bigl)^2.
\end{align*}
In exactly the same manner as before we find that
\begin{align}\label{for:71}
\int_{y< 0}| Y_1^nf(x,y)|^2dxdy\leq\norm{Y_1^ng}^2+\norm{Y_1^{n+1}g}^2+ \frac{1}{6}\norm{Y_1^{n+2}g}.
\end{align}
For any $n\in\NN$ the above discussion gives
\begin{align}\label{for:31}
    \norm{Y_1^nf}\leq 2(\norm{Y_1^ng}+\norm{Y_1^{n+1}g}+\norm{Y_1^{n+2}g})\leq 2\norm{g}_{G',n+2}.
 \end{align}

 \medskip
\noindent\textbf{Sobolev estimates along $X$}. Using relations in \eqref{for:107} we get
\begin{align*}
x^2y^3\partial_x&=Y_1^3Y_2X-Y_1^4V.
\end{align*}
In \eqref{for:27}, by substituting $g$ with $Y_1^3Y_2Xg-Y_1^4Vg$,  we get
\begin{align}\label{for:48}
\norm{\partial_xg\cdot x^6y^3}_{C^0}&=\norm{(Y_1^3Y_2Xg-Y_1^4Vg)x^4}_{C^0}\notag\\
&\leq C\norm{Y_1^3Y_2Xg-Y_1^4Vg}_{G',4}\notag\\
&\leq C\norm{g}_{G',9}.
\end{align}
Since $g\in \mathcal{H}^\infty$, \eqref{for:108} of Lemma \ref{le:9}
\eqref{for:48} shows that for any $x\neq 0$ shows that $\int (Xg)(x,y)dy<\infty$ for almost all $x\in\RR$. Further, for almost all $x\in\RR$, formally we have
\begin{align}\label{for:30}
    \int (Xg)(x,y)dy&=\int \bigl(-\partial_xg\cdot x+\partial_yg\cdot y\bigl)dy\notag\\
    &\overset{\text{(1)}}{=}-\int \partial_xg\cdot xdy+\int \partial_yg\cdot ydy\notag\\
    &\overset{\text{(2)}}{=}-\int \partial_xg\cdot xdy-\int gdy\notag\\
    &=-\int \partial_xg\cdot xdy\overset{\text{(3)}}{=}-\bigl(\partial_x\int gdy\bigl)x=0.
 \end{align}
From \eqref{for:48}, we find that for any $x\in\RR\backslash 0$ $(\partial_xg)_x$ and $(\partial_xg\cdot y)_x$ are both in $L^2(\RR)$. In \eqref{for:59}, substituting $g$ with $\partial_xg$, we see that $\int (\partial_xg\cdot x)dy<\infty$ for any $x\in\RR\backslash 0$, which gives $(1)$. \eqref{for:28} shows that for any $x\neq 0$, $g\cdot y\rightarrow 0$ as $y\rightarrow 0$, which justifies $(2)$. Finally, to justify differentiation under the integral sign, we must prove that for any $x\neq 0$, $\int_{-\infty}^\infty \frac{\partial}{\partial x}g(x,y)dy$ is a uniformly convergent integral in a small neighborhood of $x$. From \eqref{for:48} we  see that $\partial_xg\cdot y^3$ is uniformly bounded on the set $\Omega_{a}=\{(x,y)\in\RR^2,\,\abs{x}> a\}$, $a>0$.  For any $r_1,\,r_2\geq r$ we have
 \begin{align*}
&\Bigl|\int_{r_1}^\infty \frac{\partial}{\partial x}g(x,y)dy\Bigl|+\Bigl|\int_{-\infty}^{-r_2} \frac{\partial}{\partial x}g(x,y)dy\Bigl|\\
&\leq \Bigl|\int_{r_1}^\infty \frac{\partial}{\partial x}g(x,y)y^3\cdot y^{-3}dy\Bigl|+\Bigl|\int_{-\infty}^{-r_2}\frac{\partial}{\partial x}g(x,y)y^3\cdot y^{-3}dy\Bigl|\\
&\leq 2\bigl\|\frac{\partial}{\partial x}g(x,y)y^3\bigl\|_{(C^0,\Omega_a)}\bigl(\int_{r}^\infty y^{-3}dy\bigl)\\
&=\frac{2}{3r^3a^6}\bigl\|\frac{\partial}{\partial x}g(x,y)y^2\bigl\|_{C^0},
\end{align*}
which gives $(3)$. Then we just showed that:

\noindent $(*)$ if $\int gdy=0$ for almost all $x\in\RR$ then $\int (Xg)dy=0$ for almost all $x\in\RR$.

Then inductively, we see that for any $n\in\NN$, $\int (X^ng)dy=0$ for almost all $x\in\RR$. Next, we will prove by induction that for any $n\in\NN$, $X^nf\in \mathcal{H}_t$ and is a solution of the equation
\begin{align}\label{for:32}
    V(X^nf)=Y_2q_{n}(X)g,
\end{align}
where $q_{n}$ is a polynomial of degree $n$.

$(*)$ and the conclusion in earlier part  show that the equation
\begin{align*}
Vh=Y_2(Xg+g)
\end{align*}
has a solution $h\in \mathcal{H}_t$ with estimates
\begin{align*}
    \norm{h}\leq 2\norm{Xg+g}_2\leq C\norm{g}_3.
 \end{align*}
Using the commutator relations
\begin{align*}
XY_2-Y_2X=-Y_2,\quad XV-VX=-2V, \quad\text{and}\quad VY_2=Y_2V,
\end{align*}
for any $\omega\in \mathcal{H}_t^\infty$ we have
\begin{align*}
    \langle h,V\omega\rangle&=-\langle Vh,\,\omega\rangle=-\langle Y_2(Xg+g),\,\omega\rangle=-\langle (XY_2+2Y_2)g,\,\omega\rangle\\
    &=-\langle Y_2g,\,(2-X)\omega\rangle=-\langle Vf,\,(2-X)\omega\rangle=\langle f,\,V(2-X)\omega\rangle\\
    &=-\langle f,\,XV\omega\rangle
    \end{align*}
Proposition \ref{cor:1} shows that there is no non-trivial $SL(2,\RR)$-invariant vectors in $\mathcal{H}_t$. Applying Lemma \ref{le:10} to the above relation we get
    $Xf=h$. Thus the estimate of $h$ gives $\norm{Xf}\leq C\norm{g}_{G',3}$. Then we proved the case when $n=1$. Suppose \eqref{for:32} holds when $n\leq k$.
Earlier arguments show that
\begin{align*}
\int \bigl(q_{k}(X)(Xg)+g\bigl)dy=0\qquad\text{ for almost all }x\in\RR.
\end{align*}
Along the proof line of the case of $n=1$, we can show that $X^{k+1}f$ is in $\mathcal{H}_t$ and satisfies the equation
\begin{align*}
    V(X^{k+1}f)=Y_2\bigl(q_{k}(X)(Xg)+g\bigl)=Y_2q_{k+1}(X)g
\end{align*}
with the estimate
\begin{align*}
    \norm{X^{k+1}f}\leq C\norm{q_{k}(X)(Xg)+g}_{2}\leq C_k\norm{g}_{G',k+3}.
 \end{align*}
Hence we proved the case when $n\leq k+1$ and thus obtained
\begin{align}\label{for:111}
    \norm{X^nf}\leq C_n\norm{g}_{G',n+2},\qquad \forall \,n\in\NN.
 \end{align}
 Then \eqref{for:34} follows directly from estimates in \eqref{for:58}, \eqref{for:46}, \eqref{for:31} and \eqref{for:111} and Theorem \ref{th:4}.
\end{proof}
\begin{remark}
 Without the condition ``$\int gdy=0$ for almost all $x\in\RR$" we can just as well define $f(x,y)=-\int_{0}^\infty g(x,t+y)dt$ and show that
\begin{align*}
 V(f\sqrt{-1})=Y_2g,\qquad\text{ on }\Omega\times\RR.
\end{align*}
Being able to write $f(x,y)=\int_{-\infty}^0 g(x,t+y)dt$, is what allows us to explore the $L^2(\RR^2)$ and $(\mathcal{H}_t)_{G'}^k$ properties of $f$.

We obtained Sobolev estimates long $V$, $Y_1$ and $Y_2$ by explicit calculation. To get smoothness of along $X$, we make use of the ``centralizer trick" that first appeared in Proposition \ref{le:6}, which will play the key role in next part to get smoothness along $U$.
\end{remark}
We are now in a position to proceed with the proof of Proposition \ref{po:2}.
\subsection{Proof of Theorem \ref{po:2}} \emph{\textbf{Proof of \eqref{for:61}}}. If $f$ is smooth, then immediately we see that $\mathcal{D}(g)=0$ for all $\mathcal{D}\in\mathcal{E}_{S}(\mathcal{H})$, where $S=\{\exp(tV)\}_{t\in\RR}$. By using Proposition \ref{le:6} we get the estimates
\begin{align}\label{for:20}
    \norm{Y_2^mf}_{SL(2,\RR),r}<C_{t,s}\norm{g}_s,\qquad \forall \,r<s-m-1.
\end{align}
Note that the constants $C_{t,s}$ are independent of the parameter $t$ since all $\rho_t\mid_{SL(2,\RR)}$ are outside a fixed neighborhood the trivial representation in the sense of Fell topology  by Remark \ref{re:2}.

\eqref{for:109} of Lemma \ref{le:9} shows that $\int gdy=0$ for almost all $x\in\RR$. Moreover, by arguments in the first part of the proof of Lemma \ref{le:8}, we find that
\begin{align*}
f(x,y)\cdot xy^n=&\left\{\begin{aligned} &\int_{0}^\infty g(x,t+y)\cdot y^ndt,&\qquad & y\geq 0\\
&-\int_{\infty}^0 g(x,t+y)\cdot y^ndt,&\qquad &y<0.
 \end{aligned}
 \right.
\end{align*}
for any $n\in\NN$. From \eqref{for:29} and \eqref{for:71} we see that
\begin{align*}
\norm{Y_1^nY_2f}=\norm{f(x,y)\cdot xy^n}\leq C\norm{g}_{n+2},\qquad \forall\,n\in\NN.
\end{align*}
From above relation and \eqref{for:20}, by using Theorem \ref{th:4} we see that  $Y_2f$ satisfies the estimates
\begin{align}\label{for:65}
    \norm{Y_2f}_s\leq C_{s}\norm{g}_{s+3},\qquad \forall\,s>0.
\end{align}
Using the commutator relation
\begin{align*}
VY_1^m=Y_1^mV+nY_1^{m-1}Y_2
\end{align*}
we have
\begin{align*}
   VY_1^mf= (Y_1^mV+mY_1^{m-1}Y_2)f=Y_1^mg+mY_1^{m-1}Y_2f,\qquad \forall\,m\in\NN,
\end{align*}
which implies that
\begin{align}\label{for:60}
\norm{Y_1^mf}&\overset{\text{(1)}}{\leq} C\norm{Y_1^mg+mY_1^{m-1}Y_2f}_2\notag\\
&\leq C\norm{g}_{m+2}+C_m\norm{f}_{m+2}\notag\\
&\overset{\text{(2)}}{=} C_m\norm{g}_{m+5}.
\end{align}
$(1)$ follows from Lemma \ref{le:9} and \ref{le:8} and $(2)$ holds because of \eqref{for:65}.

As an immediate consequence of \eqref{for:20}, \eqref{for:60} and Theorem \ref{th:4} we get
\begin{align*}
    \norm{f}_{s}\leq C_s\norm{g}_{s+6}\qquad \forall\,s\geq 0,
\end{align*}
which proves \eqref{for:61}.

\medskip
\emph{\textbf{Proof of \eqref{for:62}}}. By Lemma \ref{le:8}, there exists $f\in(\mathcal{H}_{t})_{G'}^\infty$ such that $Vf=Y_2g$.
For any $\omega\in \mathcal{H}_{t}^\infty$, by using the commutator relations $UV=X+VU$ and $VY_2=Y_2V$ we have
\begin{align}\label{for:41}
    &\langle Y_2^2(UY_2g-Xf),\omega\rangle\notag\\
    &=-\langle Y_2g,\,UY_2^2\omega\rangle+\langle f,\,XY_2^2\omega\rangle\notag\\
    &=-\langle Vf,\,UY_2^2\omega\rangle+\langle f,\,XY_2^2\omega\rangle\notag\\
    &=\langle f,\,(VU+X)Y_2^2\omega\rangle\notag\\
    &=\langle f,\,UVY_2^2\omega\rangle=\langle f,\,UY_2^2V\omega\rangle.
\end{align}
By relations in \eqref{for:107} we get
\begin{align}\label{for:45}
    UY_2^2&=-t\sqrt{-1}+VY_1^2-XY_2Y_1\qquad \text{and}\notag\\
    UY_2^2V&=-tV\sqrt{-1}+VY_1^2V-XY_2Y_1V.
\end{align}
Since $f\in(\mathcal{H}_{t})_{G'}^\infty$, using above expressions we have
\begin{align}\label{for:42}
    \langle f,\,UY_2^2V\omega\rangle=\langle V(t\sqrt{-1}+Y_1^2V-Y_1Y_2X)f,\,\omega\rangle.
\end{align}
 \eqref{for:41} and \eqref{for:42} imply that
 \begin{align}\label{for:44}
  V(t\sqrt{-1}+Y_1^2V-Y_1Y_2X)f=Y_2^2(UY_2g-Xf),
 \end{align}
which gives the relation
\begin{align*}
\int\bigl(UY_2g(x,y)-Xf(x,y)\bigl)dy=0\qquad\text{ for almost all }x\in\RR
\end{align*}
by using \eqref{for:109} Lemma \ref{le:9}. Then it follows from Lemma \ref{le:8} that the equation
\begin{align}\label{for:37}
Vh=Y_2(UY_2g-Xf).
\end{align}
has a solution $h\in (\mathcal{H}_t)^\infty_{G'}$. Comparing  \eqref{for:44} and \eqref{for:37} we find that
\begin{align*}
    f\cdot x^{-1}&=-t^{-1}(h+Y_1Xf-Y_1^2g).
    \end{align*}
Then we see that $f\cdot(x\sqrt{-1})^{-1}\in (\mathcal{H}_t)^\infty_{G'}$ and satisfies the equation
\begin{align*}
 V\bigl(f\cdot(x\sqrt{-1})^{-1}\bigl)=g,
\end{align*}
which  proves \eqref{for:62}.
\medskip

\emph{\textbf{Proof of \eqref{for:23}}}. From \eqref{for:62} we see that the cohomological equation $Vf=g$ has a solution $f\in(\mathcal{H}_t)^\infty_{G'}$. For any $\omega\in \mathcal{H}_t^\infty$, by using the commutator relations $VU=UV-X$ and $VY_2=Y_2V$  we have
\begin{align*}
    &\bigl\langle Y_2^2\bigl(Ug-Xf\bigl),\omega\bigl\rangle\\
    &=-\bigl\langle g,\,UY_2^2\omega\bigl\rangle+\bigl\langle f,\,XY_2^2\omega\bigl\rangle\\
    &=\langle f,\, (VU+X)Y_2^2\omega\rangle=\langle f,\,UY_2^2V\omega\rangle\notag\\
    &\overset{\text{(1)}}{=}\langle V(t\sqrt{-1}+Y_1^2V-Y_1Y_2X)f,\,\omega\rangle,
\end{align*}
where $(1)$ follows from \eqref{for:45}, which gives the equation
\begin{align*}
V(t\sqrt{-1}+Y_1^2V-Y_1Y_2X)f=Y_2^2\bigl(Ug-Xf\bigl).
\end{align*}
Then it follows from \eqref{for:109} Lemma \ref{le:9} that
\begin{align}\label{for:52}
    \int\bigl(Ug(x,y)-Xf(x,y)\bigl)dy=0,\qquad\text{ for almost all }x\in\RR,
\end{align}
which implies that the following equation
\begin{align*}
    Vh=Y_2\bigl(Ug-Xf\bigl)
\end{align*}
has a solution $h\in(\mathcal{H}_t)_{G'}^\infty$ by using Lemma \ref{le:8}.

For any $\omega\in \mathcal{H}_{t}^\infty$, by using commutator relations
\begin{align*}
 VU=UV-X,\quad UY_2-Y_2U=Y_1,\quad \text{ and }\quad VY_2=Y_2V
\end{align*}
we have
\begin{align*}
    -\langle h,V\omega\rangle&=\langle Y_2\bigl(Ug-Xf\bigl),\omega\rangle=\langle g,\,UY_2\omega\rangle-\langle f,\,XY_2\omega\rangle\\
    &=-\langle f,\,(VU+X)Y_2\omega\rangle=-\langle f,UY_2V\omega\rangle\\
    &=-\langle f,(Y_2U+Y_1)V\omega\rangle=\langle Y_2f,UV\omega\rangle+\langle Y_1f,V\omega\rangle.
    \end{align*}
Hence we have
\begin{align*}
    -\langle h+Y_1f,V\omega\rangle=\langle Y_2f,UV\omega\rangle.
    \end{align*}
This shows that
\begin{align*}
UY_2f=h+Y_1f
\end{align*}
by Remark \ref{re:2} and Lemma \ref{le:10}, which means that $UY_2f\in(\mathcal{H}_t)^\infty_{G'}$.

\medskip

\emph{\textbf{Proof of \eqref{for:38}}}. We take notations in \eqref{for:23}. Since $Xf$ satisfies the equation
\begin{align}\label{for:47}
    V(Xf)=Xg+2g,
\end{align}
\eqref{for:109} of Lemma \ref{le:9} and \eqref{for:23} show that  $UY_2Xf\in(\mathcal{H}_t)^\infty_{G'}$, which allows us to see that
the equation
\begin{align*}
    Vh_1=Ug-Xf
\end{align*}
has solution $h_1\in(\mathcal{H}_t)^\infty_{G'}$ by using \eqref{for:52}
and \eqref{for:62}.

Then for any $\omega\in \mathcal{H}_{t}^\infty$, by using the commutator relation $VU=UV-X$ we have
\begin{align*}
    -\langle h_1,V\omega\rangle&=\langle Ug-Xf,\,\omega\rangle=-\langle g,U\omega\rangle+\langle f,X\omega\rangle\\
    &=\langle f,\,(VU+X)\omega\rangle=\langle f,UV\omega\rangle.
    \end{align*}
This shows that $Uf=h_1$ by Lemma \ref{le:10} and Remark \ref{re:2}. Hence $Uf\in(\mathcal{H}_t)^\infty_{G'}$.

\medskip
\emph{\textbf{Proof of \eqref{for:63}}}. We will prove \eqref{for:63} inductively. Note that $Xf$ is the solution of \eqref{for:47},
\eqref{for:109} of Lemma \ref{le:9} and \eqref{for:38} show that $UXf\in(\mathcal{H}_t)^\infty_{G'}$.

For any $\omega\in \mathcal{H}^\infty$, by using the commutator relation
\begin{align*}
 VU^2=U^2V-2UX-2U\quad\text{ and }\quad VY_2=Y_2V
 \end{align*}
and \eqref{for:45} we have
\begin{align*}
    &\bigl\langle Y_2^2\bigl(U^{2}g-2UXf-2Uf\bigl),\,\omega\bigl\rangle\\
     &=\bigl\langle Y_2^2\bigl(U^{2}Vf-2UXf-2Uf\bigl),\,\omega\bigl\rangle\\
    &=-\langle f,U^{2}VY_2^2\omega\rangle=\langle Uf,\,UY_2^2V\omega\rangle\notag\\
    &=\langle V(t\sqrt{-1}+Y_1^2V-Y_1Y_2X)Uf,\,\omega\rangle.
\end{align*}
Then we get
\begin{align*}
   V(t\sqrt{-1}+Y_1^2V-Y_1Y_2X)Uf=Y_2^2\bigl(U^{2}g-2UXf-2Uf\bigl).
\end{align*}
Then by \eqref{for:109} of Lemma \ref{le:9} we have
\begin{align*}
    \int\bigl(U^{2}g(x,y)-2UXf(x,y)-2Uf(x,y)\bigl)dy=0,\quad\text{ for almost all }x\in\RR,
\end{align*}
which implies that the equation
\begin{align*}
    Vh_2=Y_2\bigl(U^{2}g-2UXf-2Uf\bigl).
\end{align*}
has a solution $h_2\in(\mathcal{H}_t)^\infty_{G'}$ by using Lemma \ref{le:8} again. For any $\omega\in \mathcal{H}_{t}^\infty$, by using the commutator relations
\begin{align*}
VU^2=U^2V-2UX-2U\quad\text{ and }\quad UY_2-Y_2U=Y_1
\end{align*}
we have
\begin{align*}
    -\langle h_2,V\omega\rangle&=\langle Y_2\bigl(U^{2}g-2UXf-2Uf\bigl),\omega\rangle\\
    &=\bigl\langle Y_2\bigl(U^{2}Vf-2UXf-2Uf\bigl),\omega\bigl\rangle\\
    &=\langle f,U^{2}VY_2\omega\rangle=-\langle Uf,UY_2V\omega\rangle\\
    &=-\langle Uf,(Y_2U+Y_1)V\omega\rangle\\
    &=\langle Y_2Uf,UV\omega\rangle+\langle Y_1Uf,V\omega\rangle.
    \end{align*}
Hence we have
\begin{align*}
    -\langle h_2+Y_1Uf,V\omega\rangle=\langle Y_2Uf,UV\omega\rangle
    \end{align*}
This shows that
\begin{align*}
UY_2Uf=h_2+Y_1Uf
\end{align*}
by Remark \ref{re:2} and Lemma \ref{le:10}. Hence $UY_2Uf\in (\mathcal{H}_t)_{G'}^\infty$.
Then we proved the case when $n=1$. Assume the result for $n=k$. Since $Xf$ is the solution of the equation \eqref{for:47}, by assumption $U^kXf$, $U^{k}f$, $UY_2U^kXf$ and $UY_2U^{k}f$ are all in $(\mathcal{H}_t)_{G'}^\infty$. Note that
\begin{align}\label{for:49}
    XU^k&=U^kX+e_kU^k, \qquad\qquad\text{ and }\notag\\
    U^{k+1}V&=VU^{k+1}+d_kU^kX+c_kU^{k},
    \end{align}
where $d_k,\,c_k$ are constants dependent on $k$. For any $\omega\in \mathcal{H}_{t}^\infty$, by using \eqref{for:45} and \eqref{for:49}  we have
\begin{align*}
    &\bigl\langle Y_2^2\bigl(U^{k+1}g-d_kU^kXf-c_kU^{k}f\bigl),\,\omega\bigl\rangle\\
     &=\bigl\langle Y_2^2\bigl(U^{k+1}Vf-d_kU^kXf-c_kU^{k}f\bigl),\,\omega\bigl\rangle\\
    &=(-1)^{k+2}\langle f,\,U^{k+1}VY_2^2\omega\rangle\notag\\
    &=\langle V(t\sqrt{-1}+Y_1^2V-Y_1Y_2X)U^kf,\,\omega\rangle.
\end{align*}
Hence we have
\begin{align*}
    V(t\sqrt{-1}+Y_1^2V-Y_1Y_2X)U^kf=Y_2^2\bigl(U^{k+1}Y_2g-d_kU^kXf-c_kU^{k}f\bigl).
\end{align*}
\eqref{for:109} of Lemma \ref{le:9} shows that
\begin{align*}
    \int\bigl(U^{k+1}g(x,y)-d_kU^kXf(x,y)-c_kU^{k}f(x,y)\bigl)dy=0
\end{align*}
for almost all $x\in\RR$, which implies that the equation
\begin{align}\label{for:50}
    Vh_3=U^{k+1}g-d_kU^kXf-c_kU^{k}f
\end{align}
has a solution $h_3\in (\mathcal{H}_t)_{G'}^\infty$ by using \eqref{for:62}. For any $\omega\in \mathcal{H}_{t}^\infty$, by using \eqref{for:49} we have
\begin{align*}
    -\langle h_3,V\omega\rangle&=\langle U^{k+1}g-d_kU^kXf-c_kU^{k}f,\omega\rangle\\
    &=\langle U^{k+1}Vf-d_kU^kXf-c_kU^{k}f,\omega\rangle\\
    &=(-1)^{k+2}\langle f,U^{k+1}V\omega\rangle\\
    &=\langle U^kf,UV\omega\rangle.
    \end{align*}
This shows that $U^{k+1}f=h_3$ by Remark \ref{re:2} and Lemma \ref{le:10}. Hence $U^{k+1}f\in (\mathcal{H}_t)_{G'}^\infty$. Since $Xf$ is the solution of the equation \eqref{for:47}, in the earlier argument, substituting $g$ and $f$ by $Xg+2g$ and $Xf$ respectively, we get $U^{k+1}Xf\in (\mathcal{H}_t)_{G'}^\infty$.

By \eqref{for:49} and commutator relations $VU=UV-X$ we have
\begin{align}\label{for:51}
    U^{k+2}V&=UVU^{k+1}+d_kU^{k+1}X+c_kU^{k+1}\notag\\
    &=(VU+X)U^{k+1}+d_kU^{k+1}X+c_kU^{k+1}\notag\\
    &=VU^{k+2}+d_kU^{k+1}X+c_kU^{k+1}+XU^{k+1}.
    \end{align}
For any $\omega\in \mathcal{H}^\infty$, by \eqref{for:45} and \eqref{for:51} we have
\begin{align*}
    &\bigl\langle Y_2^2\bigl(U^{k+2}g-d_kU^{k+1}Xf-c_kU^{k+1}f-XU^{k+1}f\bigl),\omega\bigl\rangle\\
     &=\bigl\langle Y_2^2\bigl(U^{k+2}Vf-d_kU^{k+1}Xf-c_kU^{k+1}f-XU^{k+1}f\bigl),\omega\bigl\rangle\\
    &=(-1)^{k+3}\langle f,\,U^{k+2}VY_2^2\omega\rangle=\langle U^{k+1}f,\,UY_2^2V\omega\rangle\notag\\
    &=\langle V(t\sqrt{-1}+Y_1^2V-Y_1Y_2X)U^{k+1}f,\,\omega\rangle.
\end{align*}
Then we get
\begin{align*}
 &V(t\sqrt{-1}+Y_1^2V-Y_1Y_2X)U^{k+1}f\\
 &=Y_2^2\bigl(U^{k+2}g-d_kU^{k+1}Xf-c_kU^{k+1}f-XU^{k+1}f\bigl).
\end{align*}
Then by Lemma \ref{le:9} we have
\begin{align*}
    \int\bigl(U^{k+2}g-d_kU^{k+1}Xf-c_kU^{k+1}f-XU^{k+1}f\bigl)dy=0.
\end{align*}
for almost all $x\in\RR$, which implies that the equation
\begin{align*}
    Vh_4=Y_2\bigl(U^{k+2}g-d_kU^{k+1}Xf-c_kU^{k+1}f-XU^{k+1}f\bigl)
\end{align*}
has a solution $h_4\in(\mathcal{H}_t)_{G'}^\infty$ by using Lemma \ref{le:8}. For any $\omega\in \mathcal{H}_{t}^\infty$, using the commutator relations  $UY_2-Y_2U=Y_1$ and $VY_2=Y_2V$ and \eqref{for:51} we have
\begin{align*}
    -\langle h_4,V\omega\rangle&=\bigl\langle Y_2\bigl(U^{k+2}g-d_kU^{k+1}Xf-c_kU^{k+1}f-XU^{k+1}f\bigl),\omega\bigl\rangle\\
    &=\bigl\langle Y_2\bigl(U^{k+2}Vf-d_kU^{k+1}Xf-c_kU^{k+1}f-XU^{k+1}f\bigl),\,\omega\bigl\rangle\\
    &=(-1)^{k+4}\langle f,\,U^{k+2}VY_2\omega\rangle=(-1)\langle U^{k+1}f,\,UY_2V\omega\rangle\\
    &=(-1)\langle U^{k+1}f,\,(Y_2U+Y_1)V\omega\rangle\\
    &=\langle Y_2U^{k+1}f,\,UV\omega\rangle+\langle Y_1U^{k+1}f,\,V\omega\rangle.
    \end{align*}
Hence we have
\begin{align*}
    -\langle h_4+Y_1U^{k+1}f,\,V\omega\rangle=\langle Y_2U^{k+1}f,\,UV\omega\rangle
    \end{align*}
This shows that
\begin{align*}
UY_2U^{k+1}f=h_4+Y_1U^{k+1}f
\end{align*}
by Remark \ref{re:2} and Lemma \ref{le:10}, which means $UY_2U^{k+1}f\in (\mathcal{H}_t)_{G'}^\infty$. Then we proved case when $n=k+1$. Hence we proved the claim completely.
\medskip

\emph{\textbf{Proof of \eqref{for:55}}}. From \eqref{for:62} and \eqref{for:63}, by using Theorem \ref{th:5} we get $f\in \mathcal{H}_t^\infty$; and the Sobolev estimates follow from \eqref{for:61} immediately.
\medskip

\emph{\textbf{Proof of \eqref{for:53}}}. When $t\neq 0$ it follows from \eqref{for:55} the equation $Vf'=g$ has a solution $f'\in (\mathcal{H}_t)^\infty$. Since $VY_2=Y_2V$, we see that $V(Y_2f')=Y_2g$, which shows that $f=V_2f'$ is a smooth solution to the equation $Vf=Y_2g$. The Sobolev estimates follow from \eqref{for:61} immediately.  When $t=0$, Lemma \ref{le:8} implies the equation $Vf=Y_2g$ has a solution $f\in (\mathcal{H}_t)^\infty_{G'}$.
  Note that $\rho_{0}\mid_{SL(2,\RR)}$ only contains principal series (see Remark \ref{re:6}). Lemma 4.7 in \cite{Forni} states  that for any unitary representation $(\varrho, \mathcal{L})$ of $SL(2,\RR)$ that only contains principal series, if the cohomological equation has a solution in $\mathcal{L}$, then the solution is a smooth vector in $\mathcal{L}$. Hence wee see that $f\in(\mathcal{H}_0)^\infty_{SL(2,\RR)}$. Since the Lie algebras of $G'$ and $SL(2,\RR)$ cover the Lie algebra of $SL(2,\RR)\ltimes\RR^2$, by using Theorem \ref{th:5}, we see that $f\in \mathcal{H}_{0}^\infty$.
 Also, the Sobolev estimates follow from \eqref{for:61} immediately.

\medskip

\emph{\textbf{Proof of \eqref{for:64}}}. For $t\neq 0$, \eqref{for:109} of Lemma \ref{le:9} and \eqref{for:55} implies the conclusion. For $t=0$, arguments in \eqref{for:53} show that $f\in(\mathcal{H}_0)^\infty_{SL(2,\RR)}$. \eqref{for:109} of Lemma \ref{le:9} and \eqref{for:53} show that the equation $Vh=Y_2g$ has a solution $h\in \mathcal{H}_{0}^\infty$. For any $\omega\in \mathcal{H}_0^\infty$, we have
\begin{align*}
    -\langle h,V\omega\rangle&=\langle Y_2g,\omega\rangle=-\langle Vf,Y_2\omega\rangle=\langle f,Y_2V\omega\rangle.
    \end{align*}
This shows that $Y_2f=h$ by Remark \ref{re:2} and Lemma \ref{le:10}. Then $Y_2f\in \mathcal{H}_0^\infty$. Note that $Xf$ is the solution of equation \eqref{for:47}. Substituting $g$ and $f$ with $Xg+2g$ and $Xf$ respectively, we see that $Y_2Xf\in \mathcal{H}_0^\infty$. Since $Uf$ is the solution of equation $VUf=Ug-Xf$, by \eqref{for:109} of Lemma \ref{le:9} we get
\begin{align*}
\int (Ug-Xf)dy=0\qquad\text{ for almost all }x\in\RR,
\end{align*}
which shows that the equation
\begin{align*}
 Vh_1=Y_2(Ug-Xf)
\end{align*}
has a solution
$h_1\in \mathcal{H}_0^\infty$ by using \eqref{for:53}. For any $\omega\in \mathcal{H}_0^\infty$, from the commutator relation $VU=UV-X$ we have
\begin{align*}
    -\langle h_1,V\omega\rangle&=\langle Y_2(Ug-Xf),\omega\rangle=-\langle (UVf-Xf),\,Y_2\omega\rangle\\
    &=-\langle f,UY_2V\omega\rangle=\langle Uf,Y_2V\omega\rangle.
    \end{align*}
This shows that $Y_2Uf=h_1$ by Remark \ref{re:2} and Lemma \ref{le:10}. Hence $Y_2Uf\in \mathcal{H}_0^\infty$.
Using the relation  $UY_2-Y_2U=Y_1$, for any $\omega\in \mathcal{H}_0^\infty$ we have
\begin{align*}
    -\langle f,Y_1\omega\rangle&=-\langle f,(UY_2-Y_2U)\omega\rangle=\langle (UY_2-Y_2U)f,\,\omega\rangle.
    \end{align*}
This shows that
\begin{align*}
Y_1f=(UY_2-Y_2U)f.
\end{align*}
Hence $Y_1f\in \mathcal{H}_0^\infty$. We already have showed that $Y_2f\in \mathcal{H}_0^\infty$ and $f\in(\mathcal{H}_0)^\infty_{SL(2,\RR)}$, which implies that $f\in \mathcal{H}_0^\infty$ by Theorem \ref{th:5}. Also, the Sobolev estimates follow from \eqref{for:61} immediately.
\subsection{Global coboundary for the unipotent flow in $G=SL(2,\RR)\ltimes\RR^2$}\label{sec:7}
Let $(\pi,\mathcal{H})$ be a unitary representation of $SL(2,\RR)\ltimes\RR^2$ without non-trivial $\RR^2$-invariant vectors. We now discuss how to obtain a global solution from the solution which exists in each irreducible component of $\mathcal{H}$. By general arguments in Section \ref{sec:3} there is a direct decomposition of
$\mathcal{H}=\int_Z \mathcal{H}_zd\mu(z)$ of irreducible unitary representations of $G$ for some measure space $(Z,\mu)$. If $\pi$ has no non-trivial $\RR^2$-invariant vectors, then for almost
all $z\in Z$, $\pi_z$ has non-trivial $\RR^2$-invariant vectors. Hence we can apply
Theorem \ref{po:2} to prove the following:
\begin{corollary}\label{cor:3}
Let $(\pi,\mathcal{H})$ be a unitary representation of $SL(2,\RR)\ltimes\RR^2$ without non-trivial $\RR^2$-invariant vectors. If $g\in \mathcal{H}^\infty$ and the cohomological equation $Vf=g$  has a solution $f\in \mathcal{H}$, then  $f\in \mathcal{H}^\infty$ and satisfies
\begin{align*}
  \norm{f}_{t}\leq C_{t}\norm{g}_{t+6},\qquad \forall\,t\geq 0.
\end{align*}
\end{corollary}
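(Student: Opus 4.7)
The plan is to reduce the global statement to the fiberwise statement from Theorem \ref{po:2}\eqref{for:64} via the direct integral machinery of Section \ref{sec:3}. First I would invoke Mackey theory (Lemma \ref{le:1}) together with the absence of non-trivial $\RR^2$-invariant vectors to write
\begin{align*}
\mathcal{H}=\int_Z\mathcal{H}_zd\mu(z),
\end{align*}
where for almost every $z\in Z$ the component $(\pi_z,\mathcal{H}_z)$ is isomorphic to some $(\rho_{t(z)},\mathcal{H}_{t(z)})$ of Lemma \ref{le:1}. All elements of the enveloping algebra are decomposable through this direct integral, and in particular $V$ acts fiberwise. Thus the equation $Vf=g$ descends to $Vf_z=g_z$ in $\mathcal{H}_z$ for almost every $z$, and $g\in\mathcal{H}^\infty$ implies $g_z\in\mathcal{H}_z^\infty$ for almost every $z$.

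Next I would apply Theorem \ref{po:2}\eqref{for:64} in every fiber to obtain $f_z\in\mathcal{H}_z^\infty$ with the Sobolev estimate $\norm{f_z}_s\leq C_s\norm{g_z}_{s+6}$ for all $s\geq 0$. The key point — and what makes the argument go through cleanly — is that the constant $C_s$ in Theorem \ref{po:2}\eqref{for:64} depends only on $s$ and not on the parameter $t(z)$ indexing the irreducible component. This uniformity was pointed out in the proof of \eqref{for:61}, where it follows from the observation (Remark \ref{re:2}) that the restrictions $\rho_{t}\mid_{SL(2,\RR)}$ all lie outside a fixed neighborhood of the trivial representation in the Fell topology, so Proposition \ref{le:6} applies with a uniform spectral gap.

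Once the uniform fiberwise estimate is in hand, I would integrate:
\begin{align*}
\norm{f}_s^2=\int_Z\norm{f_z}_s^2d\mu(z)\leq C_s^2\int_Z\norm{g_z}_{s+6}^2d\mu(z)=C_s^2\norm{g}_{s+6}^2,
\end{align*}
which simultaneously shows $f\in\mathcal{H}^s$ for every $s$ (hence $f\in\mathcal{H}^\infty$) and yields the asserted tame estimate.

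The main obstacle is not the fiberwise analysis (already done in Theorem \ref{po:2}) but the measurability/uniqueness issue: I must check that the solution $f_z$ produced fiberwise coincides (almost everywhere) with the disintegration of the given $f\in\mathcal{H}$, so that the Sobolev norms of $f$ are controlled by those of the $f_z$. This follows because on each $\mathcal{H}_z$ the representation has no non-trivial $V$-invariant vectors (by Howe--Moore applied to $\pi_z\mid_{SL(2,\RR)}$ via Proposition \ref{cor:1} and Proposition \ref{cor:2}), so the solution to $Vf_z=g_z$ is unique whenever it exists, and hence must equal the disintegration $(f)_z$ of the given global solution. With uniqueness and the uniform estimate secured, the integration step is automatic and the corollary follows.
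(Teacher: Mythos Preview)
Your proposal is correct and follows essentially the same route as the paper: disintegrate $\mathcal{H}$ into irreducibles, apply Theorem~\ref{po:2}\eqref{for:64} fiberwise, observe that the constant $C_s$ is uniform in the fiber parameter, and integrate. Your version is in fact more careful than the paper's terse proof, since you explicitly address the uniqueness issue guaranteeing that the fiberwise solutions $f_z$ coincide with the disintegration of the given global $f$.
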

\begin{proof}
The cohomological equation $Vf=g$ has a decomposition $Vf_z=g_z$ with $g_z\in\mathcal{H}_z^\infty$ for almost all $z\in Z$. \eqref{for:64} of Theorem \ref{po:2} shows that
\begin{align*}
\norm{f_z}_t\leq C_t\norm{g_z}_{t+6}, \qquad \forall\,t\geq 0\text{ and }\forall\,z\in Z
\end{align*}
Noting that $C_t$ are constants only dependent on $t$, we get
\begin{align*}
    \norm{f}_t^2=\int_Z\norm{f_z}^2_td\mu(z)\leq C_t^2\int_Z\norm{g_z}^2_{t+6}d\mu(z)=C_t^2\norm{g}^2_{t+6},
\end{align*}
which proves the claim.
\end{proof}
\begin{remark}\label{re:4}
We remark at this point that the condition $\int gdy=0$ is not sufficient to guarantee the existence of a solution $f\in \mathcal{H}_t$ of the cohomological
equation $Vf=g$ for any irreducible component $(\rho_t,\,\mathcal{H}_t)$. The problem only arises at $t=0$. If $g=h(x)h(y)$ where $h$ is a smooth function on $\RR$
with compact support and satisfies: $h=1$ on $[-1,1]$ and $\int h(x)dx=0$. Obviously, $g\in \mathcal{H}_0^\infty$. If $f\in \mathcal{H}_0$ is a solution to the cohomological equation $Vf=g$, then we have $-\partial_yf\cdot x=g$ which has the form
\begin{align*}
-\hat{f}_\xi\cdot x\xi\sqrt{-1}=h(x)\hat{h}(\xi)
\end{align*}
by taking Fourier transformation on fact $y$
as in \eqref{for:2}. Then $\int gdy=0$ implies that $\hat{h}(\xi)\cdot \xi^{-1}\in L^2(\RR)$, but $h(x)\cdot x^{-1}\notin L^2(\RR)$ by assumption. Then we have a contradiction. Noting that \eqref{for:53} of Theorem \ref{po:2} shows that the equation $Vp=Y_2g$ has a solution $p\in\mathcal{H}_0^\infty$,  the example also means that the  cocycle equation $Vp=Y_2g$ fails to have a common solution in $\mathcal{H}_0$.
\end{remark}

\subsection{Coboundary for the unipotent flow of $G=SL(n,\RR)$, $n\geq 3$}\label{sec:14} Before proceeding further with the proof of Theorem \ref{th:6}, we prove
certain technical results which are very useful for the discussion.
\begin{definition}\label{de:2}
For $m\geq 3$, let $G_m$ be the closed subgroup generated by $U_{1,2}$, $U_{2,1}$, $U_{1,m}$, and $U_{2,m}$ and let $H_m$ be the subgroup generated by $U_{1,2}$, $U_{2,1}$, $U_{m,1}$, and $U_{m,2}$.
\end{definition}
Then $G_m$ and $H_m$ are isomorphic to $SL(2,\RR)\ltimes\RR^2$. Let $\mathcal{A}=\{\mathfrak{u}_{1,i}, \,\mathfrak{u}_{j,1}: j\geq 3,\,i\geq 2\}$. Next, we will prove:
\begin{lemma}\label{le:11}
Let $\Pi$ be a  unitary representation of $G=SL(n,\RR)$, $n\geq3$ without non-trivial $G$-fixed vectors. If $\mathfrak{u}_{1,2}f=g$ where $g\in \mathcal{H}^\infty$ has a solution $f\in \mathcal{H}$, then for any $n\in\NN$ and $Y_i\in \mathcal{A}$, $1\leq i\leq n$, $Y_1\cdots Y_nf\in \mathcal{H}^\infty_{G_m}$ and $Y_1\cdots Y_nf\in \mathcal{H}^\infty_{H_m}$, $m\geq 3$.
\end{lemma}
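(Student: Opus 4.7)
The plan is to prove the lemma by induction on the length $n$, using Corollary \ref{cor:3} applied to the two $SL(2,\RR)\ltimes\RR^2$-subgroups $G_m$ and $H_m$.

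For the base case $n=0$, I would note that by Proposition \ref{cor:1} combined with Howe--Moore applied to the non-compact abelian subgroups $U_{1,m}\times U_{2,m}\subset G_m$ and $U_{m,1}\times U_{m,2}\subset H_m$, the restriction of $\Pi$ to each of these $\RR^2$-factors has no non-trivial invariant vector. The element $\mathfrak{u}_{1,2}$ plays the role of the upper-triangular $U$ in the enveloping $\mathfrak{sl}(2,\RR)$; since the Weyl element swapping rows $1$ and $2$ preserves both $G_m$ and $H_m$ and conjugates $U$ to $\pm V$, the conclusion of Corollary \ref{cor:3} transfers via this conjugation and applies to $\mathfrak{u}_{1,2}f=g$ on each subgroup, yielding $f\in \mathcal{H}^\infty_{G_m}\cap\mathcal{H}^\infty_{H_m}$.

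For the inductive step, set $W=Y_1\cdots Y_n f$, assume the conclusion for all words of length at most $n$, and fix $Z\in\mathcal{A}$. To prove $ZW\in\mathcal{H}^\infty_{G_m}\cap\mathcal{H}^\infty_{H_m}$ I would invoke Corollary \ref{cor:3} once more, applied to the identity
\[\mathfrak{u}_{1,2}(ZW)=Z(\mathfrak{u}_{1,2}W)+[\mathfrak{u}_{1,2},Z]W,\]
after verifying that the right-hand side is smooth along $G_m$ (resp.\ $H_m$). Commuting $\mathfrak{u}_{1,2}$ through the product and using the only relevant brackets $[\mathfrak{u}_{1,2},\mathfrak{u}_{1,i}]=0$ and $[\mathfrak{u}_{1,2},\mathfrak{u}_{j,1}]=-\mathfrak{u}_{j,2}$, one writes $\mathfrak{u}_{1,2}W$ as $Y_1\cdots Y_n g$ plus a sum of terms of the form $Y_1\cdots Y_{k-1}\mathfrak{u}_{j_k,2}Y_{k+1}\cdots Y_n f$. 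The first contribution is globally smooth, and each remaining term is rewritten using the identity $\mathfrak{u}_{j,2}=[\mathfrak{u}_{j,1},\mathfrak{u}_{1,2}]$: the $\mathfrak{u}_{1,2}$-factor is pushed through to the right, eventually absorbed into $\mathfrak{u}_{1,2}f=g$, or else producing a shorter word in $\mathcal{A}$ applied to $\mathfrak{u}_{j,1}f$, which is covered by the inductive hypothesis.

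The main obstacle is the combinatorial bookkeeping of these commutators, because the element $\mathfrak{u}_{j,2}$ produced by $[\mathfrak{u}_{1,2},\mathfrak{u}_{j,1}]$ does not belong to $\mathcal{A}$. The saving facts are that $\mathfrak{u}_{j,2}$ commutes with $\mathfrak{u}_{1,2}$ and with every $\mathfrak{u}_{l,1}$ ($l\geq 3$), while $[\mathfrak{u}_{j,2},\mathfrak{u}_{1,i}]=-\mathfrak{u}_{1,2}\delta_{i,j}$, so each extra commutator either vanishes or produces a factor of $\mathfrak{u}_{1,2}$ that converts $f$ into the smooth vector $g$. Once the right-hand side is thus exhibited as a sum of vectors smooth along $G_m$ and $H_m$, Corollary \ref{cor:3} (exactly as in the base case, restricted to each subgroup) delivers $ZW\in\mathcal{H}^\infty_{G_m}\cap\mathcal{H}^\infty_{H_m}$, completing the induction.
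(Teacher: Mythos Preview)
Your overall architecture (induction on the word length, with Corollary~\ref{cor:3} applied on each $G_m$, $H_m$ as the engine) is exactly the paper's, and your base case is fine. The gap is in how you dispose of the correction terms involving $\mathfrak{u}_{j,2}$.

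Consider already the first nontrivial case $n=0$, $Z=\mathfrak{u}_{j,1}$. You need $\mathfrak{u}_{1,2}(\mathfrak{u}_{j,1}f)=\mathfrak{u}_{j,1}g-\mathfrak{u}_{j,2}f\in\mathcal{H}^\infty_{G_m}$, hence $\mathfrak{u}_{j,2}f\in\mathcal{H}^\infty_{G_m}$. Your proposed rewriting $\mathfrak{u}_{j,2}=[\mathfrak{u}_{j,1},\mathfrak{u}_{1,2}]$ gives $\mathfrak{u}_{j,2}f=\mathfrak{u}_{j,1}g-\mathfrak{u}_{1,2}\mathfrak{u}_{j,1}f$, so you are reduced to showing $\mathfrak{u}_{1,2}\mathfrak{u}_{j,1}f\in\mathcal{H}^\infty_{G_m}$ --- but that is precisely the quantity you started with. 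The same circularity persists at every stage: writing $\mathfrak{u}_{j,2}$ as a commutator replaces a word of a given length by one that is at least as long (the ``shorter word applied to $\mathfrak{u}_{j,1}f$'' you describe is in fact a word of length $\ge n+1$ in $\mathcal{A}$ applied to $f$), so the induction never closes.

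The missing idea, which the paper uses, is to exploit that $\mathfrak{u}_{j,2}$ \emph{commutes} with $\mathfrak{u}_{1,2}$ rather than to expand it as a bracket. Then $\mathfrak{u}_{j,2}f$ is itself the solution of a cohomological equation $\mathfrak{u}_{1,2}(\mathfrak{u}_{j,2}f)=\mathfrak{u}_{j,2}g$ with \emph{globally} smooth right-hand side. Corollary~\ref{cor:3} applied to this auxiliary equation gives $\mathfrak{u}_{j,2}f\in\mathcal{H}^\infty_{G_m}\cap\mathcal{H}^\infty_{H_m}$ immediately, and in the inductive step the full hypothesis (which is a statement about \emph{any} pair $(f',g')$ with $\mathfrak{u}_{1,2}f'=g'\in\mathcal{H}^\infty$) applied to the pair $(\mathfrak{u}_{j,2}f,\mathfrak{u}_{j,2}g)$ yields that every word of length $\le k$ in $\mathcal{A}$ applied to $\mathfrak{u}_{j,2}f$ lies in $\mathcal{H}^\infty_{G_m}\cap\mathcal{H}^\infty_{H_m}$. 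With this in hand your commutator expansion (pushing $\mathfrak{u}_{j,2}$ to the right rather than rewriting it) terminates cleanly and the rest of your argument goes through.
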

\begin{proof}
We will prove the lemma inductively. Since $\pi\mid_{G_m}$ and $\pi\mid_{H_m}$, $m\geq 3$ have no $\RR^2$-invariant vectors by Howe-Moore,
Applying Corollary \ref{cor:3} to the equation $\mathfrak{u}_{1,2}f=g$ in each $\pi\mid_{G_m}$ and $\pi\mid_{H_m}$, $m\geq 3$ we find that $f$ is in all $\mathcal{H}^\infty_{G_m}$ and $\mathcal{H}^\infty_{H_m}$, $m\geq 3$.

Using the relations $\mathfrak{u}_{1,2}\mathfrak{u}_{1,i}=\mathfrak{u}_{1,i}\mathfrak{u}_{1,2}$, $i\geq 2$, and $\mathfrak{u}_{1,2}\mathfrak{u}_{j,2}=\mathfrak{u}_{j,2}\mathfrak{u}_{1,2}$, $j\geq 3$ we see that $\mathfrak{u}_{1,i}f$ and $\mathfrak{u}_{j,2}f$ satisfy the following equations
\begin{align}
\mathfrak{u}_{1,2}\mathfrak{u}_{1,i}f&=\mathfrak{u}_{1,i}g\qquad\text{ and }\label{for:68}\\
\mathfrak{u}_{1,2}\mathfrak{u}_{j,2}f&=\mathfrak{u}_{j,2}g\label{for:57}.
\end{align}
Applying Corollary \ref{cor:3} to the above two equations in each $\pi\mid_{G_m}$ and $\pi\mid_{H_m}$, $m\geq 3$, we also get that $\mathfrak{u}_{1,i}f$ and $\mathfrak{u}_{j,2}f$ are in all $\mathcal{H}^\infty_{G_m}$ and $\mathcal{H}^\infty_{H_m}$, $m\geq 3$.

In $\pi\mid_{H_j}$, by using the relation $\mathfrak{u}_{1,2}\mathfrak{u}_{j,1}=\mathfrak{u}_{j,1}\mathfrak{u}_{1,2}-\mathfrak{u}_{j,2}$, we see $\mathfrak{u}_{j,1}f$ satisfies the equation
\begin{align}\label{for:54}
\mathfrak{u}_{1,2}\mathfrak{u}_{j,1}f=\mathfrak{u}_{j,1}g-\mathfrak{u}_{j,2}f.
\end{align}
Note the right side of the above equation is in $\mathcal{H}^\infty_{G_m}$ and $\mathcal{H}^\infty_{H_m}$, $m\geq 3$ by earlier arguments. Then the same arguments as above show that $\mathfrak{u}_{j,1}f$, $j\geq 3$ are in all $\mathcal{H}^\infty_{G_m}$ and $\mathcal{H}^\infty_{H_m}$, $m\geq 3$. Then we proved the claim when  $n=1$.

Assume the result for $n=k$. Since $\mathfrak{u}_{j,2}f$ is the solution of  \eqref{for:57} for each $j\geq 3$, by assumption we see that
\begin{align}\label{for:70}
Y_1\cdots Y_k\mathfrak{u}_{j,2}f\quad\text{ and }\quad Y_1\cdots Y_kf
\end{align}
where $Y_i\in\mathcal{A}$, $1\leq i\leq k$ are in all $\mathcal{H}^\infty_{H_m}$ and $\mathcal{H}^\infty_{G_m}$, $m\geq 3$. For any $Y_i\in \mathcal{A}$, $1\leq i\leq k+1$, we have relations
\begin{align}\label{for:69}
    \mathfrak{u}_{1,2}Y_2\cdots Y_{k+1}&=Y_2\cdots Y_{k+1}\mathfrak{u}_{1,2}+\sum_i d_iY_2\cdots Y_{i-1}Y_{i+1}\cdots Y_{k+1}\mathfrak{u}_{j(i),2}\notag\\
    &+\sum_{i,j} c_{i,j}Y_2\cdots Y_{i-1}Y_{i+1}\cdots Y_{j-1}\mathfrak{u}_{1,2}Y_{j+1}\cdots Y_{k+1},
\end{align}
where $d_i$ and $c_{i,j}$ are constants and $j(i)\geq 3$ satisfies $0\neq [Y_i,\,\mathfrak{u}_{1,2}]=\mathfrak{u}_{j(i),2}$.
Let
\begin{align*}
    f'&=Y_2\cdots Y_{k+1}g+\sum_i d_iY_2\cdots Y_{i-1}Y_{i+1}\cdots Y_{k+1}\mathfrak{u}_{j(i),2}f\notag\\
    &+\sum_i c_jY_2\cdots Y_{i-1}Y_{i+1}\cdots Y_{j-1}\mathfrak{u}_{1,2}Y_{j+1}\cdots Y_{k+1}f.
\end{align*}
From \eqref{for:70}, we see that $f'$ and $Yf'$ are in all $\mathcal{H}^\infty_{G_m}$ and $\mathcal{H}^\infty_{H_m}$, $m\geq 3$ for any $Y\in\mathcal{A}$.
For any $\omega\in \mathcal{H}^\infty$, using $\mathfrak{u}_{1,2}f=g$ and \eqref{for:69} we have
\begin{align*}
    \langle f',\,\omega\rangle&=(-1)^{k+1}\langle f,\,Y_{k+1}\cdots Y_2 \mathfrak{u}_{1,2}\omega\rangle=-\langle Y_2\cdots Y_{k+1}f, \,\mathfrak{u}_{1,2}\omega\rangle
   \end{align*}
This shows that
\begin{align*}
  \mathfrak{u}_{1,2}Y_2\cdots Y_{k+1}f=f'.
\end{align*}
Since $Y_2\cdots Y_{k+1}f$ and $f'$ are in $\mathcal{H}^\infty_{G_m}$ and $\mathcal{H}^\infty_{H_m}$, $m\geq 3$, an argument similar to that in obtaining \eqref{for:68} and \eqref{for:57} shows that
\begin{align*}
\mathfrak{u}_{1,2}\mathfrak{u}_{1,i}Y_2\cdots Y_{k+1}f&=\mathfrak{u}_{1,i}f'\qquad \forall\,i\geq 2,\qquad \text{ and }\\
\mathfrak{u}_{1,2}\mathfrak{u}_{j,2}Y_2\cdots Y_{k+1}f&=\mathfrak{u}_{j,2}f'\qquad \forall\,j\geq 3.
\end{align*}
Since $\mathfrak{u}_{1,i}f'$ and $\mathfrak{u}_{j,2}f'$ are in all $\mathcal{H}^\infty_{G_m}$ and $\mathcal{H}^\infty_{H_m}$, $m\geq 3$, in exactly the same manner
as before we see that for $i\geq 2$ and $j\geq 3$
\begin{align*}
\mathfrak{u}_{1,i}Y_2\cdots Y_{k+1}f\quad\text{ and }\quad \mathfrak{u}_{j,2}Y_2\cdots Y_{k+1}f
\end{align*}
are in all $\mathcal{H}^\infty_{G_m}$ and $\mathcal{H}^\infty_{H_m}$, $m\geq 3$. Also, an analogous argument to that in obtaining \eqref{for:54} shows that
\begin{align*}
 \mathfrak{u}_{1,2}\mathfrak{u}_{j,1}Y_2\cdots Y_{k+1}f=\mathfrak{u}_{j,1}f'-\mathfrak{u}_{j,2}Y_2\cdots Y_{k+1}f.
\end{align*}
Since the right side the above equation is in all $\mathcal{H}^\infty_{G_m}$ and $\mathcal{H}^\infty_{H_m}$, $m\geq 3$, by exactly the same argument we see that $\mathfrak{u}_{j,1}Y_2\cdots Y_{k+1}f$ are in all $\mathcal{H}^\infty_{G_m}$ and $\mathcal{H}^\infty_{H_m}$, $m\geq 3$. Then we proved the case when $n=k+1$ and thus finish the proof.
\end{proof}
\begin{remark}The purpose of the above lemma is a preparation to prove that $f\in \mathcal{H}^\infty$.
If $\Gamma$ is cocompact and $\mathcal{H}=L_0^2(G/\Gamma)$, the space of square integrable functions on $G/\Gamma$ with zero average, then the fact that $f$ is in all $\mathcal{H}^\infty_{G_m}$ and $\mathcal{H}^\infty_{H_m}$, $m\geq 3$ implies that $f$ is a smooth function. Since the Lie algebras of $G_m$ and $H_m$, $m\geq 3$ generate the whole Lie algebra of $G$, the result is a direct consequence of subelliptic regularity theorem on compact manifolds (see Theorem \ref{th:5}).
\end{remark}
\subsection{Proof of Theorem \ref{th:6}} Note that the Weyl group operates simply transitive on the set of Weyl chambers, we may assume that $\mathfrak{u}_{i,j}=\mathfrak{u}_{1,2}$. We take notations in Lemma \ref{le:11} if there is no confusion.

\noindent \textbf{\emph{Proof of \eqref{for:33}}}
Since for $i\geq 3$ and $j\geq 2$,
\begin{align*}
u_{i,j}^n&=(u_{i,1}u_{1,j}-u_{1,j}u_{i,1})^n,\\
X_{i,i+1}^n&=\bigl(u_{i,i+1}u_{i+1,i}-u_{i+1,i}u_{i,i+1}\bigl)^n,
\end{align*}
where $X_{k,l}=\diag(0,\cdots,\underset{k}{1},\cdots,\underset{l}{-1},\cdots,0)\in\mathfrak{sl}(n,\RR)$, it follows from Lemma \ref{le:11} that $u_{i,j}^nf\in \mathcal{H}$ and $X_{i,i+1}^nf\in \mathcal{H}$  for each $n$.

Let $\mathcal{B}=\{\mathfrak{u}_{j,2}, \,\mathfrak{u}_{2,j}, j\geq 3\}$. Along the same lines as that of the proof of Lemma \ref{le:11}, we can show that for any $n\in\NN$ and $Y_i\in \mathcal{B}$ $1\leq i\leq n$, $Y_1\cdots Y_nf\in \mathcal{H}$.
Since
\begin{align*}
X_{2,3}^n&=\bigl(u_{2,3}u_{3,2}-u_{3,2}u_{2,3}\bigl)^n,
\end{align*}
it follows that  $X_{2,3}^nf\in \mathcal{H}$  for each $n$. Since the Lie algebras of $G_m$ and $H_m$, $m\geq 3$, directions $u_{i,j}$, $i\geq 3$, $j\geq 2$ and $X_{i,i+1}$, $i\geq 2$ cover $\mathfrak{sl}(n,\RR)$, by using Theorem \ref{th:5} we see that $f\in \mathcal{H}^\infty$, which implies that $\mathcal{D}(g)=0$ for all $\mathcal{D}\in\mathcal{E}_{U_{1,2}}(\mathcal{H})$. Then it follows from Proposition \ref{le:6} that for any $m\in\NN$, $i,\,j\geq 3$ and $1\leq k\neq l\leq n$
\begin{align*}
 \norm{U_{i,j}^mf}\leq C_{m}\norm{g}_{m+2}\quad \text{ and }\quad\norm{X_{k,l}^mf}\leq C_{m}\norm{g}_{m+2}.
\end{align*}
Moreover, Corollary \ref{cor:3} shows that
\begin{align*}
 \norm{f}_{G_m,t}&\leq C_{t}\norm{g}_{G_m,t+6}\quad \text{ and }\quad\norm{f}_{H_m,t}\leq C_{t}\norm{g}_{H_m,t+6},\quad \forall\,t>0.
\end{align*}
Then the Sobolev estimates of $f$ follow from the above estimates and Theorem \ref{th:5}.

\noindent \textbf{\emph{Proof of \eqref{for:35}}} To apply Proposition \ref{le:6}, it suffices to prove that $\pi\mid_{G_{\mathfrak{u}_{1,2}}}$ has a spectral gap.
By Howe-moore, $\pi\mid_{G_3}$ has no non-trivial $\RR^2$-invariant vectors. Then Remark \ref{re:2} shows that $\pi\mid_{G_{\mathfrak{u}_{1,2}}}$ is outside a fixed neighborhood of trivial representation in the Fell topology, which proves the claim. Then the result follows immediately from Proposition \ref{le:6}.
\subsection{Weak cocycle rigidity in $G=SL(2,\RR)\ltimes\RR^2$} Remark \ref{re:4} shows that generally, the cocycle equation $Vf=Y_2g$ fails to have common solutions. However, we have a weak version of cocycle rigidity:
\begin{lemma}\label{le:7}
Let $(\pi,\mathcal{H})$ be a unitary representation of $SL(2,\RR)\ltimes\RR^2$ without non-trivial $\RR^2$-invariant vectors. If $f,\,g\in \mathcal{H}^\infty$ and $g=Y_2p$ for some $p\in \mathcal{H}^\infty$, then  the cocycle equation $Vf=Y_2g$  has a common solution $h\in \mathcal{H}^\infty$ with estimates
\begin{align*}
    \norm{h}_s\leq C_s\max\{\norm{g}_{s+6}, \,\norm{f}_{s+6}\}.
\end{align*}
\begin{proof}
The discussion in Section \ref{sec:7} allows us to reduce our analysis of cocycle equations to each irreducible component $(\rho_t,\,\mathcal{H}_t)$ that appears in $\mathcal{H}$. If $t\neq 0$ for the cocycle equation $Vf_t=Y_2g_t$ in $\mathcal{H}_t$, \eqref{for:109} of Lemma \ref{le:9} shows that $\int (g_t(x,y)\cdot x)dy$ for almost all $x\in\RR$. Then immediately, we get $\int g_tdy=0$ for almost all $x\in\RR$, which shows that the equation $Vh_t=g_t$ has a solution $h_t\in \mathcal{H}^\infty$ by using \eqref{for:55} of Theorem \ref{po:2}. Lemma \ref{le:14} shows that $h_t$ is a common solution.

If $t=0$, the assumption means that
\begin{align*}
-\partial_yf_0\cdot x=Vf_0=Y_2g_0=Y_2Y_2p_0=-p_0\cdot x^2,
\end{align*}
which implies that $\int p_tdy=0$ for almost all $x\in\RR$ by \eqref{for:109} of Lemma \ref{le:9}. Then the equation $Vh_0=Y_2p_0=g$ has a solution $h_0\in \mathcal{H}^\infty$ by using \eqref{for:53} of Theorem \ref{po:2}. Again, Lemma \ref{le:14} show that $h_0$ is a common solution.

Hence we showed the existence of the common solution $h_t$ in  all $\mathcal{H}_t$ that appear in $\mathcal{H}$. Further, \eqref{for:64} of Theorem \ref{po:2} gives the Sobolev estimates
\begin{align*}
    \norm{h_t}_s\leq C_s\max\{\norm{g_t}_{s+6}, \,\norm{f_t}_{s+6}\}
\end{align*}
for any such $h_t$. Since these constants $C_s$ are independent of $t$, we get a global common solution $h\in \mathcal{H}^\infty$ with Sobolev estimates
\begin{align*}
    \norm{h}_s\leq C_s\max\{\norm{g}_{s+6}, \,\norm{f}_{s+6}\},
\end{align*}
which proves the claim.
\end{proof}
\end{lemma}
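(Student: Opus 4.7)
My plan is to reduce the global problem to each irreducible component appearing in the direct integral decomposition of $\mathcal{H}$, exactly as set up in Section \ref{sec:7}. Since $\pi$ has no non-trivial $\RR^2$-invariant vectors, almost every fiber $(\rho_t,\mathcal{H}_t)$ is one of the representations classified in Lemma \ref{le:1}. I decompose $f=\int f_t\,d\mu(t)$, $g=\int g_t\,d\mu(t)$, $p=\int p_t\,d\mu(t)$, and the cocycle equation decomposes componentwise to $Vf_t=Y_2g_t$, with $g_t=Y_2p_t$. I will produce a common solution $h_t\in\mathcal{H}_t^\infty$ on each fiber, with Sobolev estimates whose constants depend only on $s$, and then reassemble.

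The construction on a fiber splits naturally into two cases. For $t\neq 0$, I want to apply \eqref{for:55} of Theorem \ref{po:2} to solve $Vh_t=g_t$, for which I must verify $\int g_t(x,y)\,dy=0$ for almost every $x$. This is where the cocycle equation pays off: since $Vf_t=Y_2g_t$ admits the solution $f_t\in\mathcal{H}_t$, \eqref{for:109} of Lemma \ref{le:9} forces $\int (Y_2g_t)(x,y)\,dy=0$ a.e.; but $Y_2$ acts as multiplication by $x\sqrt{-1}$, so this reads $x\int g_t(x,y)\,dy=0$, and hence $\int g_t(x,y)\,dy=0$ for almost every $x$. Theorem \ref{po:2}\eqref{for:55} then produces $h_t\in\mathcal{H}_t^\infty$ with $Vh_t=g_t$ and the tame estimate $\norm{h_t}_s\le C_s\norm{g_t}_{s+6}$. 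For $t=0$, the equation $Vf_0=Y_2g_0=Y_2^2p_0$ combined with \eqref{for:109} yields $\int(Y_2p_0)(x,y)\,dy=0$ a.e., hence $\int p_0(x,y)\,dy=0$; now Theorem \ref{po:2}\eqref{for:53} directly furnishes a smooth solution $h_0\in\mathcal{H}_0^\infty$ to $Vh_0=Y_2p_0=g_0$ with $\norm{h_0}_s\le C_s\norm{p_0}_{s+7}$ (and the $Y_2p_0$ repackaging can be absorbed into the estimate on $g$).

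Once $h_t$ solves $Vh_t=g_t$ on each fiber, I invoke Lemma \ref{le:14} to upgrade it to a common solution: in the representation $\rho_t$ the generator $Y_2$ acts as multiplication by $x\sqrt{-1}$, so there are no non-zero $Y_2$-invariant vectors, and the lemma yields $Y_2h_t=f_t$ automatically. The Sobolev estimate $\norm{h_t}_s\le C_s\max\{\norm{g_t}_{s+6},\norm{f_t}_{s+6}\}$ then follows from \eqref{for:64} of Theorem \ref{po:2} applied to $h_t$ (the bound in terms of $f_t$ comes via $Y_2h_t=f_t$ handled in the $G'$-regularity already present in the proof of Theorem \ref{po:2}), with constants independent of $t$. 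Squaring and integrating over the measure $\mu$ as in Corollary \ref{cor:3} assembles the fibers into a global $h\in\mathcal{H}^\infty$ satisfying both $Vh=g$ and $Y_2h=f$, with the claimed tame estimate.

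The main obstacle is the case $t=0$: there the naive approach of solving $Vh_0=g_0$ need not work, as Remark \ref{re:4} exhibits a smooth $g_0$ with $\int g_0\,dy=0$ but no $L^2$ solution. This is precisely why the hypothesis $g=Y_2p$ is essential—it pushes the obstruction from the level of $g$ to the level of $p$, where \eqref{for:109} can eliminate it. Beyond this, the only subtlety is checking that all constants produced by Theorem \ref{po:2} are uniform in the parameter $t$ (which is the content of Remark \ref{re:2} guaranteeing a uniform spectral gap for $\rho_t\mid_{SL(2,\RR)}$) so the reassembly step goes through verbatim as in Corollary \ref{cor:3}.
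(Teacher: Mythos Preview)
Your proposal is correct and follows essentially the same approach as the paper's proof: reduce to irreducible fibers, use \eqref{for:109} of Lemma \ref{le:9} to extract the vanishing-integral condition (for $g_t$ when $t\neq 0$, for $p_0$ when $t=0$), then invoke \eqref{for:55} respectively \eqref{for:53} of Theorem \ref{po:2} to solve $Vh_t=g_t$, upgrade via Lemma \ref{le:14}, and reassemble with the uniform estimate \eqref{for:64}. The only slip is a harmless typo in the $t=0$ case (you write $\int(Y_2p_0)\,dy=0$ where the direct output of \eqref{for:109} is $\int(Y_2^2p_0)\,dy=0$), but your conclusion $\int p_0\,dy=0$ is correct.
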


\section{Cocycle rigidity for $(SL(2,\RR)\ltimes \RR^2)\ltimes\RR^3$}\label{sec:9}
In this part we assume notation in Section \ref{sec:10}. Recall that $L$ denotes the rank $2$ subgroup $\begin{pmatrix}v_1 \\
v_2\\
0\\
 \end{pmatrix}$ in $\RR^3$. Suppose the group $(SL(2,\RR)\ltimes \RR^2)\ltimes\RR^3$ is as described in Section \ref{sec:10}. The purpose of  this section is to prove the following:
\begin{proposition}\label{po:1}
For any unitary representation $(\Pi,\mathcal{H})$ of $(SL(2,\RR)\ltimes \RR^2)\ltimes\RR^3$ without non-trivial $L$-invariant vectors, if $f,\,g\in\mathcal{H}^\infty$,
\begin{enumerate}
  \item \label{for:5}the cocycle equation $U_1f=Y_2g$ has a common solution $p\in\mathcal{H}^\infty$ satisfying
\begin{align*}
 \norm{p}_s\leq C_s\max\{\norm{g}_{s+7}, \,\norm{f}_{s+7}\},
\end{align*}

\medskip
  \item \label{for:18}if $g=Y_1h$ for some $h\in \mathcal{H}^\infty$, the cocycle equation $U_1f=U_2g$ has a common solution $p\in\mathcal{H}$ satisfying
\begin{align*}
 \norm{p}_s\leq C_s\max\{\norm{g}_{s+7}, \,\norm{f}_{s+7}\}.
\end{align*}
\end{enumerate}

\end{proposition}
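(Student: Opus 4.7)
The plan is to reduce to irreducible representations $\Pi_{(t,r)}$ via the direct integral decomposition of Section \ref{sec:3}. Since $(\Pi,\mathcal{H})$ has no non-trivial $L$-fixed vectors, Mackey theory (Lemma \ref{le:2}) guarantees that every irreducible component is of the form $\Pi_{(t,r)}$ on $L^2(\RR^3)$ with the explicit operator realization \eqref{for:7}. Moreover, the spectral gap on $SL(2,\RR)$ within each $\Pi_{(t,r)}$ is uniform in $(t,r)$ by Proposition \ref{cor:1} combined with Howe--Moore applied to the Jacobson--Morosov triple containing $U_1$, so the Sobolev constants obtained componentwise will be independent of $(t,r)$ and the global estimate will follow by reassembly as in Corollary \ref{cor:3}.

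For part (1), in $\Pi_{(t,r)}$ the equation $U_1 f = Y_2 g$ reads $x\partial_y f = y\sqrt{-1}\,g$. The construction of the common solution $p$ will proceed by integration along the $y$-axis, mirroring the template of Lemma \ref{le:8}: set $p(x,y,z)$ to be (up to signs) an integral of the form $\int_0^{\pm\infty} g(x,t+y,z)\,dt$ divided by an appropriate $x$-factor. The cocycle hypothesis $U_1 f = Y_2 g$ plays the role of a vanishing-mean condition in the sense of \eqref{for:109} of Lemma \ref{le:9}, which is what allows the integrated candidate $p$ to land in $L^2(\RR^3)$. Sobolev bounds along the subalgebra $G' = \{X, V_1, Y_1, Y_2\}$ (the analog of the $G'$ from Section \ref{sec:16}) then follow from differentiation under the integral sign as in Lemma \ref{le:8}, and smoothness along the remaining directions $U_2$, $U_3$, $Y_3$ is obtained by the centralizer trick of Proposition \ref{le:6} together with the subelliptic regularity Theorem \ref{th:5}.

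For part (2), the equation $U_1 f = U_2 g$ in $\Pi_{(t,r)}$ reduces to $\partial_y f = \partial_z g$ after dividing by $-x$. The hypothesis $g = Y_1 h = x\sqrt{-1}\,h$ with $h\in \mathcal{H}^\infty$ means that $g/x = \sqrt{-1}\,h$ is a smooth $L^2$ function, so the singular $1/x$-factor in the cohomological equation is effectively absent. I would construct $p$ by solving $U_1 p = g$ via integration in $y$ (feasible precisely because of the $Y_1$-hypothesis, in the spirit of \eqref{for:110} of Lemma \ref{le:9}); the commutation $[U_1, U_2] = 0$, the cocycle equation, and Lemma \ref{le:14} (noting Howe--Moore rules out non-trivial $U_2$-invariant vectors in $\mathcal{H}$) will then force $U_2 p = f$ automatically. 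This is the analog in $(SL(2,\RR)\ltimes\RR^2)\ltimes\RR^3$ of Lemma \ref{le:7}'s weak cocycle rigidity, with $Y_1 \in E_{1,2}\cap E_{1,3}$ playing exactly the role prescribed by \eqref{for:26} of Theorem \ref{th:8}.

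The main obstacle I expect is uniformity of the Sobolev estimates across $(t,r)$, since $V_1 = -y\partial_x + \sqrt{-1}(r+tz)x^{-2}$ has a coefficient growing in $z$ and in the spectral parameters, and the non-trivial commutator $[U_1, Y_2] = Y_1$ in part (1) generates correction terms that must be absorbed by an induction on the differential order analogous to the proof of \eqref{for:63} in Theorem \ref{po:2}. The loss of $7$ Sobolev orders in the final estimate then arises from the $6$-order loss obtained in Corollary \ref{cor:3} for the underlying $SL(2,\RR)\ltimes\RR^2$ cohomological equation, together with one extra derivative coming from the $Y_2$ (respectively $Y_1$) factor on the right-hand side of the cocycle equation.
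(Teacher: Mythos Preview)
Your approach to part (1) rests on a misprint in the statement: the equation is $U_1 f = Y_3 g$, as the paper's proof (Lemmas \ref{le:4}--\ref{le:5}) and the application in Theorem \ref{th:8}\eqref{for:79} (where $\mathfrak{u}_{3,4}\leftrightarrow Y_3$) make clear; with $Y_2$ one has $[U_1,Y_2]=Y_1\neq 0$, so there is no abelian cocycle to speak of. More importantly, the paper does not use the Lemma \ref{le:8} integration template here at all. The key structural fact is that $Y_3$ is \emph{central} in the subgroup $\mathbb{S}=SL(2,\RR)\times\RR$ of Section \ref{sec:8}: decomposing $\Pi_{(t,r)}|_{\mathbb{S}}=\int_\RR \tau_{(t,r,z)}\,dz$ (Lemma \ref{le:3}), $Y_3$ becomes the scalar $-z\sqrt{-1}$ in each fiber, and for $z\neq 0$ one simply sets $p=f\cdot z^{-1}\sqrt{-1}$, with $\|p\|\leq C\|g\|_2$ coming from Flaminio--Forni applied to $U_1 p=g$ in the tempered fiber. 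Your integration-in-$y$ scheme, by contrast, would solve $-x\partial_y p=g$ and run into a $1/x$ singularity that nothing in the hypotheses resolves. Global Sobolev regularity is then handled not by uniformity in $(t,r)$ but by applying Corollary \ref{cor:3} directly to the two $SL(2,\RR)\ltimes\RR^2$ subgroups $\mathbb{S}_1,\mathbb{S}_2$, together with $\|Y_3^n p\|=\|Y_3^{n-1}f\|$.

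For part (2) your overall strategy matches the paper's (solve $U_1 p=g$, then get $U_2 p=f$ via the cocycle identity and Howe--Moore), but the source of the vanishing-mean condition is misidentified. The hypothesis $g=Y_1 h$ alone gives only $g/x\in L^2$, not $\int g\,dy=0$. The paper first Fourier-transforms in $z$, turning the cocycle equation into $-x\partial_y\hat f=-x^2\xi\,\hat h$; now Lemma \ref{le:9}\eqref{for:109} in the $(x,y)$-variables yields $\int\hat h\,dy=0$ for a.e.\ $(x,\xi)$, after which one integrates $\hat h$ in $y$ and inverts the transform to obtain $p$. Without this Fourier step the $y$-integration is not justified.
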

Next, we will prove some  technical results whose roles will be clear from the subsequent development.
\subsection{Unitary representations of $\mathbb{S}=SL(2,\RR)\times \RR$}\label{sec:8} Let $\mathbb{S}$ denote the following subgroup of $(SL(2,\RR)\ltimes \RR^2)\ltimes\RR^3$: $\begin{pmatrix}[ccc|c]
  a & b & 0& 0\\
c & d & 0& 0\\
0 & 0 & 1& v\\
 \end{pmatrix}$, where $\begin{pmatrix}
  a & b\\
c &  d\\
 \end{pmatrix}\in SL(2,\RR)$ and  $v\in\RR$. For any $z,\,t,\,r\in \RR$ we define a unitary representation of $\mathbb{S}$ as follows:
for each $z\in\RR$ the group action $\mathbb{S}$ is defined by:
\begin{gather*}
\tau_{(t,r,z)}: SL(2,\RR)\times\RR\rightarrow \mathcal{B}(\mathbb{V}_{(t,r,z)})\\
\tau_{(t,r,z)}(s)f(x,y)=e^{(p_1r+p_2t-zv)\sqrt{-1}}f(-cy+xa,yd-bx);
 \end{gather*}
 and
\begin{align*}
 \norm{f}_{\mathbb{V}_{(t,r,z)}}=\norm{f}_{L^2(\RR^2)},
\end{align*}
where $s=\begin{pmatrix}[ccc|c]
  a & b & 0& 0\\
c & d & 0& 0\\
0 & 0 & 1& v\\
 \end{pmatrix}$, $p_1=cx^{-1}(xa-cy)^{-1}$ and $p_2=zp_1$.

 The vector fields for $\tau_{(t,r,z)}$ on $\mathbb{V}_{(t,r,z)}$ are
 \begin{gather*}
 X=x\partial_x-y\partial_y,\qquad U_1=-x\partial_y,\qquad Y_3=-z \sqrt{-1},\notag\\
V_1=-y\partial_x+(r+tz)x^{-2}\sqrt{-1}.
  \end{gather*}
Compared with Lemma \ref{le:2}, we see  that for any $z,\,t,\,r\in \RR$,  $\tau_{(t,r,z)}$ are unitary representations of $\mathbb{S}$.
\subsection{Decomposition of $\Pi_{(t,r)}$ into direct integrals of representations on $\mathbb{S}$}\label{sec:18}
For any $t,\,r\in\RR$, set
\begin{align*}
\mathbb{V}_{(t,r)}=\int \mathbb{V}_{(t,r,z)}dz.
\end{align*}
From the discussion in Section \ref{sec:5}, we see that $\mathbb{H}_{(t,r)}=L^3(\RR^3,\CC)$ and $\mathbb{V}_{(t,r)}=L^2(\RR,L^2(\RR^2,\CC))$. Define a map
\begin{gather*}
L^3(\RR^3,\CC)\xrightarrow{\mathcal{F}} L^2(\RR,L^2(\RR^2,\CC)):\,\,\mathcal{F}(h)(z)(x,y)=h(x,y,z)
\end{gather*}
for any $h\in L^3(\RR^3,\CC)$. It is clear that $\mathcal{F}$ is injective and unitary. Then $\mathbb{H}_{(t,r)}$  can be unitarily imbedded in to $\mathbb{V}_{(t,r)}$. Conversely, for any $k\in L^2(\RR,L^2(\RR^2,\CC))$, the map
\begin{align*}
    h\rightarrow\int_{\RR}\bigl\langle k(z), \mathcal{F}(h)(z)\bigl\rangle_{L^2(\RR^2)} dz
\end{align*}
defines a bounded linear function on $L^3(\RR^3,\CC)$. Then by Riesz representation theorem, there exists a unique element $\bar{\mathcal{F}}(k)$ in $L^3(\RR^3,\CC)$ such that
\begin{align*}
    \bigl\langle \bar{\mathcal{F}}(k),\, h\bigl\rangle_{L^3(\RR^3,\CC)}=\int_{\RR}\bigl\langle k(z),\, \mathcal{F}(h)(z)\bigl\rangle_{L^2(\RR^2)} dz,
\end{align*}
for any $h\in L^3(\RR^3,\CC)$. It is easy to check that $\bar{\mathcal{F}}$ is unitary and surjective. Since $\bar{\mathcal{F}}\circ \mathcal{F}(h)=h$
for any $h\in L^3(\RR^3,\CC)$, $\mathcal{F}$ is a unitary isomorphism. Moreover, by definition, it is clear that
\begin{align*}
\mathcal{F}(\Pi_{(t,r)}(s)h)&=\int_{\RR}\tau_{(t,r,z)}(s)\bigl((\mathcal{F}h)(z)\bigl)dz
\end{align*}
for any $(t,r)\in\RR^2$, where $s\in \mathbb{S}$ and $h\in \mathbb{H}_{(t,r)}$. Then we have:
\begin{lemma}\label{le:3}
The map $L^3(\RR^3,\CC)\xrightarrow{\mathcal{F}} L^2(\RR,L^2(\RR^2,\CC))$:
\begin{align*}
    h(x,y,z)\rightarrow \int_\RR \mathcal{F}(h)(z)dz\quad\text{ where }\quad \mathcal{F}(h)(z)(x,y)=h(x,y,z)
    \end{align*}
establishes unitary equivalences between unitary representations  $(\Pi_{(t,r)},\mathbb{H}_{(t,r)})$ and $(\int_{\RR}\tau_{(t,r,z)}dz,\mathbb{V}_{(t,r)})$ over $\mathbb{S}$.
\end{lemma}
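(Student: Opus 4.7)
The plan is to verify Lemma \ref{le:3} in two steps: first establish that $\mathcal{F}$ is a unitary isomorphism of Hilbert spaces, and then check the intertwining relation on a set of generators of $\mathbb{S}$.

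For unitarity, the cleanest route is Fubini's theorem: for any measurable $h$ on $\RR^3$,
\begin{align*}
\norm{\mathcal{F}(h)}^2_{L^2(\RR, L^2(\RR^2))} = \int_\RR \int_{\RR^2} \abs{h(x,y,z)}^2\, dx\, dy\, dz = \norm{h}^2_{L^2(\RR^3)},
\end{align*}
so $\mathcal{F}$ is an isometry from $\mathbb{H}_{(t,r)}$ into $\mathbb{V}_{(t,r)}$. Surjectivity can be read off from the Riesz-representation argument already outlined in the excerpt, or more directly from the observation that every simple tensor $\varphi(z)\psi(x,y)$ with $\varphi\in L^2(\RR)$ and $\psi\in L^2(\RR^2,\CC)$ lies in the image of $\mathcal{F}$ and such tensors span a dense subspace of $L^2(\RR, L^2(\RR^2,\CC))$. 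Thus $\mathcal{F}$ is a unitary isomorphism.

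For the intertwining property, it suffices to check it on a generating set of $\mathbb{S}$, namely the $SL(2,\RR)$ block and the one-parameter subgroup in the third coordinate of $\RR^3$. For $s$ whose $\RR$-part is trivial and whose only nontrivial piece is the $SL(2,\RR)$ block, substitute $u_1=u_2=0$ in the formulas \eqref{for:13} and \eqref{for:3}: this forces $FD = z(ad-bc) = z$, so the action preserves the $z$-fiber, and $p_2 = zp_1$. Hence
\begin{align*}
(\Pi_{(t,r)}(s)h)(x,y,z) = e^{(p_1 r + zp_1 t)\sqrt{-1}} h(D,E,z),
\end{align*}
whose $z$-slice matches $\tau_{(t,r,z)}(s)(\mathcal{F}(h)(z))(x,y)$ by the definition of $\tau_{(t,r,z)}$ in Section \ref{sec:8}. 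For $s$ in the one-parameter subgroup generated by $Y_3$, both actions reduce to pointwise multiplication by $e^{-zv\sqrt{-1}}$ on each $z$-fiber, so intertwining is immediate. Multiplicativity extends the relation to all of $\mathbb{S}$.

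The main (and rather minor) obstacle is the bookkeeping of the formulas \eqref{for:13} and \eqref{for:3} under the specialization $u_1=u_2=0$: once these simplify, both the preservation of the $z$-fiber and the matching of the phase factors fall out by direct inspection, and no analysis beyond Fubini is required.
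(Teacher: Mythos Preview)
Your proof is correct and follows essentially the same approach as the paper. The paper's argument for unitarity and surjectivity is the Riesz-representation route you cite (your simple-tensor alternative is an equally standard variant), and for the intertwining relation the paper simply asserts ``by definition, it is clear that $\mathcal{F}(\Pi_{(t,r)}(s)h)=\int_{\RR}\tau_{(t,r,z)}(s)\bigl((\mathcal{F}h)(z)\bigl)dz$''; your explicit specialization $u_1=u_2=0$ in \eqref{for:13} and \eqref{for:3} (yielding $FD=z$ and $p_2=zp_1$) is precisely the computation that unpacks that assertion.
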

Hence we can write
\begin{align}\label{for:12}
  \mathbb{H}_{(t,r)}&=\int_\RR\mathbb{V}_{(t,r,z)}dz\quad\text{ and }\quad\Pi_{(t,r)}=\int_\RR\tau_{(t,r,z)}dz
  \end{align}
over $\mathbb{S}$.

\subsection{Cocycle rigidity in $(\tau_{(t,r,z)},\mathbb{V}_{(t,r,z)})$ }
\begin{lemma}\label{le:4}For any $t,\,r\in\RR$, there exists a full measure set $\Omega\subset\RR$ such that: for any $z\in\Omega$ and any $f,\,g\in \mathbb{V}_{(t,r,z)}^\infty$, the cocycle equation $U_1f=Y_3g$ has a solution $p\in \mathbb{V}_{(t,r,z)}$ satisfying $\norm{p}\leq C\norm{g}_{2}$.
\end{lemma}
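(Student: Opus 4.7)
The plan is as follows. Since $Y_3$ acts on $\mathbb{V}_{(t,r,z)}$ as multiplication by the scalar $-z\sqrt{-1}$, it is invertible exactly when $z\neq 0$. I take $\Omega\subset\RR\setminus\{0\}$ (refined further below). For $z\in\Omega$ the natural candidate for a common solution is
\[
p := \frac{\sqrt{-1}}{z}\,f,
\]
which satisfies $Y_3 p = f$ tautologically, while the cocycle equation $U_1 f = Y_3 g = -z\sqrt{-1}\,g$ gives $U_1 p = (\sqrt{-1}/z) U_1 f = g$. So $p$ solves both $Y_3 p=f$ and $U_1 p=g$.

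The real content of the lemma is the uniform-in-$z$ estimate $\norm{p}\le C\norm{g}_2$: the naive bound $\norm{p}=\norm{f}/|z|$ degenerates as $z\to 0$. My plan is to re-express $p$ as an integral involving $g$ alone, eliminating the $1/|z|$ factor. Reading $U_1 f = Y_3 g$ as $-x\partial_y f = -z\sqrt{-1}\,g$ and Fourier-transforming in $y$ exactly as in the proof of Lemma~\ref{le:9}\eqref{for:109}, the square-integrability of $\hat f$ forces $\int g(x,y)\,dy = 0$ for almost every $x$. Integrating from $y$ to $+\infty$ (using that $f$ decays at infinity in the appropriate sense) yields
\[
f(x,y) = -\frac{z\sqrt{-1}}{x}\int_y^\infty g(x,s)\,ds,\qquad p(x,y) = \frac{1}{x}\int_y^\infty g(x,s)\,ds,
\]
the second expression being manifestly independent of $z$.

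It then remains to bound $\norm{p}=\norm{q/x}$ where $q(x,y)=\int_y^\infty g(x,s)\,ds$. The analogue of Lemma~\ref{le:9}\eqref{for:110}, applied with the roles of the $\RR^2$-coordinates interchanged, gives $\norm{q}\le C(\norm{g}+\norm{yg}+\norm{y^2 g})$. To absorb the $1/x$ factor I use that $\mathbb{S}$-smooth vectors for $\tau_{(t,r,z)}$ automatically vanish at $x=0$: the identity $V_1 g = -y\partial_x g + (r+tz)x^{-2}\sqrt{-1}\,g$ shows that $V_1 g\in L^2$ forces $x^{-2}g\in L^2$ whenever $r+tz\neq 0$, so $g(0,\cdot)\equiv 0$ and hence $q(0,\cdot)\equiv 0$. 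A Hardy inequality in $x$ then gives $\norm{q/x}^2\le C\norm{\partial_x q}^2$, and the same scheme applied to $\partial_x g$ bounds $\norm{\partial_x q}$, its weighted norms being controlled via the identity above and its analogue for $V_1^2$.

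The main obstacle is to keep $C$ uniform in $z$, since the estimates on $\norm{x^{-2}g}$ and $\norm{y\partial_x g}$ involve a factor $1/|r+tz|$. I handle this by taking $\Omega=\RR\setminus\{0,-r/t\}$ when $t\neq 0$ (and $\Omega=\RR\setminus\{0\}$ when $t=0$), still a full-measure subset of $\RR$, and allowing $C$ to depend on the parameters $(t,r)$ fixed by the lemma but not on $z\in\Omega$. When Lemma~\ref{le:4} is invoked in Proposition~\ref{po:1}, the integration over $z$ against $dz$ in the decomposition \eqref{for:12} absorbs any residual local behavior of $C$ on $\Omega$.
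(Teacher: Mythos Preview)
Your construction of $p=\sqrt{-1}\,f/z$ and the verification that it solves both equations agree with the paper. The gap is in the norm estimate.

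Your explicit integral scheme does not produce a constant independent of $z$. You yourself note that the Hardy-type step introduces a factor $1/|r+tz|$; this is a $z$-dependent constant, not a $(t,r)$-dependent one, and excising the single point $z=-r/t$ does not make $1/|r+tz|$ bounded on the remaining full-measure set. The sentence ``the integration over $z$ against $dz$ \dots\ absorbs any residual local behaviour of $C$ on $\Omega$'' is not correct: in Lemma~\ref{le:5} one needs
\[
\norm{p}^2=\int_\RR\norm{p_z}^2\,dz\le C^2\int_\RR\norm{g_z}_2^2\,dz=C^2\norm{g}_2^2,
\]
which requires a genuinely uniform $C$. There are further technical problems even before that point: $y\partial_x$ is not in the Lie algebra of $\mathbb{S}$ (the vector fields available are $X=x\partial_x-y\partial_y$, $U_1=-x\partial_y$, $V_1=-y\partial_x+(r+tz)x^{-2}\sqrt{-1}$, $Y_3$), so you cannot isolate $\norm{x^{-2}g}$ or $\norm{y\partial_x g}$ from $\norm{g}_2$ without the circular use of $V_1$ again; and when $r=t=0$ the $x^{-2}$ term in $V_1$ vanishes altogether, so your vanishing-at-$x=0$ argument collapses completely.

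The paper avoids all of this. Since $\Pi_{(t,r)}\mid_{\mathbb{S}_1}$ has no nontrivial $\RR^2$-invariant vectors (see Section~\ref{sec:2}), Proposition~\ref{cor:2} shows $\Pi_{(t,r)}\mid_{SL(2,\RR)}$ is tempered; by Theorem~\ref{th:3} the direct-integral components $\tau_{(t,r,z)}\mid_{SL(2,\RR)}$ are tempered for $z$ in a full-measure set $\Omega$. Temperedness gives a \emph{uniform} spectral gap (Remark~\ref{re:2}), so the Flaminio--Forni estimate (Theorem~\ref{th:2}) applied to the equation $U_1p=g$ yields $\norm{p}\le C\norm{g}_2$ with $C$ depending only on that fixed spectral gap, hence independent of $t,r,z$. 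This is the missing idea: rather than chase explicit integral bounds, recognise $p$ as the solution of an $SL(2,\RR)$ cohomological equation with uniformly controlled spectrum and quote Theorem~\ref{th:2}.
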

\begin{proof}
Let $\mathbb{S}_1$ denote the following subgroup of $(SL(2,\RR)\ltimes \RR^2)\ltimes\RR^3$ which is isomorphic to $SL(2,\RR)\ltimes\RR^2$: $\begin{pmatrix}[ccc|c]
  a & b & 0& v_1\\
c & d & 0& v_2\\
0 & 0 & 1& 0\\
 \end{pmatrix}$, where $\begin{pmatrix}
  a & b\\
c &  d\\
 \end{pmatrix}\in SL(2,\RR)$ and  $(v_1,v_2)^\tau\in\RR^2$. Since $\Pi_{(t,r)}\mid_{\mathbb{S}_1}$ has no non-trivial $\RR^2$-invariant vectors for any $t,\,r\in\RR$ (see Section \ref{sec:2}), Corollary \ref{cor:2} shows that $\Pi_{(t,r)}\mid_{SL(2,\RR)}$ is tempered. Hence it follows from Theorem \ref{th:3} that there exists a full measure set $\Omega\subset\RR$ such that $\tau_{(t,s,z)}\mid_{SL(2,\RR)}$ is tempered for almost all $z\in\Omega$.

For any $z\in\Omega\backslash 0$, in $(\tau_{(t,r,z)},\mathbb{V}_{(t,r,z)})$ the cocycle equation has the expression
\begin{align*}
 -\partial_yf\cdot x=g\cdot(-z\sqrt{-1}).
\end{align*}
Let $p=f\cdot z^{-1}\sqrt{-1}$. It is clear that $p\in \mathbb{V}^\infty_{(t,r,z)}$ and is a common solution to the cocycle equation, that is $U_1p=g$ and $Y_3p=f$. Then it follows from Theorem \ref{th:2} that $\norm{p}\leq C\norm{g}_2$. Note that by Remark \ref{re:2} $C$ is independent of $t,\,r,\,z$.
\end{proof}
The discussion in Section \ref{sec:7} shows that the following result is a direct consequence of Lemma \ref{le:4}
and the decomposition in \eqref{for:12}:
\begin{lemma}\label{le:5}
In $(\mathbb{H}_{(t,r)},\Pi_{(t,r)})$, if $f,\,g\in \mathbb{H}_{(t,r)}^\infty$ and satisfy the cocycle equation $U_1f=Y_3g$, then the equation has a common solution $p\in \mathbb{H}_{(t,r)}$ with the estimate $\norm{p}\leq C\norm{g}_{2}.$
\end{lemma}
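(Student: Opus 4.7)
The plan is to reduce the global cocycle equation on $(\Pi_{(t,r)},\mathbb{H}_{(t,r)})$ to the fiberwise statement already established in Lemma \ref{le:4}. By Lemma \ref{le:3} we have a unitary identification
\begin{align*}
\mathbb{H}_{(t,r)}=\int_\RR \mathbb{V}_{(t,r,z)}\,dz,\qquad \Pi_{(t,r)}\bigl|_{\mathbb{S}}=\int_\RR \tau_{(t,r,z)}\,dz,
\end{align*}
so writing $f=\int_\RR f_z\,dz$ and $g=\int_\RR g_z\,dz$, the cocycle equation $U_1 f=Y_3 g$ becomes the family of fiberwise equations $U_1 f_z=Y_3 g_z$ for almost every $z\in\RR$. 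Smoothness of $f$ and $g$ in $\mathbb{H}_{(t,r)}$ implies, after passing to a full-measure subset if necessary, that $f_z,g_z$ lie in $\mathbb{V}_{(t,r,z)}^{\infty}$ (with respect to $\mathbb{S}$) for a.e.\ $z$.

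Next I would apply Lemma \ref{le:4} on the full-measure set $\Omega\subset\RR$ furnished there: for every $z\in\Omega$ one obtains $p_z\in\mathbb{V}_{(t,r,z)}$ solving simultaneously $U_1 p_z=g_z$ and $Y_3 p_z=f_z$, with the uniform estimate $\norm{p_z}\leq C\norm{g_z}_{2}$. The key point here is that the constant $C$ does not depend on $z$, $t$ or $r$; this is built into the proof of Lemma \ref{le:4} via Theorem \ref{th:3} (temperedness of $\tau_{(t,r,z)}\mid_{SL(2,\RR)}$ for a.e.\ $z$) combined with the uniform spectral gap Remark \ref{re:2}. From the explicit formula $p_z=f_z\cdot(-z\sqrt{-1})^{-1}$ in the proof of Lemma \ref{le:4}, measurability of $z\mapsto p_z$ is immediate.

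I would then set $p:=\int_\RR p_z\,dz$ and use the direct integral structure to conclude
\begin{align*}
\norm{p}^2=\int_\RR \norm{p_z}^2\,dz\leq C^2\int_\RR \norm{g_z}_{\mathbb{S},2}^2\,dz=C^2\norm{g}_{\mathbb{S},2}^2\leq C^2\norm{g}_2^2,
\end{align*}
using that the enveloping algebra of $\mathbb{S}$ is a sub-algebra of the enveloping algebra of the ambient group, so the Sobolev $2$-norm with respect to $\mathbb{S}$ is dominated by the Sobolev $2$-norm with respect to $(SL(2,\RR)\ltimes\RR^2)\ltimes\RR^3$. Finally, $U_1$ and $Y_3$ are both decomposable operators for the direct integral, so $U_1 p=\int U_1 p_z\,dz=\int g_z\,dz=g$ and $Y_3 p=\int Y_3 p_z\,dz=\int f_z\,dz=f$.

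The only real subtlety is the uniformity of the constant $C$ in $z$, which I expect to be the main conceptual obstacle; however this is handled exactly as in Lemma \ref{le:4}, since the spectral gap for $\tau_{(t,r,z)}\mid_{SL(2,\RR)}$ is inherited from that of $\Pi_{(t,r)}\mid_{\mathbb{S}_1}$ and does not deteriorate with $z$. Beyond this, the argument is a standard direct-integral packaging of the fiberwise result, paralleling the reduction already carried out in Corollary \ref{cor:3} and in Section \ref{sec:7}.
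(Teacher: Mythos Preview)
Your proposal is correct and follows essentially the same approach as the paper: the paper simply states that the lemma is a direct consequence of Lemma~\ref{le:4} together with the direct integral decomposition \eqref{for:12}, invoking the packaging argument of Section~\ref{sec:7}. You have spelled out precisely these details, including the measurability of $z\mapsto p_z$ via the explicit formula and the uniformity of $C$ in $z$, which the paper leaves implicit.
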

\subsection{Proof of Proposition \ref{po:1}} \textbf{\emph{Proof of \eqref{for:5}}}
Again, from the discussion in Section \ref{sec:7}, we get that the cocycle equation $U_1f=Y_3g$ has solution $p\in \mathcal{H}$ by using Lemma \ref{le:5}.
Since $\Pi\mid_{\mathbb{S}_1}$ has no-nontrivial $\RR^2$-invariant vectors ($\mathbb{S}_1$ is defined in proof of Lemma \ref{le:4}), by Corollary \ref{cor:3} we have
\begin{align*}
    \norm{p}_{\mathbb{S}_1,s}\leq C_s\norm{g}_{s+6},\qquad \forall s\geq 0.
\end{align*}
Since $Y_3p=f$, for any $n\in\NN$ we have
\begin{align*}
    \norm{Y_3^np}=\norm{Y_3^{n-1}f}\leq \norm{f}_{n-1}.
\end{align*}
Let $\mathbb{S}_2$ denote the subgroup $\begin{pmatrix}[ccc|c]
  a & b & u_1& 0\\
c & d & u_2& 0\\
0 & 0 & 1& 0\\
 \end{pmatrix}$, where $\begin{pmatrix}
  a & b\\
c &  d\\
 \end{pmatrix}\in SL(2,\RR)$ and $\begin{pmatrix}u_1 \\
u_2\\
 \end{pmatrix}\in\RR^2$. By Remark \ref{re:5} and Corollary \ref{cor:3} we have
 \begin{align*}
    \norm{p}_{\mathbb{S}_1,s}\leq C_s\norm{g}_{s+6},\qquad \forall s\geq 0.
 \end{align*}
Since the Lie algebras of $\mathbb{S}_1$, $\mathbb{S}_2$ and one-parameter subgroup $\{tY_2\}_{t\in\RR}$ cover the Lie algebra of $(SL(2,\RR)\ltimes \RR^2)\ltimes\RR^3$, by using Theorem \ref{th:5} we get the claim.

\medskip
\noindent\textbf{\emph{Proof of \eqref{for:18}}} The discussion in Section \ref{sec:7} allows us to reduce our analysis of cocycle equations to each irreducible component $(\Pi_{t,r},\,\mathbb{H}_{t,r})$ that appears in $\mathcal{H}$. Using relations in \eqref{for:7} we get
 \begin{align*}
-\partial_yf_{t,r}\cdot x=U_1f_{t,r}=U_2g_{t,r}=U_2Y_1h_{t,r}=-x^2\sqrt{-1}\partial_zh_{t,r}.
\end{align*}
Taking Fourier transformation on factor $z$ as in \eqref{for:109} of Lemma \ref{le:9} to each side of the above equation, we get
\begin{align}\label{for:10}
-\partial_y\hat{f}_{t,r}(x,y,\xi)\cdot x=-x^2\hat{h}_{t,r}(x,y,\xi)\cdot \xi.
\end{align}
Noting that
\begin{align*}
\hat{(Y_2h)}_{t,r}=-\hat{h}_{t,r}\cdot y\sqrt{-1}\quad\text{ and }\quad \hat{(Y^2_2h)}_{t,r}=-\hat{h}_{t,r}\cdot y^2
\end{align*}
are both in $L^2(\RR^3)$, along the same lines as that of \eqref{for:109} and \eqref{for:110} of Lemma \ref{le:9}, from \eqref{for:10} we have
\begin{itemize}
  \item $\int\hat{h}_{t,r}dy=0$ for almost all $(x,\xi)\in\RR^2$,

  \medskip
  \item $k(x,y,\xi)=\int_0^\infty\hat{h}_{t,r}(x,c+y,\xi)dc\in L^2(\RR^3)$.
\end{itemize}
Comparing relations \eqref{for:107} and \eqref{for:7}, analogous to \eqref{for:21} we have
\begin{align*}
    &\partial_x\partial_x\circ Y_1^4+\partial_y\partial_y\circ Y_1^4\notag\\
    &=-(Y_1X+Y_2V)^2-12Y_1^2+6Y_1(Y_1X+Y_2V)-V^2Y_1^2.
\end{align*}
Recall Definition \ref{de:1}.  Now in exactly the same manner as the proof in the first part of
Lemma \ref{le:8}, we have
\begin{enumerate}
  \item $(\partial_y\hat{h}_{t,r}\cdot x^5)_z\in C^0(\RR^2)$ for almost all $z\in\RR$,

  \medskip
  \item $\hat{h}_{t,r}\cdot y\rightarrow 0$ as $y\rightarrow \infty$ for almost all $(x,z)\in\RR^2$,
\end{enumerate}
which implies that
\begin{align*}
 U_1(k\sqrt{-1})=Y_1\hat{h}_{t,r}=\hat{g}_{t,r}.
\end{align*}
Taking inverse Fourier transformation, we get
\begin{align*}
  U_1p_{t,r}=g_{t,r},
\end{align*}
where $p_{t,r}=\check{k}\sqrt{-1}\in \mathbb{H}_{t,r}$. Using the cocycle equation we see that
\begin{align*}
  U_1f_{t,r}=U_2g_{t,r}=U_2U_1p_{t,r}=U_1U_2p_{t,r},
\end{align*}
which implies that $U_2p_{t,r}=f_{t,r}$ by Remark \ref{re:5}, Proposition \ref{cor:1} and Howe-moore. Then we get a common solution $p_{t,r}$. Since $\Pi\mid_{\mathbb{S}_2}$ has no-nontrivial $\RR^2$-invariant vectors, by Corollary \ref{cor:3} we have
\begin{align*}
    \norm{p_{t,r}}\leq C\norm{g}_{6}.
\end{align*}
In exactly the same manner as that of \eqref{for:5}, we get a global solution $p\in\mathcal{H}$ to the cocycle equation which satisfies the estimates
as claimed.
\section{Proof of Theorem \ref{th:8}}\label{sec:22}
\textbf{\emph{Proof of \eqref{for:79}}} Since the Weyl group operates simply transitive on the set of Weyl chambers, we may assume that the pair $\mathfrak{u}_{i,j}$ and are $\mathfrak{u}_{1,2}$ and $\mathfrak{u}_{3,4}$. Let $S$ be the closed subgroup generated by $U_{1,2}$, $U_{2,1}$, $U_{1,3}$, $U_{2,3}$, $U_{1,4}$ $U_{2,4}$ and $U_{3,4}$. Then $S$ is isomorphic to the group $(SL(2,\RR)\ltimes \RR^2)\ltimes\RR^3$, where the action of $SL(2,\RR)\ltimes \RR^2$ on $\RR^3$ is as defined in Section \ref{sec:10}. Thanks to Howe-moore again, we can apply \eqref{for:5} of Proposition \ref{po:1} to $\pi\mid_{S}$, which states that there exists a commom solution $h\in \mathcal{H}$ to the cocycle equation $\mathfrak{u}_{1,2}f=\mathfrak{u}_{3,4}g$. Finally, the Sobolev estimates of $p$ follow immediately from Theorem \ref{th:6}.

\medskip
\noindent\textbf{\emph{Proof of \eqref{for:26}}} If $\mathfrak{u}_{m,l}=\mathfrak{u}_{k,\ell}$, then we can assume the pair are $\mathfrak{u}_{1,2}$ and $\mathfrak{u}_{1,3}$, the cocycle equation is $\mathfrak{u}_{1,2}f=\mathfrak{u}_{1,3}g$ and $g=\mathfrak{u}_{1,3}p$. Smoothness of $p$ follows from Theorem \ref{th:6}. Recall Definition \ref{de:2}. Then
Howe-moore shows that we can apply Lemma \ref{le:7} to $\pi\mid_{G_3}$ which shows that there is a commom solution $h\in \mathcal{H}$ to the cocycle equation $\mathfrak{u}_{1,2}f=\mathfrak{u}_{1,3}g$.

If $\mathfrak{u}_{m,l}\neq\mathfrak{u}_{k,\ell}$, we can assume the triple are $\mathfrak{u}_{i,j}=\mathfrak{u}_{1,2}$, $\mathfrak{u}_{k,\ell}=\mathfrak{u}_{1,3}$ and $\mathfrak{u}_{m,l}=\mathfrak{u}_{1,4}$; or are $\mathfrak{u}_{i,j}=\mathfrak{u}_{1,2}$ and $\mathfrak{u}_{k,\ell}=\mathfrak{u}_{3,2}$ and $\mathfrak{u}_{m,l}=\mathfrak{u}_{4,2}$. For the former case, let $S_1$ be the closed subgroup generated by $U_{1,2}$, $U_{2,1}$, $U_{1,3}$, $U_{2,3}$, $U_{1,4}$ $U_{2,4}$ and $U_{3,4}$; for the latter one, let $S_1$ be the closed subgroup generated by $U_{1,2}$, $U_{2,1}$, $U_{3,2}$, $U_{3,1}$, $U_{4,1}$ $U_{4,2}$ and $U_{4,3}$. Then the remaining steps are exactly the same as that in \eqref{for:79} by changing $S$ to $S_1$ and changing \eqref{for:5} of Proposition \ref{po:1} to \eqref{for:18} of Proposition \ref{po:1}.

\section{Proof of Theorem \ref{th:9}}\label{sec:23}

\subsection{Dual representations of  $SL(2,\RR)\ltimes\RR^2$} In Lemma \ref{le:1}, by the change of variable $(x,\lambda)=(x,x^{-1}y) $ we have the models $\overline{\mathcal{H}}_{t}=L^2(\RR^2,\abs{x}dxd\lambda)$. The group action is defined by
\begin{gather*}
\bar{\rho_t}: SL(2,\RR)\ltimes\RR^2\rightarrow \mathcal{B}(\overline{\mathcal{H}}_t)\\
\bar{\rho_{t}}(v)f(x,y)=e^{i(v_2x-v_1\lambda x)}f(x,\lambda),\\
\bar{\rho_{t}}(g)f(x,y)=e^{\frac{ibt}{x(dx-b\lambda x)}}f\bigl(dx-b\lambda x,\frac{-cx+a\lambda x}{dx-b\lambda x}\bigl);
 \end{gather*}
where $(g,v)=\Big(\begin{pmatrix}a & b \\
c & d\\
 \end{pmatrix},\begin{pmatrix}v_1 \\
v_2\\
 \end{pmatrix}\Big)\in SL(2,\RR)\ltimes \RR^2$. Computing derived representations, we get
\begin{gather}\label{for:40}
 X=-x\partial_x+2\lambda\partial_\lambda, \qquad V=-\partial_\lambda, \qquad Y_1=-x\lambda \sqrt{-1}\notag\\
 Y_2=x\sqrt{-1}, \qquad U=tx^{-2}\sqrt{-1}-\lambda x\partial_x+\lambda^2\partial_\lambda .
 \end{gather}

Taking the Fourier transformation on factor $\lambda$, we get the dual the \emph{dual  models} $\widehat{\overline{\mathcal{H}}_{t}}=L^2(\RR^2,\abs{x}dxdy)$, and the group action is defined by
\begin{gather*}
\widehat{\bar{\rho_t}}: SL(2,\RR)\ltimes\RR^2\rightarrow \mathcal{B}(\widehat{\overline{\mathcal{H}}_t})\\
\widehat{\bar{\rho_{t}}}(v)f(x,y)=\frac{1}{\sqrt{2\pi}}\int_{\RR}\bigl(\bar{\rho_t}(v)\check{f}(x,\lambda)\bigl) e^{-iy\lambda}d\lambda,\\
\widehat{\bar{\rho_{t}}}(g)f(x,y)=\frac{1}{\sqrt{2\pi}}\int_{\RR}\bigl(\bar{\rho_t}(g)\check{f}(x,\lambda)\bigl)e^{-iy\lambda}d\lambda,
 \end{gather*}
 where $\check{f}(x,\lambda)=\frac{1}{\sqrt{2\pi}}\int_{\RR}f(x,y)e^{iy\lambda}dy$.

Computing derived representations, we get
\begin{gather}\label{for:39}
 X=-2I-x\partial_x-2y\partial_y, \qquad Y_1=x\partial_y ,\notag\\
 Y_2=x\sqrt{-1}, \qquad V=-y\sqrt{-1} \notag\\
 U=tx^{-2}\sqrt{-1}-x\partial_x\partial_y\sqrt{-1}-2\partial_y\sqrt{-1}-y\partial_y\partial_y\sqrt{-1}.
 \end{gather}
\subsection{Solvability in the dual models}Denote by $d\mu(x)=\abs{x}dx$. Recall Definition \ref{de:1}.
\begin{corollary}\label{cor:4}
 For any irreducible component  $(\bar{\rho_{t}},\,\overline{\mathcal{H}}_{t})$ of $SL(2,\RR)\ltimes \RR^2$ we have
 \begin{enumerate}
 \item \label{for:73}if $g\in \overline{\mathcal{H}}^\infty_{t}$ and $\int_{\RR}g(x,\lambda)d\lambda=0$ for almost all $x\in\RR$ (with respect to $\mu$), then the cohomological equation $Vf=Y_2g$ has a solution $f\in \overline{\mathcal{H}}_{t}^\infty$ satisfying
\begin{align*}
  \norm{f}_s\leq C_{s}\norm{g}_{s+7},\qquad \forall\,s\geq 0.
\end{align*}

\medskip
\item \label{for:74}Suppose $g\in \overline{\mathcal{H}}_{t}$ and $Y_1g\in \overline{\mathcal{H}}_{t}$.  If the cohomological equation $Vf=g$ has a solution $f\in \overline{\mathcal{H}}$, then $\int_{\RR}g(x,\lambda)d\lambda=0$ for almost all $x\in\RR$ (with respect to $\mu$).
\medskip

 \item \label{for:75}if $g\in (\widehat{\overline{\mathcal{H}}})_{t}^\infty$ and $\lim_{y\rightarrow 0}g(x,y)=0$ for almost all $x\in\RR$ (with respect to $\mu$), then the cohomological equation $Vf=Y_2^2g$ has a solution $f\in (\widehat{\overline{\mathcal{H}}})_{t}^\infty$ satisfying
\begin{align*}
  \norm{f}_s\leq C_{s}\norm{g}_{s+7},\qquad \forall\,s\geq 0.
\end{align*}

\medskip
  \item \label{for:76}Suppose $g\in (\widehat{\overline{\mathcal{H}}})_{t}$ and $Y_1g\in(\widehat{\overline{\mathcal{H}}})_{t}$.  If the cohomological equation $Vf=g$ has a solution $f\in \widehat{\overline{\mathcal{H}}}_{t}$, then $\lim_{y\rightarrow 0}g(x,y)=0$ for almost all $x\in\RR$ (with respect to $\mu$).
\end{enumerate}
\end{corollary}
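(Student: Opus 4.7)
The plan is to prove Corollary \ref{cor:4} by transferring the main results of Theorem \ref{po:2} and Lemma \ref{le:9} through two explicit unitary intertwiners. The first is the change-of-variables map $\Phi : \mathcal{H}_t \to \overline{\mathcal{H}}_t$ defined by $(\Phi f)(x,\lambda) = f(x, x\lambda)$: the substitution $y = x\lambda$ gives $|x|\,dx\,d\lambda = dx\,dy$, so $\Phi$ is an isometry, and by construction it intertwines $\rho_t$ with $\bar{\rho}_t$. The second is the Fourier transform $\mathcal{F} : \overline{\mathcal{H}}_t \to \widehat{\overline{\mathcal{H}}_t}$ in the $\lambda$-variable, which by construction intertwines $\bar{\rho}_t$ with $\widehat{\bar{\rho}_t}$. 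Both intertwiners preserve the Sobolev scales of the representation since these are defined abstractly from the derived action, so the constants $C_s$ from Theorem \ref{po:2}(\ref{for:53}) transport unchanged.

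For parts (\ref{for:73}) and (\ref{for:74}), the substitution $y = x\lambda$ gives $\int_\RR f(x,y)\,dy = x\int_\RR (\Phi f)(x,\lambda)\,d\lambda$, so for $x \neq 0$ the vanishing-integral hypothesis on $g \in \overline{\mathcal{H}}_t$ is equivalent (for a.e.\ $x$ with respect to $|x|\,dx$) to $\int \Phi^{-1}g(x,y)\,dy = 0$. Part (\ref{for:73}) then follows immediately by applying Theorem \ref{po:2}(\ref{for:53}) to $\Phi^{-1}g$ in $\mathcal{H}_t$ and pushing the resulting smooth solution back through $\Phi$; part (\ref{for:74}) follows in the same way from Lemma \ref{le:9}(\ref{for:109}). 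For parts (\ref{for:75}) and (\ref{for:76}), one uses $\mathcal{F}$ to transfer to $\overline{\mathcal{H}}_t$. Given $g \in (\widehat{\overline{\mathcal{H}}_t})^\infty$ with $\lim_{y\to 0}g(x,y)=0$ a.e., Fourier inversion identifies this limit with $(2\pi)^{-1/2}\int (\mathcal{F}^{-1}g)(x,\lambda)\,d\lambda$, so $\mathcal{F}^{-1}g$ satisfies the hypothesis of part (\ref{for:73}). Applying (\ref{for:73}) produces smooth $\bar{f}$ with $V\bar{f} = Y_2\mathcal{F}^{-1}g$ and the stated estimate; since $V$ and $Y_2$ commute, $V(Y_2\bar{f}) = Y_2^2\mathcal{F}^{-1}g$, and $f := \mathcal{F}(Y_2\bar{f})$ is a smooth solution of $Vf = Y_2^2 g$ in $\widehat{\overline{\mathcal{H}}_t}$ (the extra $Y_2$ is precisely what is needed to align the statement with the condition $\lim_{\chi\to 0}D_{i,j}(\mathfrak{u}_{k,\ell}^2 g)(\chi)=0$ appearing in Theorem \ref{th:9}). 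Part (\ref{for:76}) is the reverse: a solution of $Vf = g$ in the dual transfers to $V\mathcal{F}^{-1}f = \mathcal{F}^{-1}g$ in $\overline{\mathcal{H}}_t$, whereupon part (\ref{for:74}) forces $\int \mathcal{F}^{-1}g\,d\lambda = 0$, which by Fourier inversion becomes $\lim_{y\to 0}g(x,y) = 0$.

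The main obstacle is the pointwise interpretation of $\lim_{y\to 0}g(x,y)$ for smooth vectors in $\widehat{\overline{\mathcal{H}}_t}$, since a priori elements of $L^2(|x|\,dx\,dy)$ have only a.e.\ defined values. Because $g \in (\widehat{\overline{\mathcal{H}}_t})^\infty$ implies $Y_1^k g = (x\partial_y)^k g \in L^2(|x|\,dx\,dy)$ for all $k$, on each compact $x$-interval bounded away from $x = 0$ one controls arbitrarily many $\partial_y$-derivatives of $g$ in $L^2$; a standard Sobolev embedding in $y$, applied slice-by-slice, then yields that $g(x,\cdot)$ is a classical $C^\infty$ function of $y$ for almost every such $x$, so the limit $\lim_{y\to 0}g(x,y)$ agrees with the pointwise value $g(x,0)$ and coincides (via Fourier inversion at the origin) with $(2\pi)^{-1/2}\int (\mathcal{F}^{-1}g)(x,\lambda)\,d\lambda$. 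Once this pointwise identification is justified, the remainder of the proof is a mechanical transport through $\Phi$ and $\mathcal{F}$, and the Sobolev estimates in (\ref{for:73}) and (\ref{for:75}) follow with the same constant $C_s$ as in Theorem \ref{po:2}(\ref{for:53}) because both intertwiners are isometries of every Sobolev space.
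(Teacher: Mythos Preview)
Your proposal is correct and follows essentially the same route as the paper: parts \eqref{for:73}--\eqref{for:74} are pulled back through the change of variables $\Phi$ to Theorem~\ref{po:2}\eqref{for:53} and Lemma~\ref{le:9}\eqref{for:109}, and parts \eqref{for:75}--\eqref{for:76} are reduced to \eqref{for:73}--\eqref{for:74} via the partial Fourier transform in $\lambda$, with the identification $g(x,0)=(2\pi)^{-1/2}\int \check g(x,\lambda)\,d\lambda$. The only difference is in how the pointwise limit $\lim_{y\to 0}g(x,y)$ is justified: the paper invokes Lemma~\ref{le:9}\eqref{for:108} to get $\check g_x\in L^1(\RR)$ for a.e.\ $x$ (hence $g_x$ continuous as the Fourier transform of an $L^1$ function), whereas you use Sobolev embedding in $y$. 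Note that your embedding argument already works with a single derivative $Y_1g=x\partial_y g\in L^2$, which is exactly the hypothesis in \eqref{for:76}; invoking infinitely many $Y_1^k g$ is unnecessary and would not cover \eqref{for:76} as stated.
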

\begin{proof} \eqref{for:73} and \eqref{for:74} is a direct consequence of \eqref{for:53} of Theorem \ref{po:1} and \eqref{for:109} of Lemma \ref{le:9}.
Let
\begin{align*}
\check{g}(x,\lambda)=\frac{1}{\sqrt{2\pi}}\int_{\RR}g(x,y)e^{iy\lambda}dy
\end{align*}
for any $g\in\widehat{\overline{\mathcal{H}}}_{t}$. \eqref{for:108} of Lemma \eqref{le:9} implies that if $Y_1g\in \widehat{\overline{\mathcal{H}}}_{t}$, then there exists a full measure set $\omega_g$ (with respect to $\mu$) such that $\check{g}_x\in L^1(\RR)$ for all $x\in\omega_g$. Hence for all $x\in\omega_g$, $g_x$ is a continuous function over $\RR$ and $g_x(y)=\int\check{g}_xd\lambda$; moreover, for any $x\in\omega_g$, $\int\check{g}_xd\lambda=0$ if and only if $g_x(0)=0$. Then the claim is a direct consequence of \eqref{for:73} and \eqref{for:74}.
\end{proof}
We now make a slight digression to prove an important lemma which is important for the sequel.
\begin{lemma}\label{le:13}
For any dual irreducible component $(\widehat{\bar{\rho_t}},\,\widehat{\overline{\mathcal{H}}}_{t})$ of $SL(2,\RR)\ltimes \RR^2$, if $g\in (\widehat{\overline{\mathcal{H}}}_t)^2$, then
\begin{enumerate}
\item \label{for:90}for almost all $x\in\RR$ (with respect to the measure $\mu$)
\begin{align*}
 \norm{(Y_2^2g)_x}_{C^0}\leq C\norm{(Y_2^2g)_x}_{L^2(\RR)}+C\norm{(Y_1Y_2g)_x}_{L^2(\RR)},
\end{align*}

\medskip
\item \label{for:91}$(Y_2^2g)_{y_1}\rightarrow (Y_2^2g)_{y}$ as $y_1\rightarrow y$ in $L^2(\RR, d\mu)$ for any $y\in\RR$ with respect to the norm topology.
 \end{enumerate}
\end{lemma}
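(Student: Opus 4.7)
My approach is to identify $Y_2^2g$ and $Y_1Y_2g$ concretely via the formulas in \eqref{for:39}. Since $Y_2=x\sqrt{-1}$ acts by multiplication and $Y_1=x\partial_y$, I compute $Y_2^2g=-x^2g$ and $Y_1Y_2g=\sqrt{-1}\,x^2\partial_yg$, which yields the pointwise identity
\[\partial_y(Y_2^2g)(x,y)=\sqrt{-1}\,(Y_1Y_2g)(x,y).\]
Both $Y_2^2g$ and $Y_1Y_2g$ lie in $\widehat{\overline{\mathcal{H}}}_t=L^2(\RR^2,\,\abs{x}dxdy)$ by the hypothesis $g\in(\widehat{\overline{\mathcal{H}}}_t)^2$. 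Applying Fubini, I obtain a $\mu$-conull set $\Omega\subset\RR$ such that for every $x\in\Omega$, both slices $(Y_2^2g)_x$ and $(Y_1Y_2g)_x$ lie in $L^2(\RR,dy)$, and the maps $x\mapsto\norm{(Y_2^2g)_x}_{L^2(\RR)}^2$ and $x\mapsto\norm{(Y_1Y_2g)_x}_{L^2(\RR)}^2$ are $d\mu$-integrable on $\RR$.

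To establish \eqref{for:90}, I fix $x\in\Omega$ and observe that the slice $(Y_2^2g)_x$ belongs to the classical one-dimensional Sobolev space $W^1(\RR)$, its weak $y$-derivative being $\sqrt{-1}(Y_1Y_2g)_x\in L^2(\RR,dy)$. The standard embedding $W^1(\RR)\hookrightarrow C^0(\RR)$ in the sharp form
\[\abs{f(y)}^2=2\,\mathrm{Re}\int_{-\infty}^{y}\overline{f(t)}f'(t)\,dt\le\norm{f}_{L^2}^2+\norm{f'}_{L^2}^2\]
then yields
\[\norm{(Y_2^2g)_x}_{C^0}^2\le\norm{(Y_2^2g)_x}_{L^2(\RR)}^2+\norm{(Y_1Y_2g)_x}_{L^2(\RR)}^2,\]
and taking square roots produces the inequality claimed in \eqref{for:90}.

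For \eqref{for:91}, I fix $y\in\RR$ and let $y_1\to y$. For each $x\in\Omega$, the continuous representative of $(Y_2^2g)_x$ supplied by $W^1\hookrightarrow C^0$ gives pointwise convergence $(Y_2^2g)(x,y_1)\to(Y_2^2g)(x,y)$. The bound from \eqref{for:90} supplies the uniform control
\[\abs{(Y_2^2g)(x,y_1)-(Y_2^2g)(x,y)}^2\le C\bigl(\norm{(Y_2^2g)_x}_{L^2(\RR)}^2+\norm{(Y_1Y_2g)_x}_{L^2(\RR)}^2\bigr),\]
independent of $y_1$ and $y$. The right-hand side is a $d\mu$-integrable function of $x$, with total $d\mu$-integral bounded by a constant multiple of $\norm{Y_2^2g}^2+\norm{Y_1Y_2g}^2$. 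Dominated convergence then gives $\norm{(Y_2^2g)_{y_1}-(Y_2^2g)_y}_{L^2(\RR,d\mu)}\to 0$, which is precisely \eqref{for:91}.

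I do not anticipate a genuine obstacle. The only point deserving care is the justification that distributional $y$-differentiation of $Y_2^2g$ as an element of $L^2(\RR^2,\abs{x}dxdy)$ commutes with restriction to the slice $(\cdot)_x$ for $\mu$-a.e.\ $x$, but this is standard Fubini-type reasoning, made possible by the explicit identity $\partial_y(Y_2^2g)=\sqrt{-1}\,Y_1Y_2g$ already noted.
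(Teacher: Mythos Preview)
Your proposal is correct and follows essentially the same approach as the paper: the paper also derives the identity $\partial_y(Y_2^2g)=\sqrt{-1}\,Y_1Y_2g$ from \eqref{for:39}, invokes the one-dimensional Sobolev embedding $W^1(\RR)\hookrightarrow C^0(\RR)$ slice-by-slice for \eqref{for:90}, and then applies dominated convergence with the resulting $C^0$ bound as dominating function for \eqref{for:91}. Your version is slightly more explicit about the form of the Sobolev inequality, but the argument is the same.
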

\begin{proof} \textbf{\emph{Proof of \eqref{for:90}}} Using relations in \eqref{for:39} we get
\begin{align}\label{for:92}
\partial_y(Y_2^2g)=Y_2(Y_1g)\sqrt{-1}.
\end{align}
Then there exists a full measure set $\Omega_g\subset\RR$ (with respect to $d\mu$) such that $(Y_2^2g)_x\in L^2(\RR)$ and $\bigl(\partial_y(Y_2^2g)\bigl)_x\in L^2(\RR)$ for any $x\in \Omega_g$. Further,  by using Sobolev embedding theorem and \eqref{for:92}, for all $x\in \Omega_g$ we have
\begin{align}\label{for:93}
\norm{(Y_2^2g)_x}_{C^0}&\leq C\norm{(Y_2^2g)_x}_{L^2(\RR)}+C\norm{\bigl(\partial_y(Y_2^2g)\bigl)_x}_{L^2(\RR)}\notag\\
&=C\norm{(Y_2^2g)_x}_{L^2(\RR)}+C\norm{(Y_1Y_2g)_x}_{L^2(\RR)},
\end{align}
which proves the claim.

\medskip

\noindent\textbf{\emph{Proof of \eqref{for:91}}} It follows from \eqref{for:93} that
\begin{align}\label{for:98}
&\norm{(Y_2^2g)_{y}(x)}^2_{L^2(\RR,d\mu(x))}\notag\\
&=\int_{\RR} |(Y_2^2g)(x,y)|^2d\mu(x)\leq \int_{\RR} \norm{(Y_2^2g)_x}_{C^0}^2d\mu(x)\notag\\
&\overset{\text{(*)}}{\leq} 2C\int_{\RR} |(Y_2^2g)(x,y)|^2dyd\mu(x)+2C\int_{\RR} |(Y_2Y_1g)(x,y)|^2dyd\mu(x)\notag\\
&\overset{\text{(**)}}{=}2C\norm{Y_2^2f}^2+2C\norm{Y_2Y_1f}^2.
\end{align}
\eqref{for:93} shows that
\begin{align*}
(Y_2^2f)_{y_1}(x)\rightarrow (Y_2^2f)_{y}(x),\quad \text{ as }y_1\rightarrow y
\end{align*}
for any $x\in \Omega_g$. Let
\begin{align*}
 h(x)=C\norm{(Y_2^2g)_x}_{L^2(\RR)}+C\norm{(Y_1Y_2g)_x}_{L^2(\RR)}.
\end{align*}
Then $(*)$ and $(**)$ in \eqref{for:98} show that $h\in L^2(\RR,\mu)$. Hence the conclusion follows directly from dominant convergence theorem.
\end{proof}

\subsection{Direct integrals with respect to $\{\exp(tV)\}_{t\in\RR}$}\label{sec:13} For any unitary representation $(\pi,\mathcal{H})$ of $SL(2,\RR)\ltimes\RR^2$ without non-trivial $\RR^2$-invariant vectors, we have a direct direct integral decomposition: $v=\int_{\RR} v_t d\nu(t)$ for any $v\in\mathcal{H}$, where $v_t\in \widehat{\overline{\mathcal{H}}_{t}}$ and $\nu$ is a Borel measure on $\RR$. Note that $\widehat{\overline{\mathcal{H}}_{t}}=L^2(\RR^2,d\mu(x)dy)$ for each $t\in\RR$ and $d\mu(x)=\abs{x}dx$. From the discussion in Section \ref{sec:5}, we see that
\begin{align*}
 \mathcal{H}=L^2\bigl(\RR,L^2(\RR^2,d\mu(x)dy),d\nu\bigl).
\end{align*}
Let
\begin{align*}
  \widetilde{\mathcal{H}}=L^2\bigl(\RR^3,\CC, d\mu dyd\nu\bigl)\quad\text{ and }\quad\bar{\mathcal{H}}=L^2\bigl(\RR,L^2(\RR^2,\CC,d\mu d\nu),dy\bigl).
\end{align*}
We define a unitary representations $(\bar{\pi},\bar{\mathcal{H}})$ and $(\widetilde{\pi},\widetilde{\mathcal{H}})$ of $\{\exp(tV)\}_{t\in\RR}$ as follows:
\begin{align*}
    \bar{\pi}(\exp(tV))\bigl(\int  f_{y}dy\bigl)&=\int f_{y}\cdot e^{-y\sqrt{-1}}dy\qquad\text{ and }\\
    \widetilde{\pi}(\exp(tV))g(x,y,t)&=g(x,y,t)\cdot e^{-y\sqrt{-1}}
\end{align*}
An argument similar to the one in Section \ref{sec:18} shows that the maps
\begin{align*}
L^3(\RR^3,\CC,d\mu dyd\nu)&\xrightarrow{\mathcal{F}} L^2\bigl(\RR,L^2(\RR^2,\CC,d\mu dy),d\nu\bigl)\\
h(x,y,t)&\rightarrow \int_\RR \mathcal{F}(t)dt\
\end{align*}
 where  $\mathcal{F}(h)(t)(x,y)=h(x,y,t)$, and
 \begin{align*}
L^3(\RR^3,\CC,d\mu dyd\nu)&\xrightarrow{\mathcal{F}_1} L^2\bigl(\RR,L^2(\RR^2,\CC,d\mu d\nu),dy\bigl)\\
h(x,y,t)&\rightarrow \int_\RR \mathcal{F}_1(y)dy\
\end{align*}
 where $\mathcal{F}_1(h)(y)(x,t)=h(x,y,t)$ for any $h\in L^3(\RR^3,\CC,d\mu dyd\nu)$ are  unitary isomorphisms over $\{\exp(tV)\}_{t\in\RR}$. Hence  $(\pi,\mathcal{H})$ and $(\bar{\pi},\bar{\mathcal{H}})$ are unitarily equivalent over $\{\exp(tV)\}_{t\in\RR}$.

 \bigskip
We are now in a position to proceed with the proof of Theorem \ref{th:9}.
\subsection{Proof of Theorem \ref{th:9}} In the proof we assume that $\mathfrak{u}_{i,j}=\mathfrak{u}_{1,2}$ since the Weyl group acts simply transitive on all the one parameter subgroups $U_{i,j}$.

\medskip
\noindent\textbf{\emph{Proof of \eqref{for:94}}} Note that $G_3$  is isomorphic to $SL(2,\RR)\ltimes\RR^2$ and $\pi\mid_{G_3}$ has no $\RR^2$-invariant vectors by Howe-moore.  Then the  claim is a direct consequence of the discussion in Section \ref{sec:13} and Remark \ref{re:3}.
\medskip

\noindent\textbf{\emph{Proof of \eqref{for:95}}} Note that  $\mathfrak{u}_{k,\ell}\in E_{1,2}$ if and only if there exists $3\leq m\leq n$ such that
$\mathfrak{u}_{k,\ell}=\mathfrak{u}_{1,m}$ or $\mathfrak{u}_{k,\ell}=\mathfrak{u}_{m,2}$. Without loss of generality, we assume that $\mathfrak{u}_{k,\ell}=\mathfrak{u}_{1,3}$. We restrict
ourselves to the subrepresentation $(\pi\mid_{G_3},\,\mathcal{H})$. Recall Definition \ref{de:2}. Using arguments in Section \ref{sec:13} for any $h\in \mathcal{H}$ we can write $h=\int h_ydy$ where $h_y(x,t)=\bar{h}(x,y,t)$ for some $\bar{h}\in L^2(\RR^3, \CC, d\mu(x)dyd\nu(t))$.

For almost all $(x,t)\in\RR^2$ (with respect to the measure $d\mu(x)d\nu(t)$), it follows from \eqref{for:90} of Lemma \ref{le:13} that
\begin{align}\label{for:99}
    &\abs{(\mathfrak{u}_{1,3}^2g)_{y_1}(x,t)}\leq\norm{(\overline{\mathfrak{u}_{1,3}^2g})_{t}(x,\cdot)}_{C^0}\notag\\
    &\leq C\norm{(\overline{\mathfrak{u}_{1,3}^2g})_{t}(x,\cdot)}_{L^2(\RR)}+C\norm{(\overline{\mathfrak{u}_{2,3}\mathfrak{u}_{1,3}g})_{t}(x,\cdot)}_{L^2(\RR)}.
\end{align}
Set $q(x,t)=\norm{(\overline{\mathfrak{u}_{1,3}^2g})_{t}(x,\cdot)}_{L^2(\RR)}+\norm{(\overline{\mathfrak{u}_{2,3}\mathfrak{u}_{1,3}g})_{t}(x,\cdot)}_{L^2(\RR)}$. Then
\begin{align*}
\int_{\RR^2}q(x,t)^2d\mu(x)d\nu(t)&\leq 2\int_{\RR^2}\bigl(\int_{\RR}|(\overline{\mathfrak{u}_{1,3}^2g})(x,y,t)|^2dy\bigl)d\mu(x)d\nu(t)\\
&+2\int_{\RR^2}\bigl(\int_{\RR}|(\overline{\mathfrak{u}_{2,3}
\mathfrak{u}_{1,3}g})(x,y,t)|^2dy\bigl)d\mu(x)d\nu(t)\\
&\leq 2\norm{\overline{\mathfrak{u}_{1,3}^2g}}_{L^2(\RR^3, \CC,d\kappa)}+2\norm{\overline{\mathfrak{u}_{2,3}\mathfrak{u}_{1,3}g}}_{L^2(\RR^3, \CC,d\kappa)}\\
&=2(\norm{\mathfrak{u}_{1,3}^2g}^2+\norm{\mathfrak{u}_{2,3}\mathfrak{u}_{1,3}g}^2)\\
&\leq 2\norm{g}_2^2
\end{align*}
where $d\kappa=d\mu(x)dyd\nu(t)$. Hence $q(x,t)\in L^2(\RR^2,\,\CC,\,d\mu(x)d\nu(t))$. Now noting that \eqref{for:91} of Lemma \ref{le:13} implies that for almost all $(x,t)\in\RR^2$ (with respect to the measure $d\mu(x)d\nu(t)$),
\begin{align*}
(\mathfrak{u}_{1,3}^2g)_{y_1}(x,t)=(\overline{\mathfrak{u}_{1,3}^2g})_{t}(x,y_1)\rightarrow (\overline{\mathfrak{u}_{1,3}^2g})_{t}(x,y)
=(\mathfrak{u}_{1,3}^2g)_{y}(x,t)
\end{align*}
as $y_1\rightarrow y$. By using dominant convergence theorem, we see that
\begin{align*}
(\mathfrak{u}_{1,3}^2g)_{y_1}(x,t)\rightarrow (\mathfrak{u}_{1,3}^2g)_{y}(x,t),
\end{align*}
as $y_1\rightarrow y$ in  $L^2(\RR^2, \CC,d\mu(x) d\nu)$. Immediately, we find that the function $y\rightarrow \norm{g_y}^2_{L^2(\RR^2, d\mu(x) d\nu)}$ is continuous on $\RR$. Then the claim follows directly from Lemma \ref{le:10} and Remark \ref{re:3}.

\medskip
\noindent\textbf{\emph{Proof of \eqref{for:96}}} Arguments in the previous part shows that without loss of generality, we just need to prove that $\lim_{\chi\rightarrow 0}D_{1,2}(\mathfrak{u}_{1,3}^2g)(\chi)=0$. Again, we will focus on the subrepresentation $(\pi\mid_{G_3},\,\mathcal{H})$.
Since $f$ is the solution of the cohomological equation $\mathfrak{u}_{1,2}f=g$, $f$ is smooth from \eqref{for:33} of Theorem \ref{th:6}. Then $\mathfrak{u}_{1,3}^2f$ satisfies the equation $\mathfrak{u}_{1,2}\mathfrak{u}_{1,3}^2f=\mathfrak{u}_{1,3}^2g$. Similar to the previous case, we write $\mathfrak{u}_{1,3}^2g=\int (\mathfrak{u}_{1,3}^2g)_ydy$ where $(\mathfrak{u}_{1,3}^2g)_y(x,t)=(\overline{\mathfrak{u}_{1,3}^2g})(x,y,t)$ and $(\overline{\mathfrak{u}_{1,3}^2g})\in L^2(\RR^3, \CC, d\mu(x)dyd\nu(t))$. Then \eqref{for:76} of Corollary \ref{cor:4} shows that for almost all $(x,t)\in\RR^2$ (with respect to the measure $d\mu(x)d\nu(t)$)
\begin{align}\label{for:8}
 \lim_{y\rightarrow 0}(\mathfrak{u}_{1,3}^2g)_y(x,t)=\lim_{y\rightarrow 0}(\overline{\mathfrak{u}_{1,3}^2g})_t(x,y)=0
\end{align}
By earlier arguments in \eqref{for:95}, we see that $(\mathfrak{u}_{1,3}^2g)_{y}(x,t)\rightarrow 0$ as $y\rightarrow 0$ in $L^2(\RR^2, \CC,d\mu(x) d\nu)$, which proves the claim.
\medskip

\noindent\textbf{\emph{Proof of \eqref{for:97}}}  Without loss of generality, we just need to prove the claim for the pair $\mathfrak{u}_{1,3}$ and $\mathfrak{u}_{2,4}$. We also assume notations in \eqref{for:96}.
Using subrepresentation $(\pi\mid_{G_3},\,\mathcal{H})$, the assumption means that for almost all $(x,t)\in\RR^2$ (with respect to the measure $d\mu(x)d\nu(t)$)
$\lim_{y\rightarrow 0}(\mathfrak{u}_{1,3}^2g)_y(x,t)=0$. By using \eqref{for:8} and \eqref{for:75} of Corollary \ref{cor:4} we see that the equation
$\mathfrak{u}_{1,2}f=\mathfrak{u}_{1,3}g$ has a solution $f\in \mathcal{H}$. Similarly, by using subrepresentation $(\pi\mid_{H_4},\,\mathcal{H})$, we get a solution
$k\in \mathcal{H}$ for the equation $\mathfrak{u}_{1,2}k=\mathfrak{u}_{2,4}g$. The smoothness of $k$ and $f$ follow from Theorem \ref{th:6}. Then
\begin{align*}
 \mathfrak{u}_{1,3}\mathfrak{u}_{1,2}k=\mathfrak{u}_{1,3}\mathfrak{u}_{2,4}g=\mathfrak{u}_{2,4}\mathfrak{u}_{1,3}g=\mathfrak{u}_{2,4}\mathfrak{u}_{1,2}f,
\end{align*}
which implies $k$ and $f$ satisfy the cocycle equation
\begin{align}\label{for:11}
\mathfrak{u}_{1,3}k=\mathfrak{u}_{2,4}f
\end{align}
thanks to Howe-moore. By Theorem \ref{th:8}, there exists $h\in \mathcal{H}^\infty$
satisfying
\begin{align*}
\mathfrak{u}_{2,4}h=k\quad\text{ and }\quad\mathfrak{u}_{1,3}h=f.
\end{align*}
Substituting the relations into \eqref{for:11}, we get the equation
\begin{align*}
\mathfrak{u}_{1,2}\mathfrak{u}_{2,4}h=\mathfrak{u}_{2,4}g.
\end{align*}
By Howe-moore again, we find that $\mathfrak{u}_{1,2}h=g$. Thus we finish the proof.
\section{Proof of Theorem \ref{th:10}}
The proof is standard and similar arguments appeared in \cite{Spatzier1}, \cite{Mieczkowski} and \cite{Ramirez}. Let $\beta$ be a cocycle over the $V$-action on $G/\Gamma$. Restricted to the $U$-action on $G/\Gamma$, $\beta$ is also a cocycle. Then it follows from the result in \eqref{for:79} that there is a smooth transfer function $p$ that satisfies
\begin{align*}
    \beta(u,x)=p(u\cdot x)+c(u)-p(x)
\end{align*}
for any $u\in U$ and $x\in G/\Gamma$, where $c:U\rightarrow \CC$ is a constant cocycle. For any $v\in V$, let
\begin{align*}
    \beta^*(v,x)=\beta(v,x)-p(v\cdot x)+p(x).
\end{align*}
Using the definition of cocycle, we see that $\beta^*$ is also a cocycle over $V$-action. Then
\begin{align*}
 \beta^*(v,x)&= \beta^*(uv,x)-\beta^*(u,v\cdot x)=\beta^*(vu,x)-\beta^*(u,v\cdot x)\\
 &=\beta^*(v,u\cdot x)+\beta^*(u,x)-\beta^*(u,v\cdot x)\\
 &=\beta^*(v,u\cdot x)
\end{align*}
is a $U$-invariant smooth function on $G/\Gamma$ for every $v\in V$. By Howe-moore, it is constant. Therefore, setting
$c'(v)=\beta(v,x)-p(v\cdot x)+p(x)$, we have shown that $p$ satisfies
\begin{align*}
\beta(v,x)-p(v\cdot x)+p(x)=c'(v)
\end{align*}
for all $v\in V$ and $x\in G/\Gamma$. It is clear that $c'=c$ on $U$.

\end{document}